\documentclass[journal]{IEEEtran}
\usepackage{amsmath,amsfonts}
\usepackage{algorithmic}
\usepackage{algorithm}
\usepackage{array}
\usepackage[caption=false,font=normalsize,labelfont=sf,textfont=sf]{subfig}
\usepackage{textcomp}
\usepackage{stfloats}
\usepackage{url}
\usepackage{color}
\usepackage{verbatim}
\usepackage{graphicx}
\usepackage{cite}

\newtheorem{theorem}{Theorem}[section]
\newtheorem{lemma}[theorem]{Lemma}

\newtheorem{corollary}[theorem]{Corollary}
\newtheorem{example}[theorem]{Example}
\newtheorem{remark}[theorem]{Remark}
\newenvironment{proof}{\noindent{\em Proof:}}{\quad \hfill$\Box$\vspace{2ex}}
\floatstyle{ruled}
\floatname{algorithm}{Algorithm}
\usepackage{booktabs}
\def \bR {\mathbb{R}}
\def \bN {\mathbb{N}}

\def \bx {{\boldsymbol x}}

\def \bb {{\boldsymbol b}}

\def \sign  {\,{\rm sign}\,}
\def \prox  {\,{\rm Prox}\,}

\hyphenation{op-tical net-works semi-conduc-tor IEEE-Xplore}

\begin{document}

\title{The Proximal Operator of the Piece-wise Exponential Function and Its Application in Compressed Sensing}

\author{Yulan Liu, Yuyang Zhou, and Rongrong Lin}

\markboth{ }%
{Shell \MakeLowercase{\textit{et al.}}: The Proximal Operator of the Piece-wise Exponential Function and Its Application in Compressed Sensing}


\maketitle

\begin{abstract}
This paper characterizes the proximal operator of the piece-wise exponential function $1\!-\!e^{-|x|/\sigma}$ with a given shape parameter $\sigma\!>\!0$, which is a popular nonconvex surrogate of $\ell_0$-norm in support vector machines, zero-one programming problems, and compressed sensing, etc. Although Malek-Mohammadi et al. [IEEE Transactions on Signal Processing, 64(21):5657--5671, 2016] once worked on this problem, the expressions they derived were regrettably  inaccurate. In a sense, it was lacking a case. Using the Lambert W function and an extensive study of the piece-wise exponential function, we have rectified the formulation of the proximal operator of the piece-wise exponential function in light of their work. We have also undertaken a thorough analysis of this operator. Finally, as an application in compressed sensing, an iterative shrinkage and thresholding algorithm (ISTA) for the piece-wise exponential function regularization problem is developed and fully investigated.
A comparative study of ISTA with nine popular non-convex penalties in compressed sensing demonstrates the advantage of the piece-wise exponential penalty.
\end{abstract}

\begin{IEEEkeywords}
Compressed sensing, Iterative shrinkage and thresholding algorithms, Lambert W function, Piece-wise exponential penalty, Proximal operator.
\end{IEEEkeywords}

\section{Introduction}\label{Introduction}

The notion of the proximal operator of a convex function, first introduced by Moreau in his seminal work \cite{Moreau65}, is kicking and alive. This fundamental regularization process gave birth five years later to the  so-called proximal  algorithm
by Martinet \cite{Martinet70}, followed by its extension in Rockafellar \cite{Rockafellar76}  for studying variational inequalities associated with maximal monotone inclusions. In the last three decades, it has been successfully applied to a wide variety of situations: convex optimization \cite{Donoho1992, Rockafellar76} and nonmonotone operators \cite{Attouch13,Bolte17,Combettes04}. Recently, solving large and huge-scale problems arising in a wide spectrum of disparate fundamental applications provide extra motivation to develop further the study of the proximal algorithm in a nonconvex and possibly nonsmooth setting. However, it is well known that evaluating the proximal operator of a function is basic and key important for proximal algorithms, which is expected to solve a specific optimization problem that is typically easier than the original problem \cite{Parikh2014}. 
This sparked interest in the characterization of some important nonconvex functions' proximal operators, which are frequently observed in sparse optimization  \cite{Fan2001,Gong2013,Xu2023,Zhang2010}.
Many widely known methods including the proximal gradient algorithm, alternating direction method of multipliers, iterative shrinkage and thresholding algorithm (ISTA), majorization-minimization and iteratively reweighed least squares fall into the proximal framework \cite{Beck2017,Polson2015}.

Sparse optimization problems arise in many fields of science and engineering, such as compressed sensing, image processing, statistics, and machine learning, etc. The so-called $\ell_0$-norm, which counted the nonzero components of a vector, is a natural penalty function to promote sparsity.  Numerous studies on $\ell_0$-norm penalty optimization problem have been widely investigated in the literature \cite{Blumensath2008,Fan2020,Shen2016,Wright2022}.
However, such a nonconvex problem is NP-hard \cite{Blumensath2008}. 
There are a number of $\ell_0$-norm  approximations listed in the literature \cite[Table 1]{Xu2023}.
The $\ell_1$-norm regularizer has received a great deal of attention because it is
  a continuous and convex surrogate of $\ell_0$-norm \cite[Theorem 2.11]{Wright2022}. Although it comes close to the $\ell_0$-norm, the $\ell_ 1$-norm frequently leads to problems with excessive punishment. To remedy this issue,
non-convex sparsity-inducing penalties have been employed to better approximate the $\ell_0$-norm and enhance sparsity, and hence have received considerable attention in sparse learning.
  Recent theoretical studies have shown their superiority to the convex counterparts in a variety of sparse learning settings, including the bridge $\ell_p$-norm penalty \cite{Foucart2009}, capped $\ell_1$ penalty (CaP) \cite{Bian2020,Jiang2015,Peleg2008,Zhang2010b}, transformed $\ell_1$ penalty (TL1) \cite{Zhang2017,Zhang2018}, log-sum penalty (Log) \cite{Candes2008}, minimax concave penalty (MCP) \cite{Zhang2010}, smoothly clipped absolute derivation (SCAD) \cite{Fan2001}, the difference of $\ell_1$ and $\ell_2$ norms  \cite{Lou2018,Yin2015}, the ratio of $\ell_1$ and $\ell_2$ norms  \cite{Tao2022,Yin2014} and piece-wise exponential (PiE) function in \cite{Bradley1998,Le2015,Mangasarian1996}.

In this paper, we focus on the PiE penalty function
\begin{equation}\label{PiE}
f_{\sigma}(x)=1-e^{-\frac{|x|}{\sigma}},\ x\in\bR
\end{equation}
where   $\sigma>0$ is  a given shape parameter, which is
also a continuous approximation of the $\ell_0$-norm in 
\cite[Equation (19)]{Mangasarian1996} and  \cite[Equation (7)]{Bradley1998}.  
The PiE is also called an exponential-type penalty (ETP or EXP) in some references \cite{Le2015,Xu2023}.
The parameter $\sigma$ determines the quality of the approximation:  the smaller value of $\sigma$, the closer behavior of to $\ell_0$-norm  (see, Fig. \ref{FigurePiE}). 
The PiE penalty was successfully applied in the support vector machines \cite{Fung2002}, zero-one programming problems \cite{Lucidi2010, Rinaldi2009}, image reconstruction \cite{Trzasko2008}, compressed sensing \cite{Chen2014,Le2015,Nguyen2015}, and the low-rank matrix completion \cite{Yan2022}, etc. 
Numerous numerical techniques are used to resolve the PiE penalized optimization.
For example,
Bradley \cite{Bradley1998,Bradley1998b} demonstrated how to minimize the PiE penalty for feature selection by solving a series of linear algorithms. 
However, this method is somewhat constrained for large problems. 
Difference of convex functions (DC) Algorithm was developed in \cite{Le2015,Nguyen2015} for the PiE penalty.
In their work, the DC decomposition of PiE function is $f_{\sigma}(x)=g(x)-h(x)$ with convex functions $g(x)=\frac{1}{\sigma} |x|$ and $h(x)=\frac{|x|}{\sigma}-1+e^{-\frac{|x|}{\sigma}}$.
According to numerical experiments in \cite[Table 7]{Le2015} for real datasets and in \cite[Figures 1-3]{Nguyen2015} for compressed sensing, the DC algorithm involving PiE penalty achieves very impressive numerical performance among the ones with PiE, $\ell_p$-norm, SCAD, CaP, and Log, etc.
PiE is always among the top performers in RIP and non-RIP categories alike.

To develop the proximal algorithm for the PiE penalty-based optimization problem, it is necessary to inscribe the proximal operator of PiE function.
As far as we know, Malek-Mohammadi et al. studied the issue \cite{Malek2016}. Unfortunately, they derive the incorrect  proximal operator of PiE \cite[Equation (25)]{Malek2016}. Example \ref{counterexample} provides a counterexample. Therefore, the main purpose of this paper is to characterize the proximal operator of PiE function and to provide its thorough analysis.

\begin{figure}[!tbp]
\begin{center}
\includegraphics[scale=0.6]{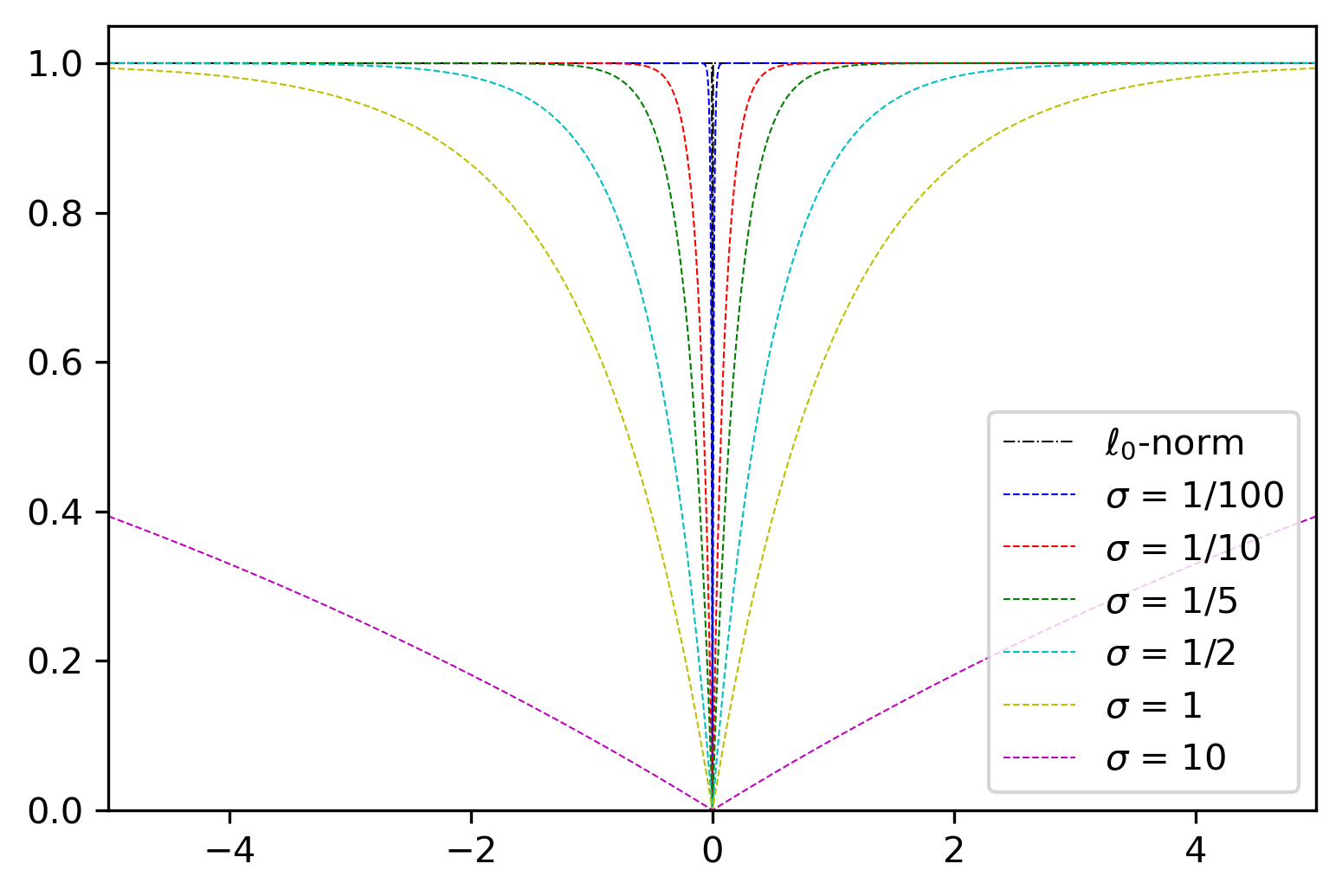}
\caption{The PiE with $\sigma=\{1/100,1/10,1/5,1/2,1,10\}$ and the $\ell_0$-norm.}
\label{FigurePiE}
\end{center}
\end{figure}

The remainder of the paper is organized as follows.
In Section \ref{SecPiEProx}, we first propose some properties of the proximal operator of PiE function. Based on these properties, its implicit expression is given. The proximal operator of the PiE function can be either continuous or discontinuous depending upon shape parameters $\sigma$ and the regularization
parameter (see, Theorems \ref{ProximalCase1} and \ref{Coratau}).  As an application in compressed sensing, numerical experiments are conducted and analyzed in Section \ref{Section:CS}. Subsection \ref{Subsection:ISTA} aims at developing ISTA with the PiE penalty function, as will be given in Algorithm \ref{AlgoISTA}.
The influence of three parameters (the regularization
parameter, the shape parameter, and the step size) in ISTA  with PiE penalty will be discussed in Subsection \ref{Subsection:Experiments}.
In Subsection \ref{Subsection:compare}, we will compare the reconstruction success rate and time
efficiency of ISTA  for  some  popular separable $\ell_0$-norm surrogate penalties. We conclude the paper in Section \ref{Section:Conclusion}.

  \section{Proximal operator of PiE}\label{SecPiEProx}
  The proximal operator with respect to a  function $f:\bR^n\!\to \!\bR$ is defined by for any $\bx_0\in \bR^n$
\begin{align}\label{ProxDef0}
\prox_{\mu\lambda f}(\bx_0):=\arg\min_{\bx\in\bR^n} \lambda f(\bx)\!+\!\frac{1}{2\mu}\|\bx-\bx_0\|^2, 
\end{align}
where $\mu>0$ and $\lambda>0$ are given parameters.
As mentioned above in Section \ref{Introduction}, characterization of $\prox_{\mu\lambda f}$ is crucial to the proximal algorithm. When $f$ is a  separate function,
by \cite[Theorem 6.6]{Beck2017}, it is sufficient to  deduce the characterization of $\prox_{\mu\lambda f}$ in \eqref{ProxDef0} when $n=1$, i.e.,  for any $x_0\in \bR$
\begin{align*}
\prox_{\mu\lambda f}(x_0):=\arg\min_{x\in\bR} \lambda f(x)+\frac{1}{2\mu} (x-x_0)^2.
\end{align*}
The proximal operator of some commonly used nonconvex separable $\ell_0$-norm surrogates, such as SCAD \cite{Fan2001}, MCP \cite{Zhang2010}, CaP \cite{Gong2013}, Log \cite{Prater2022}, TL1 \cite{Zhang2017} are well-known.

In this work, we study the proximal operator of PiE penalty function, defined as in \eqref{PiE}, i.e., for any $x_0\in \bR$
\begin{align}\label{Prox_fDef}
\prox^{(\sigma)}_{\mu\lambda  }(x_0)
:={\rm arg}\min\limits_{x\in \bR}\lambda (1-e^{-\frac{|x|}{\sigma}})+\frac{1}{2\mu}(x-x_0)^2.
\end{align}
Denote
\[
 L(x,x_0):=\lambda (1-e^{-\frac{|x|}{\sigma}})+\frac{1}{2\mu}(x-x_0)^2, \mbox{ for any } x\in \bR.
\]
Notice that $L(x,x_0)\!\to\!+\infty$ as $|x|\to \!+\!\infty$, the optimization problem in \eqref{Prox_fDef} has a nonempty optimal solution by the Weierstrass Theorem \cite[Theorem 2.14]{Beck2017}. Hence, 
the proximal operator $\prox^{(\sigma)}_{\mu\lambda  }$ is well defined.
Since  $L(x,-x_0)\!=\!L(-x,x_0)$ for every $x\!\in\!\bR$,
  the following   anti-symmetry  of  $\prox^{(\sigma)}_{\mu\lambda f}$ is easy to be checked along with \eqref{Prox_fDef}.

  \begin{lemma}\label{Antisym} 
  Let $\mu>0,\lambda>0$ and $\sigma>0$. For any $x_0\!\in\!\bR$,  $\prox^{(\sigma)}_{\mu\lambda  }(x_0)\!=\!-\prox^{(\sigma)}_{\mu\lambda }(-x_0)$. 
  \end{lemma}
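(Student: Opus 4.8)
The plan is to exploit the single symmetry identity already isolated in the excerpt, namely $L(x,-x_0)=L(-x,x_0)$ for every $x\in\bR$, and to convert it into a statement about minimizers by means of the change of variable $x\mapsto -x$. First I would restate the identity in its most convenient form: since $L(\cdot,\cdot)$ depends on $x$ only through $|x|$ and through $(x-x_0)^2$, both of which are invariant under the simultaneous sign flip $(x,x_0)\mapsto(-x,-x_0)$, one has $L(x,x_0)=L(-x,-x_0)$ for all $x,x_0\in\bR$; this is equivalent to the quoted relation $L(x,-x_0)=L(-x,x_0)$. Verifying it is a one-line substitution, and it is the only structural fact the proof needs.

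Next I would argue at the level of the argmin set. Let $y$ be any minimizer of $x\mapsto L(x,x_0)$, i.e. $y\in\prox^{(\sigma)}_{\mu\lambda}(x_0)$. For an arbitrary $x\in\bR$ I would chain the symmetry identity with the optimality of $y$: $L(x,-x_0)=L(-x,x_0)\ge L(y,x_0)=L(-y,-x_0)$, where the middle inequality is the defining minimality of $y$ for $L(\cdot,x_0)$, and the final equality is the same symmetry identity applied at the point $y$. Since $x$ was arbitrary, this shows that $-y$ minimizes $x\mapsto L(x,-x_0)$, that is $-y\in\prox^{(\sigma)}_{\mu\lambda}(-x_0)$, whence $y\in-\prox^{(\sigma)}_{\mu\lambda}(-x_0)$. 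Running the identical argument with the roles of $x_0$ and $-x_0$ interchanged yields the reverse inclusion, so the two sets coincide.

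One point I would be careful about is that $f_\sigma$ is nonconvex, so a priori $\prox^{(\sigma)}_{\mu\lambda}$ could be set-valued; the computation above is therefore most safely phrased as an equality of argmin sets, $\prox^{(\sigma)}_{\mu\lambda}(x_0)=-\prox^{(\sigma)}_{\mu\lambda}(-x_0)$, which specializes to the stated pointwise identity whenever the minimizer happens to be unique. There is no genuine obstacle here: the lemma is a pure symmetry statement whose entire content is the reflection $x\mapsto -x$ together with the evenness of $|x|$ and of the squared distance. The only thing worth double-checking is the sign bookkeeping in the step $L(y,x_0)=L(-y,-x_0)$, to ensure the reflection is applied to \emph{both} arguments simultaneously rather than to $x_0$ alone.
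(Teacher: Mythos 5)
Your proposal is correct and follows exactly the route the paper intends: it invokes the symmetry identity $L(x,-x_0)=L(-x,x_0)$ (which the paper states just before the lemma and declares the result ``easy to be checked'' from) and converts it into an equality of argmin sets via the reflection $x\mapsto -x$. Your explicit chain $L(x,-x_0)=L(-x,x_0)\ge L(y,x_0)=L(-y,-x_0)$ and the remark that the identity should be read set-valued (since the problem is nonconvex) simply supply the details the paper omits.
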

In what follows, let $\mu>0,\lambda>0$ and $\sigma>0$ be given.
By Lemma \ref{Antisym}, we know that
\begin{equation}\label{ProxTmulambdasigma}
\prox^{(\sigma)}_{\mu\lambda }(x_0)={\rm sign}(x_0)\mathcal{T}_{\mu\lambda}^{(\sigma)}(x_0),\  \mbox{for any }x_0\in \bR,
\end{equation}
 where
   \begin{align}\label{MainProb}
   \mathcal{T}_{\mu\lambda}^{(\sigma)}(x_0):={\rm arg}\min\limits_{x\geq 0}\Big\{ \lambda (1-e^{-\frac{x}{\sigma}})+\frac{1}{2\mu}(x-|x_0|)^2 \Big\}.
   \end{align} 
   So,  it is crucial to drive the expression of
   $\mathcal{T}_{\mu\lambda}^{(\sigma)}(x_0)$ in order to characterize $\prox^{(\sigma)}_{\mu\lambda }(x_0)$.
    For convenience,  for any $x_0\in\bR$, we denote 
   \begin{align}\label{LDef}
   \widehat{L}(x,x_0):=\lambda (1-e^{-\frac{x}{\sigma}})+\frac{1}{2\mu}(x-|x_0|)^2.
   \end{align}
    Before moving on, let us recall the Lambert W function which is a multi-valued inverse function with plenty of applications in areas like the theory of projectile motion, wave physics, cell growth models, solar wind, enzyme kinetics, plasma and quantum physics, general relativity, prey and parasite models, Twitter events, and so on \cite{Mezo2022}. The Lambert W function $W(x)$ is a set of solutions of the equation 
   $$
   x = W (x)e^{W(x)},\mbox{ for any } x\in [-\frac{1}{e},+\infty).
   $$
  The function $ W(x)$ is  single-valued for  $x\geq 0$ or $x=-\frac{1}{e}$, and is double-valued for
 $-\frac{1}{e}< x< 0 $ (see Fig. \ref{LambertW}). To discriminate between the two branches for $-\frac{1}{e}\leq x< 0 $, we use the same notation as in  \cite[Section 1.5]{Mezo2022}
 and denote the branch satisfying $W(x)\geq -1$ and $W(x)\leq -1$
 by $W_0(x)$ and $W_{-1}(x)$, respectively.
   Such a function is a built-in function in Python 
   (\url{https://docs.scipy.org/doc/scipy/reference/generated/scipy.special.lambertw.html}).
   We refer the readers to the recent monograph \cite[Section 1.5]{Mezo2022} on the Lambert W function to learn more details. 
  
  \begin{figure}[!tbp]
\begin{center}
\includegraphics[scale=0.6]{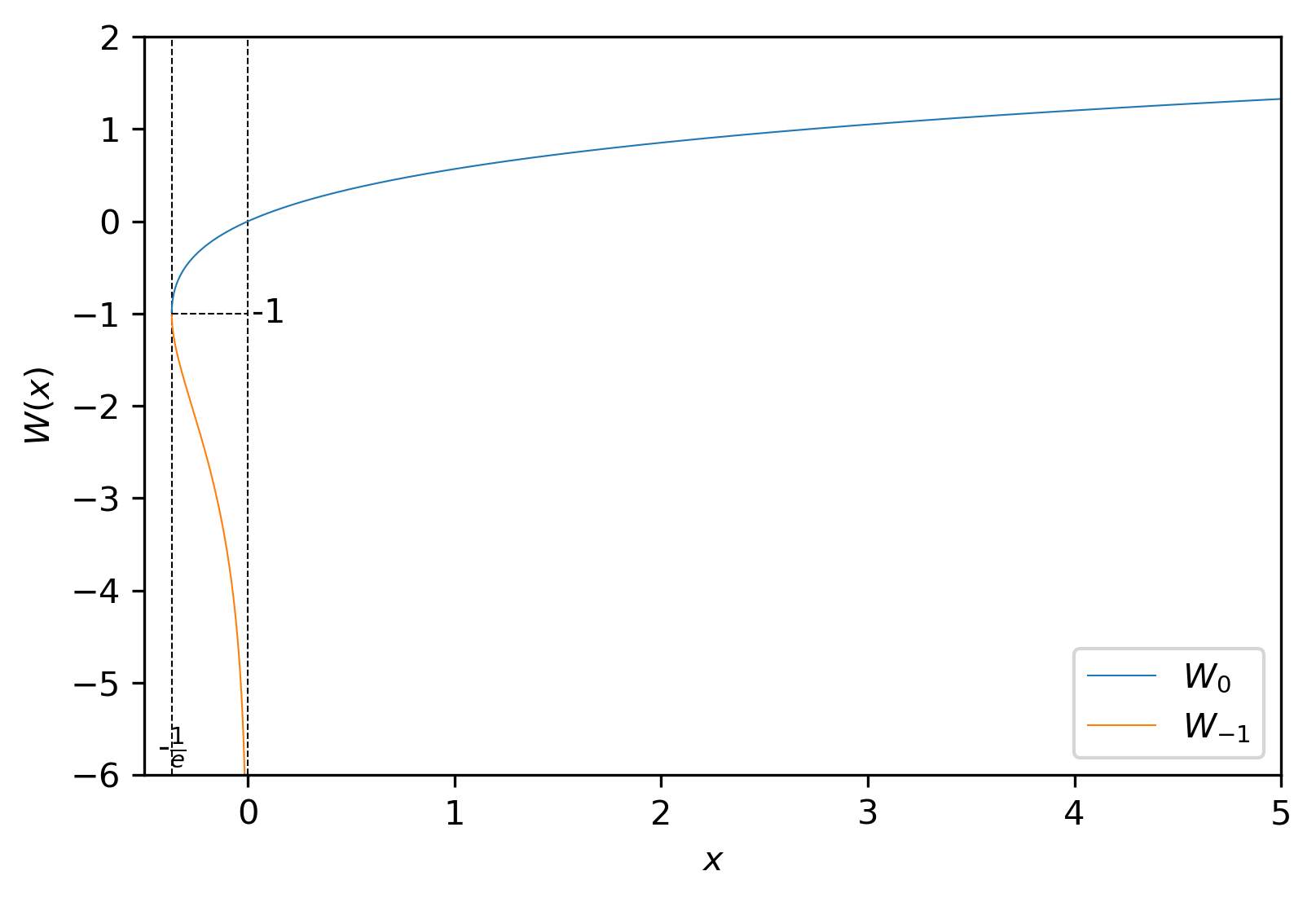}
\caption{Two main branches of the Lambert W function.}
\label{LambertW}
\end{center}
\end{figure}
   
   Malek-Mohammadi et al. \cite{Malek2016} proposed a new algorithm for the recovery of sparse vectors from underdetermined measurements based on  a successively accuracy increasing approximation of the $\ell_{0}$-norm for compressed sensing. As a by-product, the expression of $ \mathcal{T}_{\mu\lambda}^{(\sigma)}$ is derived in \cite[(25)]{Malek2016} with
   \begin{align}\label{ErrorFormula}
   \mathcal{T}_{\mu\lambda}^{(\sigma)}(x_0)=\left\{\begin{array}{cl}
    \{0\}, & \quad| x_{0}  | < \sigma (1+\ln( \frac{\mu \lambda}{\sigma ^{2}})),   \\ 
    \{0\}, & \quad\widehat{L}(x_{1},x_{0})>\widehat{L}(0,x_{0}), \\
     \{0,x_1\}, & \quad\widehat{L}(x_{1},x_{0})=\widehat{L}(0,x_{0}), \\
    \{x_1\}, &\quad {\rm otherwise},
    \end{array}\right.
   \end{align}
   where $x_1=\sigma W_{0}(- \frac{\mu \lambda}{\sigma ^{2}}e^{-\frac{|x_{0}|}{\sigma }})+| x_{0}|$ and $\widehat{L}(\cdot,x_0)$ defined as in \eqref{LDef}. 
   Unfortunately, the following example illuminate  this closed form expression of $\mathcal{T}_{\mu\lambda}^{(\sigma)}$ in \eqref{ErrorFormula} is inaccurate. 
  
   \begin{example}\label{counterexample}{\bf (Counter-example)}
   Set $\mu=\lambda=1$ and $\sigma = 2$. Taking $x_0 = \frac{1}{4}$.
   After simple calculation, we know  $x_1 \!=\! - 0.3438$, $ - 0.0113=\widehat{L}(x_1, x_0)< \widehat{L}(0, x_0) = \frac{1}{32}$ and $- 0.7726=\sigma (1+\ln( \frac{\mu \lambda}{\sigma ^{2}}))<|x_0|$. Therefore, 
   $\mathcal{T}_{\mu\lambda}^{(\sigma)}(x_0)=\{x_1\}=\{- 0.3438\}$ from  \eqref{ErrorFormula}, which 
   contracts to the constraint $x\geq 0$ in the optimization problem \eqref{MainProb}.
   \end{example}

\begin{remark}
In \cite{Malek2016}, the expression of $\mathcal{T}_{\mu\lambda}^{(\sigma)}$ in \eqref{ErrorFormula} is obtained only on the basis of the optimal necessity condition. However, the optimal optimization is a nonconvex problem. Hence, the related analysis in \cite[Page 13]{Malek2016} is not sufficient. 
  It is worth pointing out that equation \eqref{ErrorFormula} is correct when $\frac{\mu\lambda}{\sigma^2}>1$,  which is equivalent to  Theorem \ref{ProximalCase2} later. However,
   in \eqref{ErrorFormula}, one must compare the values $\widehat{L}(x_{1},x_{0})$ and $\widehat{L}(0,x_{0})$ every time to determine  $\mathcal{T}_{\mu\lambda}^{(\sigma)}(x_0)$ whenever $| x_{0}  |\ge \sigma (1+\ln( \frac{\mu \lambda}{\sigma ^{2}}))$, while in Theorem \ref{ProximalCase2} only when $\sigma(1+\ln\frac{\mu\lambda}{\sigma^2})\leq |x_0|\leq \frac{\mu\lambda}{\sigma}$ the values $\widehat{L}(x_{1},x_{0})$ and $\widehat{L}(0,x_{0})$ need to be compared, or only to find a threshold value $\bar{\tau}_{\mu\lambda,\sigma}$ according to Theorem \ref{Coratau}.
  When $\frac{\mu\lambda}{\sigma^2}\!\leq\! 1$, $\mathcal{T}_{\mu\lambda}^{(\sigma)}$ should be as in \eqref{Case1}, see Theorem \ref{ProximalCase1}.
\end{remark}

  \subsection{Expressions of the proximal operator for PiE}

  In this subsection, we will focus on building the correct expression 
  of $\mathcal{T}_{\mu\lambda}^{(\sigma)}$. To this end, we need some lemmas, which are needed later.

   \begin{lemma}
   	 For any $x_0\in \mathbb{R}$, it holds that $\mathcal{T}_{\mu\lambda}^{(\sigma)}(x_0)\neq \emptyset$.
   	\end{lemma}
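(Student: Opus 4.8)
The plan is to establish existence of a minimizer by invoking the Weierstrass extreme value theorem, exactly as was done for the unconstrained problem \eqref{Prox_fDef} in the text above, but now applied to the minimization over the closed half-line $[0,+\infty)$. First I would observe that the objective $\widehat{L}(\cdot,x_0)$ defined in \eqref{LDef} is continuous on $[0,+\infty)$, being the sum of the continuous map $x\mapsto\lambda(1-e^{-x/\sigma})$ and the continuous quadratic $x\mapsto\frac{1}{2\mu}(x-|x_0|)^2$.

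The key step is to verify coercivity on the feasible set. For $x\geq 0$ the penalty term satisfies $0\leq\lambda(1-e^{-x/\sigma})\leq\lambda$, so it is uniformly bounded, while the quadratic term $\frac{1}{2\mu}(x-|x_0|)^2\to+\infty$ as $x\to+\infty$; hence $\widehat{L}(x,x_0)\to+\infty$ as $x\to+\infty$. This lets me truncate the domain: I would pick any $M>|x_0|$ large enough that $\widehat{L}(x,x_0)>\widehat{L}(0,x_0)$ for all $x>M$, so that the infimum of $\widehat{L}(\cdot,x_0)$ over $[0,+\infty)$ coincides with its infimum over the compact interval $[0,M]$. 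A continuous function on the compact set $[0,M]$ attains its minimum by the Weierstrass theorem, and the resulting minimizer is feasible for \eqref{MainProb}; therefore $\mathcal{T}_{\mu\lambda}^{(\sigma)}(x_0)$, being the set of all such minimizers, is nonempty.

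I do not expect a genuine obstacle here: the result is simply the constrained analogue of the existence argument already stated for \eqref{Prox_fDef}, and the only points that require care are (i) that $[0,+\infty)$ is closed, so that the boundary value at $x=0$ is legitimately available and limits of feasible points remain feasible, and (ii) that the bounded exponential term cannot destroy the coercivity supplied by the quadratic. Both facts are immediate from the explicit form of $\widehat{L}$, so the lemma follows directly.
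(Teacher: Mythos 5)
Your proposal is correct and follows essentially the same route as the paper: the paper's proof simply notes that $\widehat{L}(x,x_0)\to+\infty$ as $x\to+\infty$ and invokes the Weierstrass theorem (in the coercive form of \cite[Theorem 2.14]{Beck2017}), which is exactly your coercivity-plus-compact-truncation argument written out in more detail.
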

   \begin{proof}
   Note that $\lim\limits_{x\!\to \!+\infty}\widehat{L}(x,x_0)\!=\!+\infty$. 
   $\mathcal{T}_{\mu\lambda}^{(\sigma)}(x_0)\!\neq\! \emptyset$ follows from
    the Weierstrass Theorem \cite[Theorem 2.14]{Beck2017}.
   \end{proof}
  
  \begin{lemma}\label{ThresholdLemma}
   For any $\mu>0, \lambda>0$ and $\sigma>0$, it holds 
    that $1+\ln(\frac{\mu\lambda}{\sigma^2})\leq\frac{\mu\lambda}{\sigma^2}$. 
   \end{lemma}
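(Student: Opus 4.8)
The plan is to reduce the inequality to a single-variable statement by the substitution $t := \frac{\mu\lambda}{\sigma^2}$, which ranges over all of $(0,+\infty)$ as $\mu,\lambda,\sigma$ vary over the positive reals. In these terms the claim becomes the classical elementary inequality $1 + \ln t \leq t$ for every $t > 0$, equivalently $\ln t \leq t - 1$. This is a well-known bound, so the work reduces to recording a clean self-contained derivation.

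To establish it I would introduce the auxiliary function $g(t) := t - 1 - \ln t$ on $(0,+\infty)$ and carry out a routine first-derivative analysis. Since $g'(t) = 1 - \frac{1}{t}$, the derivative vanishes only at $t = 1$, is negative on $(0,1)$ and positive on $(1,+\infty)$; hence $g$ attains its global minimum at $t = 1$. Evaluating gives $g(1) = 1 - 1 - \ln 1 = 0$, so $g(t) \geq 0$ for all $t > 0$, which is precisely $1 + \ln t \leq t$. Substituting back $t = \frac{\mu\lambda}{\sigma^2}$ then yields the stated inequality.

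An equivalent and slightly slicker route would invoke the concavity of the logarithm: the tangent line to $\ln$ at $t = 1$ is $y = t - 1$, since $\ln 1 = 0$ and the derivative of $\ln t$ at $t=1$ equals $1$, and a concave function lies weakly below each of its tangent lines, giving $\ln t \leq t - 1$ at once. Either way there is no genuine obstacle here; the only point worth flagging is that equality holds exactly when $t = 1$, i.e. when $\mu\lambda = \sigma^2$. This boundary case is presumably what governs the split between the regimes $\frac{\mu\lambda}{\sigma^2} > 1$ and $\frac{\mu\lambda}{\sigma^2} \leq 1$ handled separately in Theorems \ref{ProximalCase1} and \ref{ProximalCase2}, so I would state the equality condition explicitly even though the lemma only asserts the weak inequality.
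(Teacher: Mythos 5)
Your proposal is correct and follows essentially the same route as the paper: the authors also reduce to the scalar inequality $1+\ln t\leq t$ via $t=\frac{\mu\lambda}{\sigma^2}$ and analyze the auxiliary function $g(t)=1+\ln t-t$ (the negative of yours) by a first-derivative monotonicity argument with maximum at $t=1$. The tangent-line remark and the equality case $\mu\lambda=\sigma^2$ are pleasant additions but not part of the paper's proof.
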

   \begin{proof} Let $g(t)=1+\ln t-t$ for any $t\in \mathbb{R}$. Obviously, $g(1)=0$
   	and $g'(t)=\frac{1}{t}-1$. Therefore, $g(t)$ is increasing on the interval $(0,1)$ and decreasing on the interval $(1,\infty)$. Hence, $g(t)\!\leq\! g(1)\!=\!0$ for any $t\!>\!0$, which by taking $t=\frac{\mu\lambda}{\sigma^2}$ implies  $1+\ln(\frac{\mu\lambda}{\sigma^2})\leq\frac{\mu\lambda}{\sigma^2}$ .
   	\end{proof}

   \begin{lemma}\label{LemmaNecCondi}
   	Given $x_0\in \mathbb{R}$.
   	If $|x_0|> \frac{\mu\lambda}{\sigma}$, one has 
    \[
    \mathcal{T}_{\mu\lambda}^{(\sigma)}(x_0)=\{\sigma W_0(-\frac{\mu\lambda}{\sigma^2}e^{-\frac{|x_0|}{\sigma}})+|x_0|\}.
    \]
   \end{lemma}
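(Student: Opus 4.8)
The plan is to analyze the one–dimensional function $x\mapsto \widehat{L}(x,x_0)$ on $[0,+\infty)$ through its first two derivatives. Writing $\partial_x\widehat{L}(x,x_0)=\frac{\lambda}{\sigma}e^{-x/\sigma}+\frac{1}{\mu}(x-|x_0|)$, I first observe the key translation of the hypothesis: $\partial_x\widehat{L}(0,x_0)=\frac{\lambda}{\sigma}-\frac{|x_0|}{\mu}<0$ if and only if $|x_0|>\frac{\mu\lambda}{\sigma}$. Thus under our assumption $\widehat{L}(\cdot,x_0)$ is strictly decreasing at the boundary $x=0$, so $x=0$ cannot be a minimizer and every minimizer of \eqref{MainProb} lies in the open interval $(0,+\infty)$, where it is an interior stationary point.

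Next I would solve the stationarity equation $\partial_x\widehat{L}(x,x_0)=0$, i.e. $\frac{\mu\lambda}{\sigma}e^{-x/\sigma}=|x_0|-x$. Substituting $w=(x-|x_0|)/\sigma$ turns this into $we^{w}=-\frac{\mu\lambda}{\sigma^2}e^{-|x_0|/\sigma}=:z$, so that the critical points are exactly $x=|x_0|+\sigma W(z)$ for the real branches of the Lambert W function. To guarantee that $W(z)$ is real I must check $z\ge -\frac1e$; this is where Lemma \ref{ThresholdLemma} enters, since $z\ge -\frac1e$ is equivalent to $|x_0|\ge \sigma(1+\ln\frac{\mu\lambda}{\sigma^2})$, and the hypothesis together with $1+\ln\frac{\mu\lambda}{\sigma^2}\le \frac{\mu\lambda}{\sigma^2}$ yields $\frac{|x_0|}{\sigma}>\frac{\mu\lambda}{\sigma^2}\ge 1+\ln\frac{\mu\lambda}{\sigma^2}$.

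Finally I would pin down uniqueness and the correct branch. Since $\partial_{xx}\widehat{L}(x,x_0)=-\frac{\lambda}{\sigma^2}e^{-x/\sigma}+\frac{1}{\mu}$ is strictly increasing in $x$, the derivative $\partial_x\widehat{L}(\cdot,x_0)$ is either strictly increasing or first decreasing then increasing; combined with $\partial_x\widehat{L}(0,x_0)<0$ and $\partial_x\widehat{L}(x,x_0)\to+\infty$ this forces a unique zero $\hat{x}\in(0,+\infty)$, which is therefore the unique global minimizer. To decide which branch $\hat x$ corresponds to, I evaluate the second derivative at a stationary point: using $e^{-\hat x/\sigma}=-\frac{\sigma^2 w}{\mu\lambda}$ with $w=W(z)$ gives $\partial_{xx}\widehat{L}(\hat x,x_0)=\frac{1+W(z)}{\mu}$. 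As $\hat x$ is a minimizer this is $\ge 0$, forcing $W(z)\ge -1$, i.e. $W(z)=W_0(z)$. Hence $\mathcal{T}_{\mu\lambda}^{(\sigma)}(x_0)=\{\sigma W_0(z)+|x_0|\}$, which is automatically positive because $\hat x\in(0,+\infty)$. I expect the main obstacle to be the nonconvexity of $\widehat{L}$: the stationarity equation alone produces two candidate critical points (the two W–branches), so the crux is the second–derivative identity $\partial_{xx}\widehat{L}=\frac{1+W(z)}{\mu}$, which cleanly separates the local minimum ($W_0$) from the local maximum ($W_{-1}$).
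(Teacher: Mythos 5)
Your proof is correct, and while it shares the same skeleton as the paper's argument (rule out $x=0$, solve the stationarity equation via the Lambert W function, verify $z=-\frac{\mu\lambda}{\sigma^2}e^{-|x_0|/\sigma}\ge-\frac1e$ using Lemma \ref{ThresholdLemma}), it diverges at the decisive final step in a way worth noting. The paper rules out $x^*=0$ through the normal-cone necessary condition \eqref{NecCondi} and then, faced with the two candidates $x_1,x_2$ in \eqref{WSolution}, settles the branch by invoking the function-value comparison $\widehat{L}(x_1,x_0)\le\widehat{L}(x_2,x_0)$ cited from the arguments in \cite[Page 13]{Malek2016} --- the very source whose analysis the paper elsewhere criticizes as insufficient. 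You instead prove directly that $\widehat{L}'(\cdot,x_0)$ has a \emph{unique} zero on $(0,+\infty)$: since $\widehat{L}'''>0$, the derivative $\widehat{L}'$ is strictly convex, hence its negativity set on $(0,\infty)$ is an interval anchored at $0$ (because $\widehat{L}'(0_+,x_0)=\frac{\lambda}{\sigma}-\frac{|x_0|}{\mu}<0$ under the hypothesis), and $\widehat{L}'\to+\infty$ forces exactly one sign change; this single crossing from negative to positive is automatically the global minimizer on $[0,\infty)$. Your branch identification via the identity $\widehat{L}''(\hat x,x_0)=\frac{1+W(z)}{\mu}$ at a stationary point is a clean, self-contained computation that pins down $W_0$ without comparing objective values, and it correctly handles the degenerate case $W(z)=-1$ where the two branches coincide. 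The net effect is a proof that is more elementary and avoids leaning on the external reference; the paper's route, by contrast, reuses machinery (the normal cone, the $x_1$-versus-$x_2$ comparison) that it needs anyway for Theorem \ref{ProximalCase2}, so each choice has its economy. No gaps.
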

   \begin{proof}
   Take any point $x^*$ from  $\mathcal{T}_{\mu\lambda}^{(\sigma)}(x_0)$. 
   	Since $|x_0|> \frac{\mu\lambda}{\sigma}$, we have that $\frac{\lambda}{\sigma}-\frac{|x_0|}{\mu}<0$.
   Applying the optimal necessary condition \cite[Theorem 10.1]{Rockafellar2009} to the optimizaiton problem \eqref{MainProb}, we know that  $x^*$ satisfy the following inclusion:
  \begin{align}\label{NecCondi}
  0\in \frac{1}{\mu}(x^*-|x_0|)+\frac{\lambda}{\sigma}e^{-\frac{x^*}{\sigma}}+\mathcal{N}_{\mathbb{R}_+}(x^*).
 \end{align}
 where ${\mathcal{N}}_{\mathbb{R}_+}(x^*)$ is a norm cone to $\bR_{+}$ at $x^*$, i.e.,
 \begin{align}\label{NormCone}
     {\mathcal{N}}_{\mathbb{R}_+}(x^*)=\{v\in \bR\,|\, v(x-x^*)\leq 0, \mbox{ for any } x\in \bR_{+}\}.
 \end{align}
 Obviously, $\mathcal{N}_{\mathbb{R}_+}(0)=\mathbb{R}_-$.
 Hence, $x^*\neq 0$ from \eqref{NecCondi} and the fact $\frac{\lambda}{\sigma}-\frac{|x_0|}{\mu}<0$. So, $x^*>0$ and then
  $\mathcal{N}_{\mathbb{R}_+}(x^*)=\{0\}$ by \eqref{NormCone}.  Again from \eqref{NecCondi},
 it follows that
 \[
  \frac{1}{\mu}(x^*-|x_0|)+\frac{\lambda}{\sigma}e^{-\frac{x^*}{\sigma}}=0,
  \]
  namely,  
  $$
  \frac{x^*-|x_0|}{\sigma}e^{\frac{x^*-|x_0|}{\sigma}}=-\frac{\mu\lambda}{\sigma^2}e^{-\frac{|x_0|}{\sigma}}.
  $$
  By Lemma \ref{ThresholdLemma} and  $|x_0|> \frac{\mu\lambda}{\sigma}$, $|x_0|\geq \sigma (1+\ln(\frac{\mu\lambda}{\sigma^2}))$ and then
  $-\frac{\mu\lambda}{\sigma^2}e^{-\frac{|x_0|}{\sigma}}\in [-\frac{1}{e},0)$.
  Hence,  together with the property of Lambert W function in \cite{Corless1996}, 
  $x^*=x_1$ or $x^*=x_2$ where
   \begin{align}\label{WSolution}
   \left\{
   \begin{array}{l}
	x_1:=\sigma W_0(-\frac{\mu\lambda}{\sigma^2}e^{-\frac{|x_0|}{\sigma}})+|x_0|,\\
	x_2:=\sigma W_{-1}(-\frac{\mu\lambda}{\sigma^2}e^{-\frac{|x_0|}{\sigma}})+|x_0|.
   \end{array}
   \right.
   \end{align}
   From arguments made in \cite[Page 13]{Malek2016}, we know $\widehat{L}(x_1,x_0)\!\leq\! \widehat{L}(x_2,x_0)$. Thus, $x^*\!=\!x_1$.
   	\end{proof}

   \begin{lemma}\label{shrinkage} Let  $ x_0\neq 0$. Then  $\mathcal{T}_{\mu\lambda}^{(\sigma)}(x_0)\subseteq[0,|x_0|)$.

   \end{lemma}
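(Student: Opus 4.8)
The plan is to analyze the one-dimensional objective $\widehat{L}(\cdot,x_0)$ on the feasible set $[0,\infty)$ directly through its first derivative. Fix any $x^*\in\mathcal{T}_{\mu\lambda}^{(\sigma)}(x_0)$; such a point exists by the preceding lemma. The lower bound $x^*\ge 0$ is immediate from the constraint $x\ge 0$ in \eqref{MainProb}, so the entire content of the claim is the \emph{strict} upper bound $x^*<|x_0|$. Since $x_0\neq 0$ we have $|x_0|>0$, a fact I expect to invoke precisely to secure strictness. The derivative I will work with is
\[
\partial_x\widehat{L}(x,x_0)=\frac{\lambda}{\sigma}\,e^{-\frac{x}{\sigma}}+\frac{1}{\mu}(x-|x_0|),
\]
which is smooth in $x$.

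First I would rule out $x^*>|x_0|$. For every $x\ge|x_0|$ both summands above are nonnegative and the exponential term is strictly positive, so $\partial_x\widehat{L}(x,x_0)>0$ on $[|x_0|,\infty)$; hence $\widehat{L}(\cdot,x_0)$ is strictly increasing there. Consequently $\widehat{L}(|x_0|,x_0)<\widehat{L}(x,x_0)$ for every $x>|x_0|$, and since $|x_0|$ is itself feasible, no point strictly greater than $|x_0|$ can be a minimizer.

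The remaining — and only delicate — step is to exclude the boundary case $x^*=|x_0|$, and this is where the hypothesis $x_0\neq 0$ becomes essential. Evaluating the derivative at $|x_0|$ gives $\partial_x\widehat{L}(|x_0|,x_0)=\frac{\lambda}{\sigma}e^{-\frac{|x_0|}{\sigma}}>0$. By continuity of the derivative there is $\delta>0$ with $\partial_x\widehat{L}(x,x_0)>0$ on $(|x_0|-\delta,|x_0|]$, so $\widehat{L}(\cdot,x_0)$ is strictly increasing on that interval and $\widehat{L}(|x_0|-\tfrac{\delta}{2},x_0)<\widehat{L}(|x_0|,x_0)$. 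Because $|x_0|>0$, the point $|x_0|-\tfrac{\delta}{2}$ is feasible for $\delta$ small, contradicting the minimality of $x^*=|x_0|$. Combining the two steps yields $x^*\in[0,|x_0|)$, and since $x^*$ was arbitrary, the inclusion $\mathcal{T}_{\mu\lambda}^{(\sigma)}(x_0)\subseteq[0,|x_0|)$ follows. The main obstacle is precisely this strictness at the boundary: monotonicity alone delivers only $x^*\le|x_0|$, and upgrading to a strict inequality forces one to use the sign of the first-order term at $|x_0|$ together with the feasibility room provided by $|x_0|>0$.
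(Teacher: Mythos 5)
Your proof is correct and follows essentially the same route as the paper's: positivity of $\widehat{L}'(\cdot,x_0)$ beyond $|x_0|$, plus the strict positivity $\widehat{L}'(|x_0|,x_0)=\frac{\lambda}{\sigma}e^{-|x_0|/\sigma}>0$ propagated to a left neighborhood by continuity, which forces the minimizer strictly below $|x_0|$. Your explicit two-case split (first $x^*>|x_0|$, then $x^*=|x_0|$) is just a slightly more verbose packaging of the paper's single statement that $\widehat{L}$ is strictly increasing on $[|x_0|-\delta,+\infty)$.
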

\begin{proof}
Since
$
\widehat{L}'(x,x_0)\!=\!\frac{\lambda}{\sigma} e^{-\frac{x}{\sigma}}+\frac{x-|x_0|}{\mu},
$
for any $x\!>\!0$, it holds that $\widehat{L}'(x,x_0)>0$ when $x>|x_0|$.
Notice that $\widehat{L}'(|x_0|,x_0)=\frac{\lambda}{\sigma} e^{-\frac{|x_0|}{\sigma}}>0$. By the sign-preserving property of continuous functions $\widehat{L}'(x,x_0)$ at the point $|x_0|$, there exists a sufficient small $\delta>\!0\!$ such that $\widehat{L}'(x,x_0)\!>\!\frac{\widehat{L}'(|x_0|,x_0)}{2}\!>\!0$ for any $x\!\in\!(|x_0|-\delta,|x_0|+\delta)$.
Consequently,
$\widehat{L}$ is strictly increasing on $[|x_0|-\delta,+\infty)$ and then
\(
\mathcal{T}_{\mu\lambda}^{(\sigma)}(x_0)=\arg\min\limits_{0\leq x< |x_0|} \widehat{L}(x,x_0),
\)
which implies that $\mathcal{T}_{\mu\lambda}^{(\sigma)}(x_0)\subseteq [0,|x_0|)$.
\end{proof}

   To further characterize this operator $\mathcal{T}_{\mu\lambda}^{(\sigma)}$, we need some properties of the derivative function $\widehat{L}'(x,x_0)$ of the objective function $\widehat{L}(x,x_0)$ on the interval $[0,+\infty)$.

   \begin{lemma}\label{ftauderivative}  Let  $x_0\in \mathbb{R}$. Then, the following statements hold.
   	\begin{itemize}
   		\item[(i)]
   		the derivative function $\widehat{L}'(x,x_0)$ is strictly convex on $[0,+\infty)$ and $\widehat{L}''(x,x_0)$ is strictly monotone increase on $[0,+\infty)$.
   		
   		\item[(ii)] If $0<\frac{\mu\lambda}{\sigma^2}\leq 1$, then $\widehat{L}'(x,x_0)$ is strictly increasing on $[0,+\infty)$. Moreover, $\widehat{L}'(x,x_0)>\frac{\lambda}{\sigma}-\frac{|x_0|}{\mu}$ for any $x>0$.
   		\item[(iii)] If $\frac{\mu\lambda}{\sigma^2}>1$, then the derivative function $\widehat{L}'(x,x_0)$ is strictly decreasing on $(0,\sigma\ln\frac{\mu\lambda}{\sigma^2 }]$, and strictly increasing on $[\sigma\ln\frac{\mu\lambda}{\sigma^2 },+\infty)$. Moreover, $\widehat{L}'(x,x_0)\geq \frac{\sigma}{\mu}(1+\ln\frac{\mu\lambda}{\sigma^2})-\frac{|x_0|}{\mu}$ for any $x>0$.
   	\end{itemize}
   \end{lemma}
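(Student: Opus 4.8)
The plan is to reduce everything to the sign behaviour of the successive derivatives of $\widehat{L}(\cdot,x_0)$, so the first step is to write them down explicitly. From \eqref{LDef} one computes
\[
\widehat{L}'(x,x_0)=\frac{\lambda}{\sigma}e^{-\frac{x}{\sigma}}+\frac{x-|x_0|}{\mu},\quad
\widehat{L}''(x,x_0)=\frac{1}{\mu}-\frac{\lambda}{\sigma^2}e^{-\frac{x}{\sigma}},
\]
together with the crucial observation that drives the whole lemma,
\[
\widehat{L}'''(x,x_0)=\frac{\lambda}{\sigma^3}e^{-\frac{x}{\sigma}}>0\quad\text{for all }x\in[0,+\infty).
\]
Since $\widehat{L}'''$ is precisely the second derivative of $\widehat{L}'(\cdot,x_0)$, its positivity makes statement (i) immediate: $\widehat{L}'(\cdot,x_0)$ is strictly convex, and $\widehat{L}''(\cdot,x_0)$, having a strictly positive derivative, is strictly increasing on $[0,+\infty)$.

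For (ii) and (iii) I would study the sign of the strictly increasing function $\widehat{L}''(\cdot,x_0)$. Its value at the left endpoint is
\[
\widehat{L}''(0,x_0)=\frac{1}{\mu}\Big(1-\frac{\mu\lambda}{\sigma^2}\Big),
\]
whose sign is governed precisely by the dichotomy $\frac{\mu\lambda}{\sigma^2}\leq 1$ versus $\frac{\mu\lambda}{\sigma^2}>1$ appearing in the statement. In case (ii), when $\frac{\mu\lambda}{\sigma^2}\leq 1$, we have $\widehat{L}''(0,x_0)\geq 0$; the monotonicity from (i) then forces $\widehat{L}''(x,x_0)>0$ for every $x>0$, so $\widehat{L}'(\cdot,x_0)$ is strictly increasing on $[0,+\infty)$. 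The moreover-part follows by evaluating at the endpoint, since $\widehat{L}'(0,x_0)=\frac{\lambda}{\sigma}-\frac{|x_0|}{\mu}$ and strict monotonicity gives $\widehat{L}'(x,x_0)>\widehat{L}'(0,x_0)$ for $x>0$. In case (iii), when $\frac{\mu\lambda}{\sigma^2}>1$, we have $\widehat{L}''(0,x_0)<0$; solving $\widehat{L}''(x,x_0)=0$ yields the unique root $x^\ast=\sigma\ln\frac{\mu\lambda}{\sigma^2}>0$, and strict monotonicity of $\widehat{L}''$ shows it is negative on $[0,x^\ast)$ and positive on $(x^\ast,+\infty)$. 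This is exactly the claimed decrease on $(0,x^\ast]$ and increase on $[x^\ast,+\infty)$.

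For the sharp lower bound in (iii), I would note that $x^\ast$ is then the unique global minimiser of $\widehat{L}'(\cdot,x_0)$ on $[0,+\infty)$, so it suffices to evaluate $\widehat{L}'$ there. Using $e^{-x^\ast/\sigma}=\frac{\sigma^2}{\mu\lambda}$ the exponential term collapses to $\frac{\sigma}{\mu}$, and substituting $x^\ast=\sigma\ln\frac{\mu\lambda}{\sigma^2}$ gives
\[
\widehat{L}'(x^\ast,x_0)=\frac{\sigma}{\mu}\Big(1+\ln\frac{\mu\lambda}{\sigma^2}\Big)-\frac{|x_0|}{\mu},
\]
which is the asserted bound.

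Overall this is essentially a one-variable calculus argument and I do not expect a genuine obstacle; the only care needed is bookkeeping. The single place worth double-checking is the clean cancellation at $x^\ast$ in the moreover-part of (iii): one must invoke the defining identity $e^{-x^\ast/\sigma}=\sigma^2/(\mu\lambda)$ rather than expanding blindly, otherwise the closed form $\frac{\sigma}{\mu}(1+\ln\frac{\mu\lambda}{\sigma^2})$ is not apparent. It is also worth observing that, although the statements are phrased for all $x_0\in\bR$, none of the derivatives $\widehat{L}',\widehat{L}'',\widehat{L}'''$ depend on $x_0$ except through the additive constant $-|x_0|/\mu$ in $\widehat{L}'$, which is why the monotonicity conclusions are uniform in $x_0$ and only the additive lower bounds carry the $|x_0|$ dependence.
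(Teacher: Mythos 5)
Your proposal is correct and follows essentially the same route as the paper: compute $\widehat{L}'$, $\widehat{L}''$, $\widehat{L}'''$, deduce (i) from $\widehat{L}'''>0$, split (ii)/(iii) on the sign of $\widehat{L}''$ at $0$, and obtain the lower bounds by evaluating $\widehat{L}'$ at the left endpoint or at the minimiser $\sigma\ln\frac{\mu\lambda}{\sigma^2}$. The only cosmetic difference is that the paper phrases the endpoint values as one-sided limits $\widehat{L}''(0_+,x_0)$ and $\widehat{L}'(0_+,x_0)$, whereas you evaluate at $0$ directly; since $\widehat{L}(\cdot,x_0)$ is smooth in $x$ (the absolute value falls only on $x_0$), this is immaterial.
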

   \begin{proof} By the expression of $\widehat{L}(x,x_0)$ defined as in \eqref{LDef} , after simple computation, we know that for any $x>0$,
   	\begin{equation}\label{Derivatives}
   	\begin{array}{ll}
   	&\widehat{L}'(x,x_0)=\frac{\lambda}{\sigma} e^{-\frac{x}{\sigma}}+\frac{x-|x_0|}{\mu},\\
   	&\widehat{L}''(x,x_0)=-\frac{\lambda}{\sigma^2} e^{-\frac{x}{\sigma}}+\frac{1}{\mu},\\ &\widehat{L}'''(x,x_0)=\frac{\lambda}{\sigma^3}e^{-\frac{x}{\sigma}}.
   	\end{array}
   	\end{equation}
   	Obviously, the strictly monotone increasing of $\widehat{L}''(x,x_0)$ and
   	the strict convexity of
   	$\widehat{L}'(x,x_0)$  on $[0,+\infty)$  come from the fact that
   	$\widehat{L}'''(x,x_0)>0$ for any $x>0$.
   	Notice that $\widehat{L}''(0_+,x_0)\!:=\!\lim\limits_{x\downarrow 0}\widehat{L}''(x,x_0)=\frac{1}{\mu}-\frac{\lambda}{\sigma^2}$.
   	Consequently, when $0<\frac{\mu\lambda}{\sigma^2}\le 1$, $\widehat{L}''(x,x_0)> \widehat{L}''(0_+,x_0)\geq 0$ for any $x>0$,
   	which implies that $\widehat{L}'(x,x_0)$ is strictly increasing on $(0,+\infty)$. Thus, $\widehat{L}'(x,x_0)>\widehat{L}'(0_+,x_0)\!:=\!\lim\limits_{x\downarrow 0}\widehat{L}'(x,x_0)=\frac{\lambda}{\sigma}-\frac{|x_0|}{\mu}$ for any $x>0$. Statement (ii) is obtained.
   	
   	Finally, we shall prove statement (iii) with $\frac{\mu\lambda}{\sigma^2}>1$.
   	Clearly,  from the second equation in \eqref{Derivatives}, it follows that
   	$\widehat{L}''(x,x_0)=0$ if and only if $x=\sigma\ln\frac{\mu\lambda}{\sigma^2 }>0$. Moreover, $\widehat{L}''(x,x_0)>0$ ($\widehat{L}''(x,x_0)<0$) if and only if $x>\sigma\ln\frac{\mu\lambda}{\sigma^2 }$ ($0<x<\sigma\ln\frac{\mu\lambda}{\sigma^2 }$).
   	Hence, the derivative function $\widehat{L}'(x,x_0)$ is strictly decreasing on $(0,\sigma\ln\frac{\mu\lambda}{\sigma^2 }]$, and strictly increasing on $[\sigma\ln\frac{\mu\lambda}{\sigma^2 },+\infty)$.
   	By the monotonicity of $\widehat{L}'(x,x_0)$, we know that for any $x>0$,
   $$
   		\widehat{L}'(x,x_0)\ge \widehat{L}'(\sigma\ln\frac{\mu\lambda}{\sigma^2 },x_0)=\frac{\sigma}{\mu}(1+\ln\frac{\mu\lambda}{\sigma^2})-\frac{|x_0|}{\mu}.
   $$
   	which leads to the desired result.
   \end{proof}

   \begin{theorem}\label{ProximalCase1}
   	Let $\frac{\mu\lambda}{\sigma^2}\leq 1$ and $x_0\in \mathbb{R}$, it holds that
   	\begin{align}\label{Case1}
   		\mathcal{T}_{\mu\lambda}^{(\sigma)}(x_0)\!=\!\!\left\{\begin{array}{cl}
   			\{0\}, & |x_0|\leq \frac{\mu\lambda}{\sigma},\\
   		 \{	\sigma W_0(\!-\!\frac{\mu\lambda}{\sigma^2}e^{-\frac{|x_0|}{\sigma}})\!+\!|x_0|\},\!\!\! &\mbox{otherwise}.
   		\end{array}\right.
   	\end{align}
   \end{theorem}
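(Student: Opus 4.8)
The plan is to establish the two branches of \eqref{Case1} separately, splitting according to whether $|x_0|$ exceeds the threshold $\frac{\mu\lambda}{\sigma}$. The branch $|x_0|>\frac{\mu\lambda}{\sigma}$ (the ``otherwise'' case) requires no new work: it is exactly the conclusion of Lemma \ref{LemmaNecCondi}, which already identifies $\mathcal{T}_{\mu\lambda}^{(\sigma)}(x_0)$ as the singleton $\{\sigma W_0(-\frac{\mu\lambda}{\sigma^2}e^{-|x_0|/\sigma})+|x_0|\}$. So I would simply invoke that lemma and be done with this case.

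The substance of the theorem lies in the regime $|x_0|\le\frac{\mu\lambda}{\sigma}$, where I claim that the unique minimizer of $\widehat{L}(\cdot,x_0)$ over $[0,+\infty)$ is $x=0$. Here I would use the standing hypothesis $0<\frac{\mu\lambda}{\sigma^2}\le 1$ to apply Lemma \ref{ftauderivative}(ii), which gives $\widehat{L}'(x,x_0)>\frac{\lambda}{\sigma}-\frac{|x_0|}{\mu}$ for every $x>0$. Observing that $|x_0|\le\frac{\mu\lambda}{\sigma}$ is equivalent to $\frac{\lambda}{\sigma}-\frac{|x_0|}{\mu}\ge 0$, I would conclude $\widehat{L}'(x,x_0)>0$ for all $x>0$. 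Together with continuity of $\widehat{L}(\cdot,x_0)$ at the endpoint, this forces $\widehat{L}(\cdot,x_0)$ to be strictly increasing on the entire half-line $[0,+\infty)$, so its minimum over $x\ge 0$ is attained only at $x=0$, yielding $\mathcal{T}_{\mu\lambda}^{(\sigma)}(x_0)=\{0\}$.

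The one delicate point is the boundary $|x_0|=\frac{\mu\lambda}{\sigma}$: there the crude lower bound $\frac{\lambda}{\sigma}-\frac{|x_0|}{\mu}$ on the derivative equals exactly zero, so monotonicity of $\widehat{L}$ cannot be read off from that bound alone. This is precisely where the \emph{strict} inequality in Lemma \ref{ftauderivative}(ii), namely $\widehat{L}'(x,x_0)>\frac{\lambda}{\sigma}-\frac{|x_0|}{\mu}$ rather than merely $\ge$, is indispensable, since it still delivers $\widehat{L}'(x,x_0)>0$ on $(0,+\infty)$ and hence a genuine singleton minimizer instead of possibly a whole interval of minimizers. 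Apart from keeping track of this strictness, the argument is routine.
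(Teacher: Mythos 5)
Your proposal is correct and follows essentially the same route as the paper: the case $|x_0|>\frac{\mu\lambda}{\sigma}$ is dispatched by Lemma \ref{LemmaNecCondi}, and the case $|x_0|\le\frac{\mu\lambda}{\sigma}$ uses Lemma \ref{ftauderivative}(ii) to get $\widehat{L}'(x,x_0)>\frac{\lambda}{\sigma}-\frac{|x_0|}{\mu}\ge 0$ on $(0,+\infty)$, hence strict monotonicity and the singleton $\{0\}$. Your explicit remark about why the strict inequality matters at the boundary $|x_0|=\frac{\mu\lambda}{\sigma}$ is a nice touch that the paper leaves implicit.
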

   \begin{proof}
   	By  Lemma \ref{ftauderivative} (ii), $\widehat{L}'(x,x_0)$  is strictly increasing on $[0,+\infty)$ and
   	\(
   \widehat{L}'(x,x_0)>\widehat{L}'(0_+,x_0)=\frac{\lambda}{\sigma}-\frac{|x_0|}{\mu}
   	\) for any $x>0$.
   	Now we will divide the arguments into two cases:
   	
   	{\bf Case 1: $0<|x_0|\le \frac{\mu\lambda}{\sigma}$}. In this case, $\widehat{L}'(x,x_0)>\widehat{L}'(0_+,x_0)\geq 0$ for all $x\in(0,\infty)$,
   	and then  $\widehat{L}(x,x_0)$ is strictly increasing on $[0,\infty)$. Hence,
   	$
   	\mathcal{T}_{\mu\lambda}^{(\sigma)}(x_0)=\{0\}.
   	$
   	
   	{\bf Case 2: $|x_0|>\frac{\mu\lambda}{\sigma}$}.   	The desired result is obtained directly by Lemma \ref{LemmaNecCondi}.
   	
   \end{proof}

    \begin{theorem}\label{ProximalCase2}
   	  Let $\frac{\mu\lambda}{\sigma^2}> 1$ and $x_0\in \mathbb{R}$, it holds that
   	\begin{align}\label{Case2}
   		\mathcal{T}_{\mu\lambda}^{(\sigma)}(x_0)=\left\{\begin{array}{cl}
   			\{0\}, & |x_0|< \sigma(1+\ln\frac{\mu\lambda}{\sigma^2}),\\
   			\arg\min\limits_{x=0,x_1}\{\widehat{L}(x,x_0)\}, & \sigma(1+\ln\frac{\mu\lambda}{\sigma^2})\leq |x_0|\leq \frac{\mu\lambda}{\sigma},\\
   			\{x_1\}, & {\rm otherwise},
   		\end{array}\right.
   	\end{align}
   	where $x_1:=\sigma W_0(-\frac{\mu\lambda}{\sigma^2}e^{-\frac{|x_0|}{\sigma}})+|x_0|$.
   \end{theorem}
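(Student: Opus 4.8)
The plan is to read off the exact sign pattern of $\widehat{L}'(\cdot,x_0)$ from Lemma \ref{ftauderivative}(iii), which in the present regime $\frac{\mu\lambda}{\sigma^2}>1$ states that $\widehat{L}'(\cdot,x_0)$ is strictly decreasing on $(0,x_m]$ and strictly increasing on $[x_m,+\infty)$, where $x_m:=\sigma\ln\frac{\mu\lambda}{\sigma^2}>0$ is its unique minimizer, with minimal value $\widehat{L}'(x_m,x_0)=\frac{\sigma}{\mu}(1+\ln\frac{\mu\lambda}{\sigma^2})-\frac{|x_0|}{\mu}$. The whole argument is then organized by comparing $|x_0|$ with the two thresholds $\sigma(1+\ln\frac{\mu\lambda}{\sigma^2})$ and $\frac{\mu\lambda}{\sigma}$, which by Lemma \ref{ThresholdLemma} are correctly ordered as $\sigma(1+\ln\frac{\mu\lambda}{\sigma^2})\le\frac{\mu\lambda}{\sigma}$, matching the three branches of \eqref{Case2}.

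The two outer branches are immediate. When $|x_0|<\sigma(1+\ln\frac{\mu\lambda}{\sigma^2})$ the minimal value $\widehat{L}'(x_m,x_0)$ is strictly positive, so $\widehat{L}'(x,x_0)>0$ for all $x>0$; hence $\widehat{L}(\cdot,x_0)$ is strictly increasing on $[0,+\infty)$ and its only minimizer is $0$. When $|x_0|>\frac{\mu\lambda}{\sigma}$ we are precisely under the hypothesis of Lemma \ref{LemmaNecCondi}, which gives $\mathcal{T}_{\mu\lambda}^{(\sigma)}(x_0)=\{x_1\}$ at once.

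The middle branch $\sigma(1+\ln\frac{\mu\lambda}{\sigma^2})\le|x_0|\le\frac{\mu\lambda}{\sigma}$ is the crux. Here $x_0\neq0$ (the left threshold is positive), so Lemma \ref{shrinkage} confines the analysis to $[0,|x_0|)$. The two endpoint inequalities become $\widehat{L}'(0_+,x_0)=\frac{\lambda}{\sigma}-\frac{|x_0|}{\mu}\ge0$ and $\widehat{L}'(x_m,x_0)\le0$, while $\widehat{L}'(x,x_0)\to+\infty$ as $x\to+\infty$. Together with the decreasing/increasing profile of $\widehat{L}'$, this forces $\widehat{L}'(\cdot,x_0)$ to have exactly one zero $x_2\in(0,x_m]$ (sign change $+\!\to\!-$) and one zero $x_1\in[x_m,+\infty)$ (sign change $-\!\to\!+$), with $x_2\le x_m\le x_1$. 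Since $|x_0|\ge\sigma(1+\ln\frac{\mu\lambda}{\sigma^2})$ places $-\frac{\mu\lambda}{\sigma^2}e^{-|x_0|/\sigma}$ in $[-\frac{1}{e},0)$, the Lambert-W computation already carried out in Lemma \ref{LemmaNecCondi} identifies these zeros as the two roots in \eqref{WSolution}, the $W_0$-root being $x_1$ and the $W_{-1}$-root being $x_2$. Consequently $\widehat{L}(\cdot,x_0)$ increases on $[0,x_2]$, decreases on $[x_2,x_1]$, and increases on $[x_1,+\infty)$, so $x_2$ is a local maximizer and the only candidates for the global minimizer on $[0,+\infty)$ are the endpoint $0$ and the interior local minimizer $x_1$. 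This yields $\mathcal{T}_{\mu\lambda}^{(\sigma)}(x_0)=\arg\min_{x\in\{0,x_1\}}\widehat{L}(x,x_0)$, exactly the middle line of \eqref{Case2}.

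The step demanding the most care is this middle branch: one must verify that the two interior critical points sit on the correct Lambert-W branches and in the correct order (so that $x_1$, not $x_2$, is the local minimizer), and that the compact formula $\arg\min_{x=0,x_1}$ still returns the right set at the degenerate endpoints, namely $|x_0|=\sigma(1+\ln\frac{\mu\lambda}{\sigma^2})$ (where $x_1=x_2=x_m$ and $\widehat{L}$ is in fact strictly increasing, giving $\{0\}$) and $|x_0|=\frac{\mu\lambda}{\sigma}$ (where $x_2$ collapses onto the boundary $0$, giving $\{x_1\}$). Once the monotonicity profile is established, reducing the minimization to comparing $\widehat{L}(0,x_0)$ with $\widehat{L}(x_1,x_0)$ is routine.
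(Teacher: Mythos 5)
Your proposal is correct and follows essentially the same route as the paper's proof: both split on the thresholds $\sigma(1+\ln\frac{\mu\lambda}{\sigma^2})$ and $\frac{\mu\lambda}{\sigma}$, use Lemma \ref{ftauderivative}(iii) to control the sign of $\widehat{L}'$ in the outer cases, invoke Lemma \ref{LemmaNecCondi} for $|x_0|>\frac{\mu\lambda}{\sigma}$, and in the middle case locate the two critical points via the Lambert W branches to reduce the problem to comparing $\widehat{L}(0,x_0)$ with $\widehat{L}(x_1,x_0)$. The only (harmless) difference is that you derive the existence and ordering of the two zeros from the decreasing-then-increasing profile of $\widehat{L}'$ rather than from its strict convexity, and you spell out the degenerate endpoints more explicitly than the paper does.
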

  \begin{proof}
  	By  Lemma \ref{ftauderivative} (iii),  it holds that for any $x>0$
  	\begin{align}\label{LderTemp1}
  	\widehat{L}'(x,x_0)\ge \frac{1}{\mu}\Big(\sigma(1+\ln\frac{\mu\lambda}{\sigma^2})-|x_0|\Big).
  	\end{align}
  	We will divide the arguments into two cases:
  	
  	{\bf Case 1: $0< |x_0|<\sigma(1+\ln\frac{\mu\lambda}{\sigma^2})$}. In this case, $\widehat{L}'(x,x_0)>0$ for any $x\in(0,|x_0|)$ by \eqref{LderTemp1}. Hence, $\widehat{L}$ is strictly increasing on $[0,|x_0|]$, which  with  Lemma \ref{shrinkage} implies
  	$\mathcal{T}_{\mu\lambda}^{(\sigma)}(x_0)=\{0\}$

  	{\bf Case 2: $\sigma(1+\ln\frac{\mu\lambda}{\sigma^2})\leq |x_0|\leq \frac{\mu\lambda}{\sigma}$}.
  	In this case, $\widehat{L}'(0_+,x_0)\!:=\!\lim\limits_{x\downarrow 0}\widehat{L}'(x,x_0)=\frac{\lambda}{\sigma}-\frac{|x_0|}{\mu}\geq 0$ and
  	\[
  	\widehat{L}'(\sigma\ln\frac{\mu\lambda}{\sigma^2},x_0 )=\frac{1}{\mu}\big(\sigma(1+\ln\frac{\mu\lambda}{\sigma^2})-|x_0|\big)\leq 0.
  	\]
  	 Remember that $\widehat{L}'(|x_0|,x_0)=\frac{\lambda}{\sigma} e^{-\frac{|x_0|}{\sigma}}\!>\!0$.
  	By the strict convexity of $\widehat{L}'(x,x_0)$ from Lemma \ref{ftauderivative} (i),
  	there exist only two points $d_{x_0}\!<\!c_{x_0}$  such that $0\leq d_{x_0}<\sigma\ln\frac{\mu\lambda}{\sigma^2}\leq c_{x_0}<|x_0|$ and
  $\widehat{L}'(d_{x_0},x_0)=\widehat{L}'(c_{x_0},x_0)=0$. Thus, $\widehat{L}$ is increasing on $[0,d_{x_0}]$ or $[c_{x_0},|x_0|)$, and decreasing on $[d_{x_0},c_{x_0}]$. Hence,
  along with Lemma \ref{shrinkage},
  $\mathcal{T}_{\mu\lambda}^{(\sigma)}(x_0)=\arg\min\limits_{x=0,c_{x_0}}\widehat{L}(x,x_0)$. On other hand, since $\widehat{L}'(d_{x_0},x_0)=\widehat{L}'(c_{x_0},x_0)=0$,
  $d_{x_0},c_{x_0}$ are  solutions to the following equation
  \[
  \widehat{L}'(x,x_0)=\frac{1}{\mu}(x-|x_0|)+\frac{\lambda}{\sigma}e^{-\frac{x}{\sigma}}=0,
  \]
  i.e.,  \[
  \frac{x-|x_0|}{\sigma}e^{\frac{x-|x_0|}{\sigma}}=-\frac{\mu\lambda}{\sigma^2}e^{-\frac{|x_0|}{\sigma}}.
  \]
  From $|x_0|\geq \sigma (1+\ln(\frac{\mu\lambda}{\sigma^2}))$, it follows that
  \[
  -\frac{\mu\lambda}{\sigma^2}e^{-\frac{|x_0|}{\sigma}}\in [-\frac{1}{e},0).
  \]
  By the property of Lambert W function in \cite{Mezo2022}, we know that
  \begin{align*}
  	\left\{
  	\begin{array}{l}
  		c_{x_0}=x_1:=\sigma W_0(-\frac{\mu\lambda}{\sigma^2}e^{-\frac{|x_0|}{\sigma}})+|x_0|,\\
  		d_{x_0}=\sigma W_{-1}(-\frac{\mu\lambda}{\sigma^2}e^{-\frac{|x_0|}{\sigma}})+|x_0|.
  	\end{array}
  	\right.
  \end{align*}
  Thus,
  $\mathcal{T}_{\mu\lambda}^{(\sigma)}(x_0)=\arg\min\limits_{x=0,c_{x_0}}\widehat{L}(x,x_0)=\arg\min\limits_{x=0,x_1}\widehat{L}(x,x_0)$.
  		
  		{\bf Case 3: $|x_0|>\frac{\mu\lambda}{\sigma}$}. The desired result is obtained directly from Lemma \ref{LemmaNecCondi}.
  \end{proof}
  \begin{remark}
   The equation \eqref{Case2} is more detailed than the equation \eqref{ErrorFormula}, although the two equations are actually equivalent when $\frac{\mu\lambda}{\sigma^2}>1$. 
   Observe that $\mathcal{T}_{\mu\lambda}^{(\sigma)}(x_0)$ in \eqref{ErrorFormula} must be determined  when $|x_0|\ge\sigma(1+\ln\frac{\mu\lambda}{\sigma^2})$. By contrast, $\mathcal{T}_{\mu\lambda}^{(\sigma)}(x_0)$ in \eqref{Case2} must be identified only for $\sigma(1+\ln\frac{\mu\lambda}{\sigma^2})\leq |x_0|\leq \frac{\mu\lambda}{\sigma}$.
It is obvious that \eqref{Case2} takes less time to compute than \eqref{ErrorFormula}. 

  \end{remark}

  \subsection{Further discussion on $\mathcal{T}_{\mu\lambda}^{(\sigma)}$}
   The paper \cite{Malek2016} pointed that
  $|x_0|\geq \sigma(1+\ln\frac{\mu\lambda}{\sigma^2})$, it is very hard to analytically ﬁnd the condition under which $x = x_1$ or $x = 0$ is the minimizer.
  In fact, when $\frac{\mu\lambda}{\sigma^2}\leq 1$, from Theorem \ref{ProximalCase1} it follows that 
  $\widehat{L}(0,x_0)<\widehat{L}(x_1,x_0)$ for each $\sigma(1+\ln\frac{\mu\lambda}{\sigma^2})\leq |x_0|<\frac{\mu\lambda}{\sigma}$, otherwise $\widehat{L}(x_1,x_0)< \widehat{L}(0,x_0)$.
  When $\frac{\mu\lambda}{\sigma^2}>1$, from Theorem \ref{ProximalCase2} it follows that
  $\widehat{L}(x_1,x_0)<\widehat{L}(0,x_0)$
  for each $|x_0|>\frac{\mu\lambda}{\sigma}$. Moreover,  when $\frac{\mu\lambda}{\sigma^2}>1$,  we will extend the interval, see Theorem \ref{ProxTheoCase2Refined}  later, i.e.,
   $\widehat{L}(x_1,x_0)<\widehat{L}(0,x_0)$ for each $|x_0|>\min\{\frac{\mu\lambda}{\sigma},\sqrt{2\mu\lambda}\}$, and we will find a threshold value $\bar{\tau}_{\mu\lambda, \sigma}$ such that $\mathcal{T}_{\mu\lambda}^{(\sigma)}(x_0)=\{0\}$ when $|x_0|<{\bar{\tau}}_{\mu\lambda, \sigma}$ and $\mathcal{T}_{\mu\lambda}^{(\sigma)}(x_0)=\{x_1\}$ when $|x_0|>\bar{\tau}_{\mu\lambda,\sigma}$, see Theorem \ref{Coratau} later. 
  To this end, the following lemmas are key important.

\begin{lemma}\label{orderprox}(\cite[Lemma 3]{Prater2022})  Let $0\le |\tau^1|<|\tau^2|$. Then for each $\alpha\in\mathcal{T}_{\mu\lambda}^{(\sigma)}(\tau^1)$ and each $\beta\in\mathcal{T}_{\mu\lambda}^{(\sigma)}(\tau^2)$, it holds that $0\le \alpha\le \beta$.
\end{lemma}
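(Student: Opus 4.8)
The plan is to establish the two inequalities $0\le\alpha$ and $\alpha\le\beta$ separately. The first is immediate, and the second rests on a standard exchange argument that exploits the very special way in which $\widehat{L}$ depends on its second argument.

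First I would reduce to nonnegative data. Since the objective $\widehat{L}(x,x_0)$ defined in \eqref{LDef} depends on $x_0$ only through $|x_0|$, I would set $t_1:=|\tau^1|$ and $t_2:=|\tau^2|$, so that $0\le t_1<t_2$ and $\mathcal{T}_{\mu\lambda}^{(\sigma)}(\tau^i)=\arg\min_{x\ge 0}\widehat{L}(x,t_i)$ by \eqref{MainProb}. The nonnegativity $0\le\alpha$ (and likewise $0\le\beta$) is then nothing more than the constraint $x\ge 0$ in the definition of $\mathcal{T}_{\mu\lambda}^{(\sigma)}$.

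For $\alpha\le\beta$ I would run an exchange argument. Because $\beta\ge 0$ is admissible in the problem defining $\alpha$, and $\alpha\ge 0$ is admissible in the problem defining $\beta$, optimality yields
$$\widehat{L}(\alpha,t_1)\le\widehat{L}(\beta,t_1),\qquad \widehat{L}(\beta,t_2)\le\widehat{L}(\alpha,t_2).$$
Adding these two inequalities and regrouping, the nonconvex term $\lambda(1-e^{-x/\sigma})$ appears once with each sign and cancels, leaving only the quadratic contribution. Using the identity $(x-t_1)^2-(x-t_2)^2=(t_2-t_1)(2x-t_1-t_2)$ at $x=\alpha$ and $x=\beta$, the summed inequality collapses to
$$\frac{(t_2-t_1)(\alpha-\beta)}{\mu}\le 0.$$
Since $t_2-t_1>0$ and $\mu>0$, this forces $\alpha-\beta\le 0$, i.e. $\alpha\le\beta$, which together with $0\le\alpha$ completes the argument.

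The hard part, conceptually, is that $\widehat{L}(\cdot,x_0)$ is nonconvex, so one cannot simply invoke the firm nonexpansiveness or order-preserving property of proximal operators that is available in the convex setting. The structural feature that rescues the proof — and the point I would emphasize — is that the difference $\widehat{L}(x,t_1)-\widehat{L}(x,t_2)$ is \emph{affine} in $x$: the exponential part does not involve $x_0$ and therefore drops out entirely, so the exchange inequality produces a clean linear relation in $\alpha-\beta$ with no residual nonconvex remainder. This is exactly why monotonicity persists despite the nonconvexity of the penalty, and it makes the whole case analysis of Theorems \ref{ProximalCase1} and \ref{ProximalCase2} unnecessary for this particular claim.
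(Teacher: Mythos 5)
Your argument is correct. Note, however, that the paper does not prove this lemma at all: it is imported verbatim as \cite[Lemma 3]{Prater2022}, so there is no internal proof to compare against. Your proposal therefore adds something the paper omits, namely a short self-contained justification. The two halves are both sound: $0\le\alpha$ and $0\le\beta$ are indeed just the feasibility constraint in \eqref{MainProb}, and the exchange argument is the standard Topkis-type computation. The one place where a careless reading could go wrong is the passage from the summed optimality inequalities to the pointwise conclusion $\alpha\le\beta$ for \emph{every} pair of minimizers (in general one only gets the strong set order on the argmin sets); your proof survives this because the cross-difference $\widehat{L}(x,t_1)-\widehat{L}(x,t_2)=\frac{1}{2\mu}\bigl((x-t_1)^2-(x-t_2)^2\bigr)$ is affine in $x$ with strictly positive slope $(t_2-t_1)/\mu$, so the inequality $\frac{(t_2-t_1)(\alpha-\beta)}{\mu}\le 0$ forces $\alpha\le\beta$ outright rather than merely ordering the sets. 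Your closing observation --- that the exponential part of $\widehat{L}$ is independent of $x_0$ and hence cancels, which is why monotonicity of the proximal map survives the nonconvexity of the penalty --- is exactly the right structural point, and it shows the lemma holds for any separable penalty of the form $g(x)+\frac{1}{2\mu}(x-|x_0|)^2$ regardless of the shape of $g$, making the case analysis of Theorems \ref{ProximalCase1} and \ref{ProximalCase2} irrelevant here.
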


\begin{lemma}\label{threshold}
Let  $\frac{\mu\lambda}{\sigma^2}> 1$. If there exists 
some point $\bar{\tau}_{\mu\lambda, \sigma}$ with $\sigma(1+\ln\frac{\mu\lambda}{\sigma^2})\leq \bar{\tau}_{\mu\lambda, \sigma}\le \frac{\mu\lambda}{\sigma}$  such that $\mathcal{T}_{\mu\lambda}^{(\sigma)}(\bar{\tau}_{\mu\lambda, \sigma})$   contains $0$ and a positive number, then  $\mathcal{T}_{\mu\lambda}^{(\sigma)}(x_0)=\{0\}$ for any $0<|x_0|<\bar{\tau}_{\mu\lambda, \sigma}$ and $\mathcal{T}_{\mu\lambda}^{(\sigma)}(x_0)=\{\sigma W_0(-\frac{\mu\lambda}{\sigma^2}e^{-\frac{|x_0|}{\sigma}})+|x_0|\}$ for any $|x_0|>\bar{\tau}_{\mu\lambda, \sigma}$.
\end{lemma}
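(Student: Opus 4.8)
The plan is to leverage the monotonicity of the proximal operator established in Lemma~\ref{orderprox} together with the structural characterization in Theorem~\ref{ProximalCase2}. Write $\bar\tau:=\bar\tau_{\mu\lambda,\sigma}$ for brevity and note first that the hypothesis $\frac{\mu\lambda}{\sigma^2}>1$ forces $\ln\frac{\mu\lambda}{\sigma^2}>0$, so that $\bar\tau\ge\sigma(1+\ln\frac{\mu\lambda}{\sigma^2})>\sigma>0$; in particular $|\bar\tau|=\bar\tau$. By hypothesis there is a number $p>0$ with $\{0,p\}\subseteq\mathcal{T}_{\mu\lambda}^{(\sigma)}(\bar\tau)$, and these two known elements drive the whole argument.

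For the lower range $0<|x_0|<\bar\tau$, I would apply Lemma~\ref{orderprox} with $\tau^1=x_0$ and $\tau^2=\bar\tau$, which is legitimate since $0\le|x_0|<\bar\tau=|\bar\tau|$. Choosing the element $\beta=0\in\mathcal{T}_{\mu\lambda}^{(\sigma)}(\bar\tau)$, the lemma yields $0\le\alpha\le 0$ for every $\alpha\in\mathcal{T}_{\mu\lambda}^{(\sigma)}(x_0)$, whence $\alpha=0$. Therefore $\mathcal{T}_{\mu\lambda}^{(\sigma)}(x_0)=\{0\}$.

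For the upper range $|x_0|>\bar\tau$, I would again invoke Lemma~\ref{orderprox}, this time with $\tau^1=\bar\tau$ and $\tau^2=x_0$. Selecting the positive element $\alpha=p\in\mathcal{T}_{\mu\lambda}^{(\sigma)}(\bar\tau)$ gives $p\le\beta$ for every $\beta\in\mathcal{T}_{\mu\lambda}^{(\sigma)}(x_0)$; since $p>0$, this shows $0\notin\mathcal{T}_{\mu\lambda}^{(\sigma)}(x_0)$, i.e. the proximal set consists only of strictly positive points. It then remains to identify this set with the single point $x_1$. Because $|x_0|>\bar\tau\ge\sigma(1+\ln\frac{\mu\lambda}{\sigma^2})$, two subcases arise. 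If $\bar\tau<|x_0|\le\frac{\mu\lambda}{\sigma}$, Theorem~\ref{ProximalCase2} gives $\mathcal{T}_{\mu\lambda}^{(\sigma)}(x_0)=\arg\min_{x=0,x_1}\widehat{L}(x,x_0)$; since $0$ is excluded by the previous step, the minimum cannot be attained at $0$, forcing $\widehat{L}(x_1,x_0)<\widehat{L}(0,x_0)$ and hence $\mathcal{T}_{\mu\lambda}^{(\sigma)}(x_0)=\{x_1\}$. If instead $|x_0|>\frac{\mu\lambda}{\sigma}$, Theorem~\ref{ProximalCase2} (equivalently Lemma~\ref{LemmaNecCondi}) gives $\mathcal{T}_{\mu\lambda}^{(\sigma)}(x_0)=\{x_1\}$ directly. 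In both subcases $\mathcal{T}_{\mu\lambda}^{(\sigma)}(x_0)=\{\sigma W_0(-\frac{\mu\lambda}{\sigma^2}e^{-|x_0|/\sigma})+|x_0|\}$, as required.

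The only delicate point is the passage from ``all minimizers are positive'' to ``the minimizer is exactly $x_1$'': the monotone-ordering lemma certifies positivity but not the precise value, so one must feed in the fact---already proved in Theorem~\ref{ProximalCase2}---that in the relevant range the sole candidate positive minimizer is $x_1$. Everything else is a direct squeezing argument from Lemma~\ref{orderprox}, using the two prescribed elements $0$ and $p$ of $\mathcal{T}_{\mu\lambda}^{(\sigma)}(\bar\tau)$ to pin down the proximal set from below and above.
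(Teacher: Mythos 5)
Your proposal is correct and follows essentially the same route as the paper: both use the ordering in Lemma~\ref{orderprox} with the element $0$ of $\mathcal{T}_{\mu\lambda}^{(\sigma)}(\bar{\tau}_{\mu\lambda,\sigma})$ to squeeze the lower range to $\{0\}$, and with the positive element to exclude $0$ in the upper range before invoking Theorem~\ref{ProximalCase2} to identify the remaining minimizer as $x_1$. Your explicit split of the upper range into the two subcases of Theorem~\ref{ProximalCase2} merely spells out what the paper leaves implicit.
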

\begin{proof} 
Now suppose that there exists $\bar{\tau}_{\mu\lambda, \sigma}$ with
$\sigma(1+\ln\frac{\mu\lambda}{\sigma^2})\leq \bar{\tau}_{\mu\lambda, \sigma}\le \frac{\mu\lambda}{\sigma}$ 
such that $\mathcal{T}_{\mu\lambda}^{(\sigma)}(\bar{\tau}_{\mu\lambda, \sigma})$   contains $0$ and a positive number. Then by Theorem \ref{ProximalCase2}, it holds that
\[
\mathcal{T}_{\mu\lambda}^{(\sigma)}(\bar{\tau}_{\mu\lambda, \sigma})
=\{0,W_0(-\frac{\mu\lambda}{\sigma^2}e^{-\frac{\bar{\tau}_{\mu\lambda, \sigma}}{\sigma}})+\bar{\tau}_{\mu\lambda, \sigma}\}.
\]
Since $0\!\in\!\mathcal{T}_{\mu\lambda}^{(\sigma)}(\bar{\tau}_{\mu\lambda, \sigma})$, from Lemma \ref{orderprox} it follows  that $\mathcal{T}_{\mu\lambda}^{(\sigma)}(x_0)\!=\!\{0\}$ when $0\!\le\! |x_0|\!<\!\bar{\tau}_{\mu\lambda, \sigma}$.
In the following, we  consider this case when $|x_0|\!>\!\bar{\tau}_{\mu\lambda, \sigma}$.
 Notice that
 \[
 0\!<\! W_0(-\frac{\mu\lambda}{\sigma^2}e^{-\frac{\bar{\tau}_{\mu\lambda, \sigma}}{\sigma}})+\bar{\tau}_{\mu\lambda, \sigma}\in \mathcal{T}_{\mu\lambda}^{(\sigma)}(\bar{\tau}_{\mu\lambda, \sigma}).
 \]
 We know  $0\notin \mathcal{T}_{\mu\lambda}^{(\sigma)}(x_0)$ by  Lemma \ref{orderprox}.
Associating it with Theorem \ref{ProximalCase2} yields 
$\mathcal{T}_{\mu\lambda}^{(\sigma)}(x_0)$ is a single point set for any $|x_0|\!>\!\bar{\tau}_{\mu\lambda, \sigma}$ and $\mathcal{T}_{\mu\lambda}^{(\sigma)}(x_0)\!=\!\{\sigma W_0(-\frac{\mu\lambda}{\sigma^2}e^{-\frac{|x_0|}{\sigma}})+|x_0|\}$.
\end{proof}

We will present a numerical method to compute $\bar{\tau}_{\mu\lambda, \sigma}$ in Lemma \ref{threshold}.
Let  $\frac{\mu\lambda}{\sigma^2}> 1$. 
According to  Lemma  \ref{orderprox} and Lemma  \ref{threshold}, we can 
further characterize the expression of $\mathcal{T}_{\mu\lambda}^{(\sigma)}$
on the case $\sigma(1+\ln\frac{\mu\lambda}{\sigma^2})\!\leq\! |x_0|\!\leq\! \frac{\mu\lambda}{\sigma}$ by finding the threshold value $\bar{\tau}_{\mu\lambda,\sigma}$ such that
$$
\widehat{L}(x,\bar{\tau}_{\mu\lambda,\sigma})\ge \widehat{L}(0,\bar{\tau}_{\mu\lambda,\sigma})\mbox{ for any }x>0.
$$
that is, $\frac{1}{2\mu}(x-\bar{\tau}_{\mu\lambda,\sigma})^2+\lambda(1-e^{-\frac{x}{\sigma}})\ge \frac{1}{2\mu}\bar{\tau}_{\mu\lambda,\sigma}^2$ for any $x>0$, which is equivalent to  take
\begin{equation}\label{bartau0}
\bar{\tau}_{\mu\lambda,\sigma}= \min_{x>0}H(x):=\frac12 x+\mu\lambda\frac{1-e^{-\frac{x}{\sigma}}}{x}.
\end{equation}
In the following, we will solve the optimization problem \eqref{bartau0}. To this end, 
we compute the first derivative
$$
H'(x)=\frac12+\mu\lambda\frac{(\frac{x}{\sigma}+1) e^{-\frac{x}{\sigma}}-1}{x^2},\ x>0
$$
and the second derivative
\begin{align}\label{HSecDe}
H''(x)=\mu\lambda \frac{2-(\frac{x^2}{\sigma^2}+\frac{2x}{\sigma}+2)e^{-\frac{x}{\sigma}}}{x^3},\ x>0.
\end{align}
The function $h(t):=2-(t^2+2t+2)e^{-t}> h(0)=0$ for any $t>0$ as $h'(t)=t^2e^{-t}>0$ for any $t>0$. Activating \eqref{HSecDe} with $t=\frac{x}{\sigma}$, it follows that $H''(x)>0$ for any $x>0$. Then, $H'(x)$ is increasing on $(0,+\infty)$ and
$$
H'(x)\ge \lim\limits_{x\downarrow  0}H'(x)\!:=\!H'(0_+)\!=\!\frac12(1-\frac{\mu\lambda}{\sigma^2})<0\mbox{ for any } x\!>\!0.
$$
Note that 
$$
\begin{array}{ll}
H'(\sqrt{2\mu\lambda})
&\displaystyle{=\frac12+\frac{(\frac{\sqrt{2\mu\lambda}}{\sigma}+1)e^{-\frac{\sqrt{2\mu\lambda}}{\sigma}}-1}{2} }\\
&\displaystyle{=\frac{(\frac{\sqrt{2\mu\lambda}}{\sigma}+1)e^{-\frac{\sqrt{2\mu\lambda}}{\sigma}}}{2}>0. }
\end{array}
$$
Then there exists a unique $x^*\in(0,\sqrt{2\mu\lambda})$ such that $H'(x^*)\!=\!0$.
Thus, $H(x)$ is decreasing on $(0,x^*]$ and increasing on $[x^*,+\infty)$.  The point $x^*$ is the minimizer of $H(x)$ on $(0,+\infty)$, that is, $\overline{\tau}_{\mu\lambda,\sigma}\!=\!H(x^*)$ by \eqref{bartau0}.
As $H'(x^*)\!=\!0$, we have
$$
\begin{array}{ll}
0=x^*H'(x^*)
&\displaystyle{=\frac12 x^*+\mu\lambda\frac{(\frac{x^*}{\sigma}+1) e^{-\frac{x^*}{\sigma}}-1}{x^*} }\\
&\displaystyle{=x^*+\frac{\mu\lambda}{\sigma} e^{-\frac{ x^*}{\sigma}}-H(x^*). }
\end{array}
$$
Along with $\overline{\tau}_{\mu\lambda,\sigma}=H(x^*)$, we have
\begin{equation}\label{bartau}
\bar{\tau}_{\mu\lambda,\sigma}=x^*+\frac{\mu\lambda}{\sigma} e^{-\frac{ x^*}{\sigma}}
\end{equation}
where $x^*\in(0,\sqrt{2\mu\lambda})$ is the solution of the equation $H'(x)=0$ on $(0,\infty)$. The point $x^*$ can easily be found by the bisection method. 
For some choices $\mu,\lambda,\sigma>0$ such that $\frac{\mu\lambda}{\sigma^2}>1$, the corresponding values $x^*$ and $\bar{\tau}_{\mu\lambda,\sigma}$ are listed in TABLE \ref{Table1}. Numerical experiments demonstrate that for a fix $\mu\lambda>0$, $\bar{\tau}_{\mu\lambda,\sigma}$ is increasing and tends to $\sqrt{2\mu\lambda}$ as $\sigma$ tends to $0$. 
\begin{table}[hbt]
\centering
\caption{The values $x^*$ and $\bar{\tau}_{\mu\lambda,\sigma}$ for some $\mu,\lambda,\sigma>0$ such that $\frac{\mu\lambda}{\sigma^2}>1$.}
\resizebox{\linewidth}{!}{
\label{Table1}
\begin{tabular}{|c|c|c|c|c|c|c|c|} \hline
&$\sigma$ & 1.4 & 1 & 0.5 & 0.3 & 0.2 & 0.1 \\ 
$\mu\lambda=2$ & $x^*$ &0.04247947& 1.09157888 &1.88725512 & 1.98992887 & 1.9994994 & 1.99999996 \\ 
& $\bar{\tau}_{\mu\lambda,\sigma}$ &1.42835552 & 1.76295101 & 1.97904843 & 1.99870274& 1.99995454 & 2 \\ \hline
&$\sigma$ &0.99 & 0.9 & 0.5 & 0.3 & 0.2 & 0.1 \\ 
$\mu\lambda=1$ & $x^*$ &0.02988714 & 0.28837712& 1.16132153 & 1.3730464 & 1.40925117 & 1.41420584\\
& $\bar{\tau}_{\mu\lambda,\sigma}$ &1.00994987 & 1.09487137 & 1.3573499 & 1.40733821 & 1.41360448 & 1.41421305 \\ \hline
&$\sigma$ &0.49 & 0.3& 0.2 & 0.1 & 0.05 & 0.02\\
$\mu\lambda=\frac{1}{4}$ & $x^*$ &  0.02977356 & 0.49609826 & 0.64499062& 0.70462559 & 0.70710291& 0.70710678\\
& $\bar{\tau}_{\mu\lambda,\sigma}$ &0.5098995 & 0.65555503 & 0.69468768& 0.70680224& 0.70710652& 0.70710678\\ \hline
\end{tabular}}
\end{table}

With the help of Corollary \ref{threshold}, the following result holds.

\begin{theorem}\label{Coratau} Let $\frac{\mu\lambda}{\sigma^2}>1$ and $x_0\in \bR$. Then
\begin{equation}\label{ProximalThreshold}
\mathcal{T}_{\mu\lambda}^{(\sigma)}(x_0)=\left\{\begin{array}{ll}
\{0\}, & |x_0|< \bar{\tau}_{\mu\lambda,\sigma},\\
\{0,x_1\},& |x_0|=\bar{\tau}_{\mu\lambda,\sigma},\\
\{x_1\},& \mbox{otherwise},
\end{array}
\right.
\end{equation}
where the threshold $\bar{\tau}_{\mu\lambda,\sigma}$ is obtained by Equation \eqref{bartau}
and $x_1:=\sigma W_0(-\frac{\mu\lambda}{\sigma^2}e^{-\frac{|x_0|}{\sigma}})+|x_0|$.
\end{theorem}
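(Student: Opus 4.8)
The plan is to realize the threshold $\bar\tau_{\mu\lambda,\sigma}$ defined by \eqref{bartau} as the exact transition value at which the positive candidate $x_1$ overtakes $0$ as the global minimizer, and then to let Lemma \ref{threshold} propagate this single crossing to all $x_0$. Everything rests on three facts already extracted in the discussion preceding \eqref{bartau}: that $\bar\tau_{\mu\lambda,\sigma}=\min_{x>0}H(x)=H(x^*)$ with $x^*\in(0,\sqrt{2\mu\lambda})$ the unique root of $H'=0$; that $\bar\tau_{\mu\lambda,\sigma}=x^*+\tfrac{\mu\lambda}{\sigma}e^{-x^*/\sigma}$; and that, by the very derivation of \eqref{bartau0}, $\widehat{L}(x,\bar\tau_{\mu\lambda,\sigma})\ge\widehat{L}(0,\bar\tau_{\mu\lambda,\sigma})$ for every $x>0$, with equality at $x=x^*$.

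First I would locate $\bar\tau_{\mu\lambda,\sigma}$ inside the interval $[\sigma(1+\ln\tfrac{\mu\lambda}{\sigma^2}),\tfrac{\mu\lambda}{\sigma}]$, so that the two-candidate description of Theorem \ref{ProximalCase2} (Case 2) becomes available at $x_0=\bar\tau_{\mu\lambda,\sigma}$. The upper bound is immediate: since $H$ is strictly decreasing on $(0,x^*]$ and $\lim_{x\downarrow 0}H(x)=\tfrac{\mu\lambda}{\sigma}$, one gets $\bar\tau_{\mu\lambda,\sigma}=H(x^*)<\tfrac{\mu\lambda}{\sigma}$. For the lower bound I would use \eqref{bartau} to write $\bar\tau_{\mu\lambda,\sigma}=\sigma\big(\tfrac{x^*}{\sigma}+\tfrac{\mu\lambda}{\sigma^2}e^{-x^*/\sigma}\big)$ and minimize $t\mapsto t+\tfrac{\mu\lambda}{\sigma^2}e^{-t}$; its global minimum equals $1+\ln\tfrac{\mu\lambda}{\sigma^2}$, attained at $t=\ln\tfrac{\mu\lambda}{\sigma^2}>0$ (positivity using $\tfrac{\mu\lambda}{\sigma^2}>1$), whence $\bar\tau_{\mu\lambda,\sigma}\ge\sigma(1+\ln\tfrac{\mu\lambda}{\sigma^2})$. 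This is the one place needing a small auxiliary convexity argument and is, in my view, the main (though mild) obstacle.

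Next I would identify $x^*$ with $x_1$ and thereby pin down $\mathcal{T}_{\mu\lambda}^{(\sigma)}(\bar\tau_{\mu\lambda,\sigma})$. Because $\bar\tau_{\mu\lambda,\sigma}$ lies in the Case 2 interval, Theorem \ref{ProximalCase2} gives the inclusion $\mathcal{T}_{\mu\lambda}^{(\sigma)}(\bar\tau_{\mu\lambda,\sigma})\subseteq\{0,x_1\}$. On the other hand, the inequality recalled above shows that $0$ is a global minimizer of $\widehat{L}(\cdot,\bar\tau_{\mu\lambda,\sigma})$ over $[0,\infty)$, while the equality at $x^*>0$ shows $x^*$ is another; hence $\{0,x^*\}\subseteq\mathcal{T}_{\mu\lambda}^{(\sigma)}(\bar\tau_{\mu\lambda,\sigma})$. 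The only positive member allowed by the inclusion is $x_1$, so $x^*=x_1$ and $\mathcal{T}_{\mu\lambda}^{(\sigma)}(\bar\tau_{\mu\lambda,\sigma})=\{0,x_1\}$, which is precisely the middle line of \eqref{ProximalThreshold} and simultaneously verifies the hypothesis of Lemma \ref{threshold}.

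Finally I would invoke Lemma \ref{threshold} with this very $\bar\tau_{\mu\lambda,\sigma}$: since $\mathcal{T}_{\mu\lambda}^{(\sigma)}(\bar\tau_{\mu\lambda,\sigma})$ contains $0$ and the positive number $x_1$, the lemma yields $\mathcal{T}_{\mu\lambda}^{(\sigma)}(x_0)=\{0\}$ for $0\le|x_0|<\bar\tau_{\mu\lambda,\sigma}$ and $\mathcal{T}_{\mu\lambda}^{(\sigma)}(x_0)=\{\sigma W_0(-\tfrac{\mu\lambda}{\sigma^2}e^{-|x_0|/\sigma})+|x_0|\}=\{x_1\}$ for $|x_0|>\bar\tau_{\mu\lambda,\sigma}$. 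Together with the boundary value $\{0,x_1\}$ computed in the previous step, these three regimes reproduce \eqref{ProximalThreshold} exactly, completing the argument.
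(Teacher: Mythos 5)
Your proposal is correct and follows essentially the same route as the paper: derive $\bar{\tau}_{\mu\lambda,\sigma}=\min_{x>0}H(x)$ from the condition $\widehat{L}(x,\bar{\tau}_{\mu\lambda,\sigma})\ge\widehat{L}(0,\bar{\tau}_{\mu\lambda,\sigma})$ and then invoke Lemma \ref{threshold}. In fact you are somewhat more careful than the paper, which leaves implicit the verification that $\bar{\tau}_{\mu\lambda,\sigma}\in[\sigma(1+\ln\frac{\mu\lambda}{\sigma^2}),\frac{\mu\lambda}{\sigma}]$ and that $\mathcal{T}_{\mu\lambda}^{(\sigma)}(\bar{\tau}_{\mu\lambda,\sigma})=\{0,x_1\}$; your bounds via $H(0_+)=\frac{\mu\lambda}{\sigma}$ and the convex minorant $t\mapsto t+\frac{\mu\lambda}{\sigma^2}e^{-t}$ supply exactly what is needed.
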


We remark that the threshold $\bar{\tau}_{\mu\lambda,\sigma}$ in Theorem \ref{Coratau}
does not have a closed form and is obtained by solving an inverse problem. Motivated by the threshold value $\bar{\tau}_{\mu\lambda,\sigma}$, we will tight the interval in case 2 in \eqref{Case2}. The following corollary can be obtained by activating Theorem \ref{ProximalCase2} with $x_0=\sqrt{2\mu\lambda}$.
\begin{corollary}\label{SpExample}
   Denote $x_1\!:=\!\sigma W_0(-\frac{\mu\lambda}{\sigma^2}e^{-\frac{\sqrt{2\mu\lambda}}{\sigma}})+\sqrt{2\mu\lambda}$. Then
      if $\frac{\mu\lambda}{\sigma^2}\!>\!2$, one has
      \(
    \mathcal{T}_{\mu\lambda}^{(\sigma)}(\sqrt{2\mu\lambda})\!=\!\{x_1\}
    \) 
    and
     \[
     x_1\in(\max\{\sigma\ln\frac{\mu\lambda}{\sigma^2},\sqrt{2\mu\lambda}-2\sigma\},\sqrt{2\mu\lambda}).
     \]
\end{corollary}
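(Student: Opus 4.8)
The plan is to apply Theorem \ref{ProximalCase2} at the single point $x_0=\sqrt{2\mu\lambda}$, after locating this point relative to the two thresholds $\sigma(1+\ln\frac{\mu\lambda}{\sigma^2})$ and $\frac{\mu\lambda}{\sigma}$ that delimit the three cases of \eqref{Case2}. Write $r=\frac{\mu\lambda}{\sigma^2}>2$. The upper comparison is immediate: squaring the positive quantities, $\sqrt{2\mu\lambda}<\frac{\mu\lambda}{\sigma}$ is equivalent to $2\mu\lambda<\frac{(\mu\lambda)^2}{\sigma^2}$, i.e. to $r>2$. For the lower comparison I would reduce $\sigma(1+\ln r)<\sqrt{2\mu\lambda}$ to $1+\ln r<\sqrt{2r}$ and prove the latter by the substitution $s=\sqrt{2r}$: the claim becomes $2\ln s-s<\ln 2-1$, and since $\psi(s):=2\ln s-s$ is maximized at $s=2$ with $\psi(2)=2\ln 2-2\le \ln 2-1$ (equivalently $\ln 2\le 1$), the inequality holds for every $r>0$, strictly once $r\ne 2$. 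Hence $\sigma(1+\ln r)<\sqrt{2\mu\lambda}<\frac{\mu\lambda}{\sigma}$, so $x_0=\sqrt{2\mu\lambda}$ lies strictly inside the middle interval of \eqref{Case2}; we are in Case 2 of Theorem \ref{ProximalCase2} and $\mathcal{T}_{\mu\lambda}^{(\sigma)}(\sqrt{2\mu\lambda})=\arg\min\limits_{x=0,x_1}\widehat{L}(x,\sqrt{2\mu\lambda})$.

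The crux is then to decide this two-point $\arg\min$ in favour of $x_1$, and here the special value $x_0=\sqrt{2\mu\lambda}$ pays off. By \eqref{LDef}, $\widehat{L}(0,\sqrt{2\mu\lambda})=\frac{1}{2\mu}(\sqrt{2\mu\lambda})^2=\lambda$, whereas at the interior endpoint $\widehat{L}(\sqrt{2\mu\lambda},\sqrt{2\mu\lambda})=\lambda(1-e^{-\sqrt{2\mu\lambda}/\sigma})<\lambda$. Recalling from the proof of Theorem \ref{ProximalCase2} that $x_1=c_{x_0}$ is the larger stationary point, lying in $(\sigma\ln r,|x_0|)$, and that $\widehat{L}(\cdot,x_0)$ is strictly increasing on $[c_{x_0},|x_0|]$, I obtain $\widehat{L}(x_1,\sqrt{2\mu\lambda})<\widehat{L}(\sqrt{2\mu\lambda},\sqrt{2\mu\lambda})<\lambda=\widehat{L}(0,\sqrt{2\mu\lambda})$. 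Therefore the $\arg\min$ is the singleton $\{x_1\}$, which is the first assertion. (Equivalently one may note $\bar{\tau}_{\mu\lambda,\sigma}=\min\limits_{x>0}H(x)\le H(\sqrt{2\mu\lambda})=\sqrt{2\mu\lambda}\,(1-\tfrac12 e^{-\sqrt{2\mu\lambda}/\sigma})<\sqrt{2\mu\lambda}$ and invoke Theorem \ref{Coratau}.)

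For the enclosure of $x_1$, the three inequalities come from different sources. The upper bound $x_1<\sqrt{2\mu\lambda}$ is just Lemma \ref{shrinkage} applied to $\mathcal{T}_{\mu\lambda}^{(\sigma)}(\sqrt{2\mu\lambda})=\{x_1\}\subseteq[0,\sqrt{2\mu\lambda})$. The bound $x_1>\sigma\ln\frac{\mu\lambda}{\sigma^2}$ follows from the Case 2 geometry: $x_1=c_{x_0}$ is the larger of the two roots of $\widehat{L}'(\cdot,x_0)=0$, which strictly straddle the unique minimizer $\sigma\ln r$ of $\widehat{L}'$ because $\widehat{L}'(\sigma\ln r,x_0)=\frac{1}{\mu}(\sigma(1+\ln r)-\sqrt{2\mu\lambda})<0$ by the strict inequality established in the first paragraph. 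Finally, $x_1>\sqrt{2\mu\lambda}-2\sigma$ is a consequence of the defining property of the principal branch $W_0(\cdot)\ge -1$: since $x_1=\sigma W_0(-\frac{\mu\lambda}{\sigma^2}e^{-\sqrt{2\mu\lambda}/\sigma})+\sqrt{2\mu\lambda}$, we even get $x_1\ge \sqrt{2\mu\lambda}-\sigma>\sqrt{2\mu\lambda}-2\sigma$. Combining the three yields $x_1\in(\max\{\sigma\ln\frac{\mu\lambda}{\sigma^2},\sqrt{2\mu\lambda}-2\sigma\},\sqrt{2\mu\lambda})$.

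I expect the main obstacle to be the lower threshold comparison $\sigma(1+\ln\frac{\mu\lambda}{\sigma^2})<\sqrt{2\mu\lambda}$, i.e. $1+\ln r<\sqrt{2r}$: this is exactly where the hypothesis $r>2$ (rather than merely $r>1$) is used, and it is sharp, with equality at $r=2$. Everything else is either a direct substitution into $\widehat{L}$ or a reading-off of the monotonicity and branch properties already proved. It is worth double-checking that the strictness at $r>2$ propagates so that $x_0$ is \emph{interior} to the Case 2 interval and not on its boundary, where the $\arg\min$ could a priori enlarge to $\{0,x_1\}$.
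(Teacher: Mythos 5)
Your proof is correct, and its central step is genuinely different from the paper's. To decide the two-point $\arg\min$ the paper first proves the lower bound $x_1>\sqrt{2\mu\lambda}-2\sigma$ (via the sign of $\widehat{L}'(\sqrt{2\mu\lambda}-2\sigma,\sqrt{2\mu\lambda})$, which needs the auxiliary function $h(t)=t^2e^{2-\sqrt{2}t}-2$), and then uses the stationarity relation at $x_1$ to factor $\widehat{L}(x_1,\sqrt{2\mu\lambda})-\widehat{L}(0,\sqrt{2\mu\lambda})=\frac{1}{2\mu}(x_1-\sqrt{2\mu\lambda})(x_1-\sqrt{2\mu\lambda}+2\sigma)<0$; the enclosure of $x_1$ is thus an ingredient of, not a by-product of, the comparison. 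You instead exploit the special choice $x_0=\sqrt{2\mu\lambda}$ directly: $\widehat{L}(0,x_0)=\lambda$ while $\widehat{L}(|x_0|,x_0)=\lambda(1-e^{-|x_0|/\sigma})<\lambda$, and monotonicity of $\widehat{L}(\cdot,x_0)$ on $[c_{x_0},|x_0|]$ forces $\widehat{L}(x_1,x_0)<\widehat{L}(|x_0|,x_0)$ — shorter, and it decouples the $\arg\min$ decision from the enclosure. Your derivation of $x_1\ge\sqrt{2\mu\lambda}-\sigma$ straight from $W_0\ge -1$ is also cleaner than the paper's route to the weaker bound $x_1>\sqrt{2\mu\lambda}-2\sigma$ (and strictness holds since the argument of $W_0$ is strictly above $-1/e$, by the same threshold inequality you prove). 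Two harmless slips in your commentary: the inequality $1+\ln r<\sqrt{2r}$ holds strictly for \emph{all} $r>0$ (since $\max_s(2\ln s-s)=2\ln 2-2<\ln 2-1$), so there is no equality at $r=2$ there; the case $r=2$ is sharp only for the other comparison $\sqrt{2\mu\lambda}<\frac{\mu\lambda}{\sigma}$, which is where the hypothesis $\frac{\mu\lambda}{\sigma^2}>2$ is actually consumed.
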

\begin{proof}
 Define $h(t):=t^2 e^{2-\sqrt{2}t}-2$ for any $t\!\in\! \mathbb{R}$. Then for any $t>\sqrt{2}$, it holds that
 \[
 h'(t)\!=\!\sqrt{2}t(\sqrt{2}-t)e^{2-\sqrt{2}t}\!<\!0.
 \]
 Hence, 
$h(t)<h(\sqrt{2})=0$ for any  $t>\sqrt{2}$.
On the other hand, taking $x=\sqrt{2\mu\lambda}-2\sigma$ and 
$x_0=\sqrt{2\mu\lambda}$, it follows from the first equation in \eqref{Derivatives} that
\begin{align}\label{Fderivative}
\widehat{L}'(\sqrt{2\mu\lambda}-2\sigma,\sqrt{2\mu\lambda})
=\frac{\sigma}{\mu}(\frac{\mu\lambda}{ \sigma^2}e^{2-\frac{\sqrt{2\mu\lambda}}{\sigma}}-2) <0.
\end{align}
where the inequality is by activating the fact $h(t)\!<\!h(\sqrt{2})\!=\!0$ for any $t>\sqrt{2}$ by taking $\overline{t}\!=\!\frac{\sqrt{\mu\lambda}}{\sigma}\!>\!\sqrt{2}$.
Again since $\frac{\mu\lambda}{\sigma^2}\!>\!2$,  it is easily to check that
$x_0:=\sqrt{2\mu\lambda}<\frac{\mu\lambda}{\sigma}$ and
$x_0>\sigma(1+\ln\frac{\mu\lambda}{\sigma^2})$ where is by the facts
the function $g(t)=1+2ln t-\sqrt{2}t$ is decrease on $[\sqrt{2},\infty)$ and 
$1+ln2< 2$ from Lemma \ref{ThresholdLemma}.
Thus $\sigma(1+\ln\frac{\mu\lambda}{\sigma^2})<x_0<\frac{\mu\lambda}{\sigma}$.
From the proof of the case 2 in Theorem \ref{ProximalCase2} and \eqref{Fderivative}, it follows that $\sigma\ln\frac{\mu\lambda}{\sigma^2}\leq x_1<|x_0|$ and $\sqrt{2\mu\lambda}-2\sigma<x_1$.
Hence, 
\[
x_1\in(\max\{\sigma\ln\frac{\mu\lambda}{\sigma^2},\sqrt{2\mu\lambda}-2\sigma\},\sqrt{2\mu\lambda}).
\]
Notice
\begin{align*}
&\widehat{L}(x_1,\sqrt{2\mu\lambda})-\widehat{L}(0,\sqrt{2\mu\lambda}) \\
&=\lambda+\frac{\sigma(x_1-\sqrt{2\mu\lambda})}{\mu}+\frac{1}{2\mu}x_1^2-x_1\frac{\sqrt{2\mu\lambda}}{\mu}\nonumber\\
&=\frac{1}{2\mu}x_1^2-(\frac{\sqrt{2\mu \lambda}-\sigma}{\mu})x_1+\lambda-\frac{\sigma\sqrt{2\mu\lambda}}{\mu}\\
&=\frac{1}{2\mu}(x_1-\sqrt{2\mu\lambda})
(x_1-\sqrt{2\mu\lambda}+2\sigma)<0
\end{align*}
where the first equality is by \eqref{LDef} and \eqref{WSolution}.
Along with Theorem \ref{ProximalCase2}, we know that and $\mathcal{T}_{\mu\lambda}^{(\sigma)}(\sqrt{2\mu\lambda})=\{x_1\}$.
\end{proof}

\begin{remark}
Let   $\frac{\mu\lambda}{\sigma^2}\!>\!2$.
 $x_1\!:=\!\sigma W_0(-\frac{\mu\lambda}{\sigma^2}e^{-\frac{\sqrt{2\mu\lambda}}{\sigma}})+\sqrt{2\mu\lambda}$ in  Corollary \ref{SpExample} satisfies
$x_1\in (\sigma(1+\ln\frac{\mu\lambda}{\sigma^2}),\sqrt{2\mu\lambda})$ when $\sqrt{2}<\frac{\sqrt{\mu\lambda}}{\sigma}<\iota$, otherwise
$x_1\in (\sqrt{2\mu\lambda}-2\sigma,\sqrt{2\mu\lambda})$
where $\iota\approx 2.93868$ is a root of the equation $\sqrt{2}t-2\ln t-2=0$. 
In addition, fix any $\mu\lambda>0$, then
$x_1\to \sqrt{2\mu\lambda}$ when $\sigma \to 0$.
\end{remark}
\begin{proof}
 Let $h(t):=\sqrt{2}t-2-2\ln t$ for any $t>\sqrt{2}$.
It follows that $h'(t)=\sqrt{2}-\frac{2}{t}>0$ for any $t>\sqrt{2}$. Then,
$h(t)$ is strictly increasing on $[\sqrt{2},+\infty)$. Notice that $h(\sqrt{2})=-\ln 2<0$ and $h(4\sqrt{2})=6-5\ln 2>0$. Denote by $\iota\in(\sqrt{2},4\sqrt{2})$ the only root of the equation $h(t)=0$ on $(\sqrt{2},+\infty)$. Therefore, $h(t)\ge h(\iota)=0$ for any $t\ge\iota$, by taking $t=\frac{\sqrt{\mu\lambda}}{\sigma}$, which 
implies that $\sqrt{2\mu\lambda}-2\sigma\ge\sigma\ln\frac{\mu\lambda}{\sigma^2}$ when $\frac{\sqrt{\mu\lambda}}{\sigma}\ge\iota$.
Clearly, fix $\mu\lambda>0$, we have $x_1\to\sqrt{2\mu\lambda}$ whenever $\sigma\to 0$. 
\end{proof}

With the help of Corollary \ref{SpExample} and Theorem \ref{Coratau}, a refined conclusion for Theorem \ref{ProximalCase2} can be obtained. 
\begin{theorem}\label{ProxTheoCase2Refined} 
 Let $\frac{\mu\lambda}{\sigma^2}>1$ and $x_0\in \bR$. Then
{\footnotesize\begin{align}\label{RefineProx}
\mathcal{T}_{\mu\lambda}^{(\sigma)}(x_0)\!\!=\!\!\left\{\begin{array}{ll}
\{0\}, & |x_0|< \sigma(1+\ln \frac{\mu\lambda}{\sigma^2}),\\
\arg\min\limits_{x=0,x_1}\{\widehat{L}(x,x_0)\}\!,\! \!\!\!& \sigma(1\!+\!\ln \frac{\mu\lambda}{\sigma^2})\leq |x_0|\!\le \!\min\{\frac{\mu\lambda}{\sigma},\!\!\sqrt{2\mu\lambda}\},\\
x_1,& |x_0|>\min\{\frac{\mu\lambda}{\sigma},\sqrt{2\mu\lambda}\},
\end{array}
\right.
\end{align}}
where   $x_1:=\sigma W_0(-\frac{\mu\lambda}{\sigma^2}e^{-\frac{|x_0|}{\sigma}})+|x_0|$.
\end{theorem}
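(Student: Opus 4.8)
The plan is to read \eqref{RefineProx} as \eqref{Case2} of Theorem \ref{ProximalCase2} with the single change that the threshold $\frac{\mu\lambda}{\sigma}$ has been lowered to $\min\{\frac{\mu\lambda}{\sigma},\sqrt{2\mu\lambda}\}$, and to notice that this change is vacuous in most of the parameter range. Since $\frac{\mu\lambda}{\sigma}\le \sqrt{2\mu\lambda}$ is equivalent (after squaring and dividing by $\mu\lambda>0$) to $\frac{\mu\lambda}{\sigma^2}\le 2$, when $1<\frac{\mu\lambda}{\sigma^2}\le 2$ we have $\min\{\frac{\mu\lambda}{\sigma},\sqrt{2\mu\lambda}\}=\frac{\mu\lambda}{\sigma}$, so \eqref{RefineProx} is literally \eqref{Case2} and there is nothing to prove. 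Hence I would immediately reduce to the regime $\frac{\mu\lambda}{\sigma^2}>2$, where $\sqrt{2\mu\lambda}<\frac{\mu\lambda}{\sigma}$ and $\min\{\frac{\mu\lambda}{\sigma},\sqrt{2\mu\lambda}\}=\sqrt{2\mu\lambda}$. In that regime the first two branches of \eqref{RefineProx}, which cover $|x_0|\le \sqrt{2\mu\lambda}$, are inherited verbatim from Theorem \ref{ProximalCase2}: the branch $|x_0|<\sigma(1+\ln\frac{\mu\lambda}{\sigma^2})$ is identical, and for $\sigma(1+\ln\frac{\mu\lambda}{\sigma^2})\le|x_0|\le\sqrt{2\mu\lambda}$ one still has $|x_0|\le\frac{\mu\lambda}{\sigma}$, so the $\arg\min_{x=0,x_1}$ branch of \eqref{Case2} applies (using the ordering $\sigma(1+\ln\frac{\mu\lambda}{\sigma^2})<\sqrt{2\mu\lambda}<\frac{\mu\lambda}{\sigma}$ established inside the proof of Corollary \ref{SpExample}). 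Thus the only genuine content is the third branch: establishing $\mathcal{T}_{\mu\lambda}^{(\sigma)}(x_0)=\{x_1\}$ for every $|x_0|>\sqrt{2\mu\lambda}$.

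For this extension the engine is the order-preservation Lemma \ref{orderprox} fed by the base case of Corollary \ref{SpExample}. First I would record from Corollary \ref{SpExample} that, under $\frac{\mu\lambda}{\sigma^2}>2$, the specific input $x_0=\sqrt{2\mu\lambda}$ already yields $\mathcal{T}_{\mu\lambda}^{(\sigma)}(\sqrt{2\mu\lambda})=\{x_1^0\}$ with $x_1^0:=\sigma W_0(-\frac{\mu\lambda}{\sigma^2}e^{-\sqrt{2\mu\lambda}/\sigma})+\sqrt{2\mu\lambda}>0$; in particular $0\notin\mathcal{T}_{\mu\lambda}^{(\sigma)}(\sqrt{2\mu\lambda})$. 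Then for any $|x_0|>\sqrt{2\mu\lambda}$ I would apply Lemma \ref{orderprox} with $\tau^1=\sqrt{2\mu\lambda}$ and $\tau^2=x_0$, concluding that every $\beta\in\mathcal{T}_{\mu\lambda}^{(\sigma)}(x_0)$ satisfies $\beta\ge x_1^0>0$, whence $0\notin\mathcal{T}_{\mu\lambda}^{(\sigma)}(x_0)$.

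It then remains to convert ``$0$ is not a minimizer'' into ``$x_1$ is the unique minimizer'' by invoking Theorem \ref{ProximalCase2} once more. If $|x_0|>\frac{\mu\lambda}{\sigma}$ the claim $\mathcal{T}_{\mu\lambda}^{(\sigma)}(x_0)=\{x_1\}$ is precisely its third branch. If instead $\sqrt{2\mu\lambda}<|x_0|\le\frac{\mu\lambda}{\sigma}$, Theorem \ref{ProximalCase2} places us in the $\arg\min_{x=0,x_1}$ branch; but having just shown $0\notin\mathcal{T}_{\mu\lambda}^{(\sigma)}(x_0)$, the minimizer must be $x_1$ alone, i.e. $\widehat{L}(x_1,x_0)<\widehat{L}(0,x_0)$ and $\mathcal{T}_{\mu\lambda}^{(\sigma)}(x_0)=\{x_1\}$. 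Finally I would check consistency at the boundary $|x_0|=\sqrt{2\mu\lambda}$, which \eqref{RefineProx} assigns to the middle branch: there Corollary \ref{SpExample} gives the strict inequality $\widehat{L}(x_1,\sqrt{2\mu\lambda})<\widehat{L}(0,\sqrt{2\mu\lambda})$, so the $\arg\min$ again evaluates to $\{x_1\}$, matching the third branch and making the three cases fit together seamlessly.

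As for the main obstacle, the proof is essentially a bookkeeping exercise stitching together Theorem \ref{ProximalCase2}, Corollary \ref{SpExample}, and Lemma \ref{orderprox}; the one step carrying real analytic weight, namely $\widehat{L}(x_1,\sqrt{2\mu\lambda})<\widehat{L}(0,\sqrt{2\mu\lambda})$ when $\frac{\mu\lambda}{\sigma^2}>2$, has already been discharged in Corollary \ref{SpExample} through the factorization $\widehat{L}(x_1,\sqrt{2\mu\lambda})-\widehat{L}(0,\sqrt{2\mu\lambda})=\frac{1}{2\mu}(x_1-\sqrt{2\mu\lambda})(x_1-\sqrt{2\mu\lambda}+2\sigma)<0$. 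Consequently the only things I must be careful about are the case split on whether $\frac{\mu\lambda}{\sigma^2}$ exceeds $2$ and the precise placement of the endpoint $\sqrt{2\mu\lambda}$ so that the boundary does not fall through the cracks between the branches.
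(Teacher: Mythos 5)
Your proposal is correct and follows essentially the same route as the paper: reduce to the regime $\frac{\mu\lambda}{\sigma^2}>2$ (where the minimum switches to $\sqrt{2\mu\lambda}$), use Corollary \ref{SpExample} at the base point $x_0=\sqrt{2\mu\lambda}$, and propagate the conclusion to all $|x_0|>\sqrt{2\mu\lambda}$ by order preservation. The only cosmetic difference is that you invoke Lemma \ref{orderprox} directly, whereas the paper packages the same monotonicity argument through Theorem \ref{Coratau} by concluding $\bar{\tau}_{\mu\lambda,\sigma}\le\sqrt{2\mu\lambda}$.
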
 
\begin{proof} It is easily checked that for any $\frac{\mu\lambda}{\sigma^2}\leq 2$,
\[
[\sigma(1+\ln \frac{\mu\lambda}{\sigma^2}),\frac{\mu\lambda}{\sigma}]= [\sigma(1+\ln \frac{\mu\lambda}{\sigma^2}),\min\{\frac{\mu\lambda}{\sigma},\sqrt{2\mu\lambda}\}].
\]
By Theorem \ref{ProximalCase2}, it is sufficient to prove the equation \eqref{RefineProx}  holds for any $\frac{\mu\lambda}{\sigma^2}>2$. 
Assume that $\frac{\mu\lambda}{\sigma^2}>2$ and $|x_0|>\sigma(1+\ln \frac{\mu\lambda}{\sigma^2})$. Then $\mathcal{T}_{\mu\lambda}^{(\sigma)}(\sqrt{2\mu\lambda})=\{x_1\}$ and $x_1<\sqrt{2\mu\lambda}$ follow from
Corollary \ref{SpExample}. Again by Theorem \ref{Coratau}, we know that 
$\bar{\tau}_{\mu\lambda,\sigma}\leq \sqrt{2\mu\lambda}$, $\bar{\tau}_{\mu\lambda,\sigma}$ defined as in Theorem \ref{Coratau}.
Hence, the desired result can be obtained.
\end{proof}

The proximal operator of PiE function computed by Equation \eqref{ProxTmulambdasigma} and 
Theorems  \ref{ProximalCase1}
and  \ref{ProxTheoCase2Refined}, is illustrated in Fig. \ref{PiEProximal}.

\begin{figure}[!tbp]
\begin{center}
\includegraphics[scale=0.6]{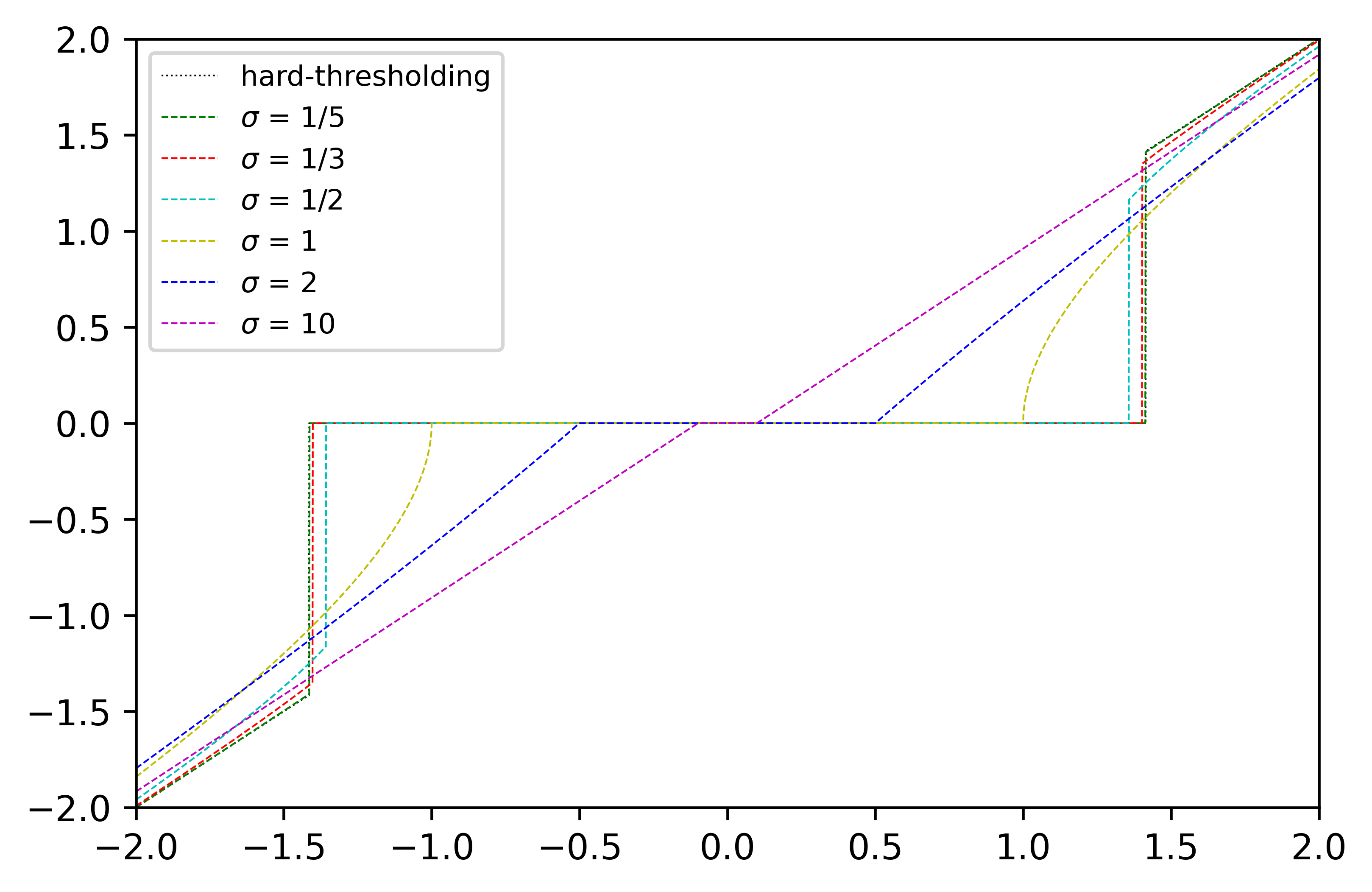}
\caption{Let $\mu\lambda=1$. The proximal operator $\prox^{(\sigma)}_{\mu\lambda }$ (dashed) of PiE $f_{\sigma}$ with $\sigma=\{1/5,1/3,1/2,1,2,10\}$  and the hard-thresholding operator (dashdot).}
\label{PiEProximal}
\end{center}
\end{figure}

At the end of this section, when $\frac{\mu\lambda}{\sigma^2}>1$, we compare the efficiency of computing the proximal operator $\prox^{(\sigma)}_{\mu\lambda f }$ for PiE by three formulas: \eqref{ErrorFormula}, \eqref{RefineProx} in Theorem \ref{ProxTheoCase2Refined}, and \eqref{ProximalThreshold} in Theorem \ref{Coratau}.
We choose one million equally distributed points on $[0,10]$. The computation time for those three methods is listed in TABLE \ref{Table}. The best results are highlighted in boldface. The proximal operator computed by \eqref{ProximalThreshold} requires approximately $60\%$ as much time as \eqref{ErrorFormula}. 

\begin{table}[!hbt]
\centering
\caption{Computation time for three formulas.}
\label{Table}
\resizebox{\linewidth}{!}{
\begin{tabular}{|c|c|c|c|c|} \hline
Formulas &  \eqref{ErrorFormula}  & \eqref{RefineProx} & \eqref{ProximalThreshold}\\ \hline
 $\mu=1$, $\lambda=1$, $\sigma=1/5$&0.3281s& 0.2656s& {\bf 0.1712}s\\
 $\mu=1$, $\lambda=1/2$, $\sigma=1/2$ &0.3594s&0.3125s& {\bf 0.2188}s  \\
 $\mu=1$, $\lambda=1/10$, $\sigma=1/5$ &0.3125s&0.2500s& {\bf 0.1875}s  \\
 $\mu=1/5$, $\lambda=1/10$, $\sigma=1/10$  &0.5313s&0.3594s&{\bf 0.2813}s \\
\hline
\end{tabular}}
\end{table}

\section{An application in compressed sensing}\label{Section:CS}

Suppose that $A$ is a given $m\times n$ measurement matrix with $m<n$ and $\bb\in\bR^m$ is the observation vector.
Reconstructing a sparse signal from linear measurements is a current issue in compressed sensing, i.e.,
$
\bb=A\bx.
$
To find the sparse signal $\bx\in\bR^n$, 
the following  optimization model is often used to
\begin{equation}\label{CSOptimization}
\min_{\bx\in\bR^n} \frac12\|A\bx-\bb\|_2^2+\lambda P(\bx)
\end{equation}
where $\lambda>0$ is a regularization parameter and $P(\bx)$ is a penalty term, which  reflects our prior knowledge about the signal to be recovered.
In this paper, we consider $P(\bx)$ be the PiE on $\bR^n$, i.e., for any $\bx\in\bR^n$
\begin{equation}\label{Pfsigmand}
P(\bx)=P_{\sigma}(\bx):=\sum_{i=1}^n f_{\sigma}(x_i)=\sum_{i=1}^n (1-e^{-|x_i|/\sigma}).
\end{equation}
where $f_{\sigma}$ is defined as in \eqref{PiE}.

\subsection{Iterative Shrinkage and Thresholding Algorithm}\label{Subsection:ISTA}

In this subsection, we are devoted to applying ISTA \cite{Bayram2015} to  the problem \eqref{CSOptimization} with the PiE penalized function, i.e.,  
\begin{equation}\label{PCSOptimization}
\min_{\bx\in\bR^n} \frac12\|A\bx-\bb\|_2^2+\lambda P_{\sigma}(\bx)
\end{equation}
where $P_{\sigma}$ is given by \eqref{Pfsigmand}.
Given the current iterate point $\bx^l$, the first term in \eqref{PCSOptimization} can be bounded above by the second order Taylor expansion at $\bx^l$. Thus, the next iterate point $\bx^{l+1}$ can be given by
$$
\begin{array}{ll}
&\bx^{l+1}
\!:=\!\arg\min\limits_{\bx\in\bR^n} \frac12\|A\bx^l-\bb\|_2^2\!+\!(\bx-\bx^{l})^{\top}(A^{\top}(A\bx^l-\bb))\\ &\qquad\qquad+\frac{1}{2\mu}\|\bx-\bx^{l}\|^2_2 +\lambda P_{\sigma}(\bx)
\end{array}
$$
where  $\mu$ is a constant parameter. 
After some simple algebraic manipulation and cancellation of constant terms, the above iteration expression  can be rewritten as
$$
\bx^{l+1}=\arg\min_{\bx\in\bR^n} \lambda P_{\sigma}(\bx)+ \frac{1}{2\mu}\Big\|\bx-\big(\bx^l-\mu(A^{\top}(A\bx^l-\bb)\big) \Big\|_2^2,
$$
which by the definition of the proximal operator yields
\begin{equation}\label{ISTAIteration}
\bx^{l+1}=\prox_{\mu\lambda P_{\sigma}}\Big((I_n-\mu A^{\top}A)\bx^l+\mu A^{\top}\bb \Big).
\end{equation}
The iteration expression \eqref{ISTAIteration} was well known as ISTA or forward-backward splitting algorithm in \cite{Bayram2015,Beck2017,Xu2023}. The convergence of ISTA 
 has been well studied in \cite{Bayram2015}, when the penalty term $P(x)$ is
 a $\rho$ weakly convex penalty function, i.e.,  there exist $\rho\geq 0$ such that
$P(\bx)+\frac{\rho}{2}\|\bx\|_2^2$ is convex \cite[Definition 1]{Bayram2015}.
Obviously,  the function $\lambda P_{\sigma}$ is $\rho$-weakly convex function for any $\rho\ge\frac{\lambda}{\sigma^2}$ \cite[Table 1]{Chen2014}. Denote the minimum and maximum eigenvalue of $A^{\top}A$ by $\nu_{\min}(A^{\top}A)$ and $\nu_{\max}(A^{\top}A)$, respectively.  
Associated with  \cite[Proposition 4]{Bayram2015}, when
 $\frac{\lambda}{\sigma^2}\le \rho\le\nu_{\min}(A^{\top}A)$, the optimal solution set of \eqref{PCSOptimization} is non-empty. Moreover, when $0<\mu<\frac{2}{\nu_{\max}(A^{\top}A) +\rho}$, the sequence $\{\bx^l:l\in\bN\}$ in \eqref{ISTAIteration} converges to a minimizer of  \eqref{PCSOptimization}.
With \eqref{ProxDef0}, \eqref{ProxTmulambdasigma}, and the separability of  $P_{\sigma}(\bx)$,
the proximal operator $\prox_{\mu\lambda P_{\sigma}}$ in \eqref{ISTAIteration} takes the following form:
\begin{equation}\label{ProximalPiEMulti}
\begin{array}{l}
\prox_{\mu\lambda P_{\sigma}}(\bx)
\!=\!\big(\prox^{(\sigma)}_{\mu\lambda }(x_i): i=1,2,\dots,n\big)^{\top} \\
\qquad\qquad\quad\!=\!\big({\rm sign}(x_i)\mathcal{T}_{\mu\lambda}^{(\sigma)}(x_i): i=1,2,\dots,n\big)^{\top}
\end{array}
\end{equation}
where $\mathcal{T}_{\mu\lambda}^{(\sigma)}$ is given by Theorem \ref{ProximalCase1} if $\frac{\mu\lambda}{\sigma^2}\le 1$; otherwise  $\mathcal{T}_{\mu\lambda}^{(\sigma)}$ is given by Theorem \ref{Coratau}.
Based on the above arguments, the ISTA for solving problem \eqref{PCSOptimization} is described in Algorithm \ref{AlgoISTA}. For simplicity, we denote the maximum step size for ISTA by
\begin{equation}\label{mumax}
\mu_{\max}:=\frac{2}{\nu_{\max}(A^{\top}A)+\frac{\lambda}{\sigma^2}}.
\end{equation}

\begin{algorithm}
  \caption{ISTA for the problem \eqref{PCSOptimization}}
  \label{AlgoISTA}
  {\bf Input}: Given $\bx^{0}\in\bR^n$, $\varepsilon>0$, {\it maxiter}, $\lambda>0$, $\sigma>0$, $0<\mu<\mu_{\max}$, where $\mu_{\max}$ is given by \eqref{mumax}
  \begin{itemize}
  \setlength{\itemsep}{-0.3ex}
  \item[1.]  {\bf while } $e>\varepsilon$ and $l<\mbox{maxiter}$ {\bf do}
  \item[2.] \quad $l=l+1$
  \item[3.] \quad $\bx^{l+1}=\prox_{\mu\lambda P_{\sigma}}\Big((I_n-\mu A^{\top}A)\bx^l+\mu A^{\top}\bb\Big)$, 
  \item[] \qquad where $\prox_{\mu\lambda P_{\sigma}}$ is given by \eqref{ProximalPiEMulti}
 \item[4.]  \quad $e=\frac{\|\bx^{l+1}-\bx^l\|_2}{1+\|\bx^l\|_2}$
  \item[5.] {\bf end while}
  \end{itemize}
  {\bf Output}: $\bx^{l}$
  \end{algorithm}

\subsection{Numerical experiments}\label{Subsection:Experiments}
In  this subsection, the numerical performance of ISTA given as in Algorithm \ref{AlgoISTA} to solve the optimation \eqref{PCSOptimization} is studied. 
Throughout this section, 
fix $m=128$ and $n=256$ for the sensing matrix $A\in\bR^{m\times n}$. 
Sparsity levels $k\in\{4,8,\dots,60\}$ of a signal $\bx\in\bR^n$ are taken to represent
simple to difficult reconstruction problems. The maximum number of iterations is set to be $\mbox{maxiter}=3000$ and  the tolerance $\varepsilon=10^{-5}$ for ISTA.
For sensing matrix $A$, its columns
are normalized with respect to the $\ell_2$ norm and its coherence is the maximum absolute value of
the cross-correlations between its columns. Incoherent or coherent sensing matrices are produced in numerical experiments.
A random standard Gaussian matrix $A$ was commonly used in the incoherence case.
In the highly coherent case, we consider a randomly oversampled partial discrete cosine transform (DCT) matrix $A$ defined by 
$$
A_{ij}=\frac{1}{\sqrt{m}}\cos\frac{2(j-1)\pi \xi_i}{F},\ i=1,2,\dots,m, j=1,2,\dots,n
$$
where each frequency $\xi_i$ generated from a uniform distribution on $[0,1]$, and $F\in\bN$ is the refinement factor. A normalized DCT matrix $A$ with $F=3$ and $F=10$ respectively corresponds to matrices with moderate and high degrees of coherence. The average mutual coherence of $100$ random Gaussian matrice, DCT matrices with $F=3$ and $F=10$, is $0.37$ (with the standard variance $0.02$), $0.68$ (with the standard variance $0.04$), and  $0.998$ (with the standard variance $0.0016$), respectively.
 The influence of three parameters (the regularization parameter $\lambda$, the shape parameter $\sigma$, and the step size $\mu$) in Algorithm \ref{AlgoISTA} will be discussed for those three types of sensing matrices.

Given a sparse signal $\bx\in\bR^n$ with sparsity $k=60$ and the amplitudes of its nonzero coefficients are uniformly distributed on $[-5,5]$, and the sensing matrix $A$ is an incoherent Gaussian matrix, the reconstruction signal $\hat{\bx}$ by Algorithm \ref{AlgoISTA} to the optimal problem \eqref{PCSOptimization} with $\bb=A\bx$ is presented in Fig. \ref{ExampleCS}.
\begin{figure}[tbp]
\begin{center}
\includegraphics[scale=0.6]{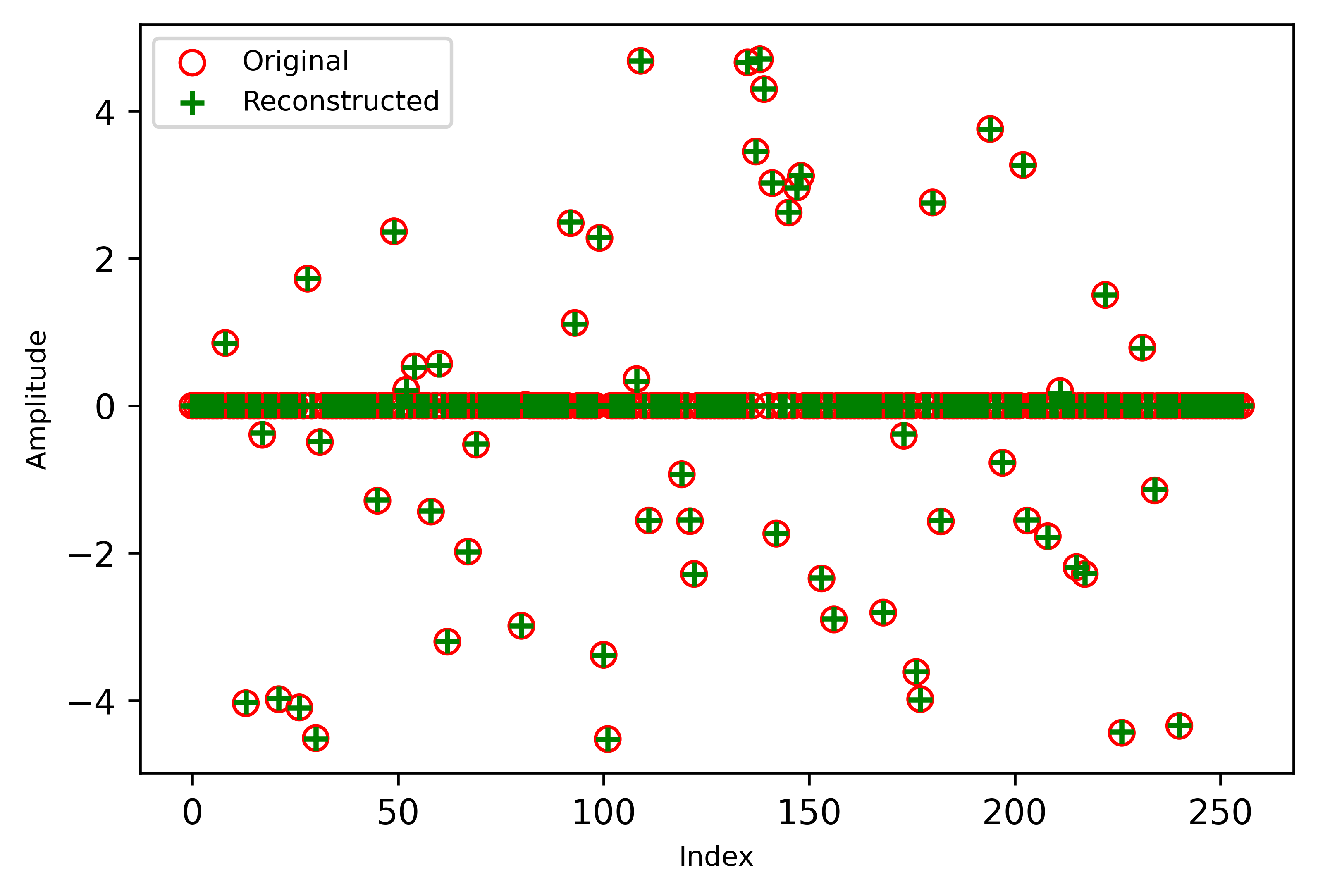}
\caption{An example of reconstruction  by ISTA via PiE with $\lambda=0.01$, $\mu=0.99\mu_{\max}$, $\sigma=0.5$, and $k=60$. The root relative squared error is $0.0042$.}
\label{ExampleCS}
\end{center}
\end{figure}
The reconstruction $\hat{\bx}$  is said to be {\it successful} if the
root relative squared error $\|\hat{\bx}-\bx\|_2/\|\bx\|_2$ is below $0.01$, and the success rate is the percentage of successful recovery after executing $100$ independent trials. 
The following numerical experiments will derive the impact of  parameters on success rate. First,  fixing $\lambda=0.01$, the success rate of ISTA using different $\sigma\in\{1/100,1/50,1/20,1/10,1/5,1/2,1,2\}$ is given in Fig. \ref{Fig5} with $\mu=0.5\mu_{\max}$ and in Fig. \ref{Fig6} with $\mu=0.99\mu_{\max}$.
The best overall success rate of ISTA can be obtained when $\sigma=1/2$. 
Second, fix $\sigma=1/2$ and let $\mu\in\{0.5\mu_{\max},0.99\mu_{\max}\}$.
Fig. \ref{Fig7} and Fig. \ref{Fig8}  show that the best regularization parameter $\lambda$ is $1/100$ among the values $\{1/10,1/20,1/100,1/200,1/1000\}$. Lastly,  fixing $\sigma\!=\!1/2$ and $\lambda\!=\!1/100$. Fig. \ref{Fig9} demonstrates that the larger the step size $\mu$, the better the success rate of ISTA. The best success rate of ISTA achieves when $\mu\!=\!0.99\mu_{\max}$ for three types of aforementioned sensing matrices. The paper \cite{Bayram2015} demonstrated numerically that larger step size $\mu$ leads to faster convergence of ISTA.

\begin{figure*}[htbp]
\centering
\subfloat[]{\includegraphics[width=2.3in]{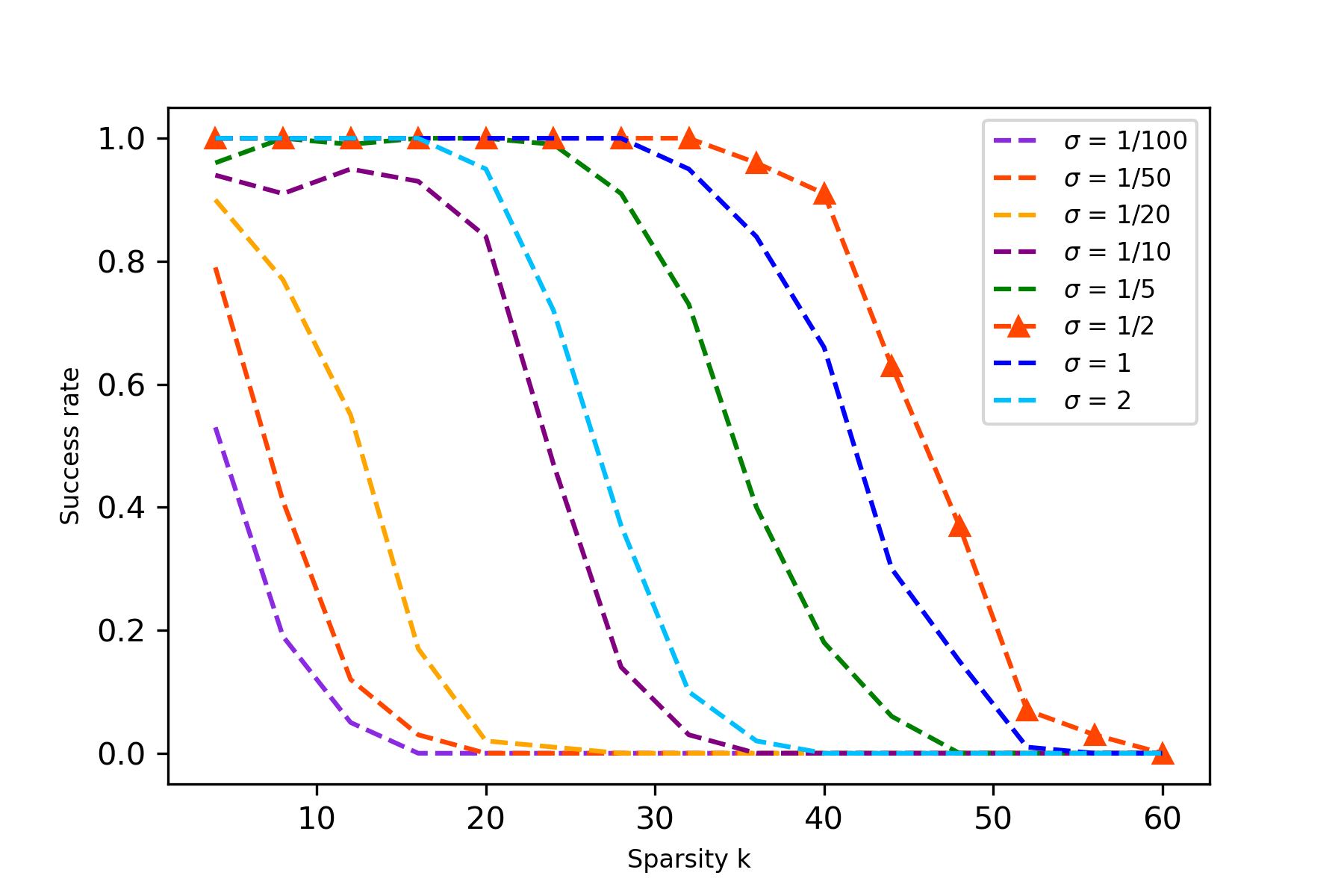}}
\hfil
\subfloat[]{\includegraphics[width=2.3in]{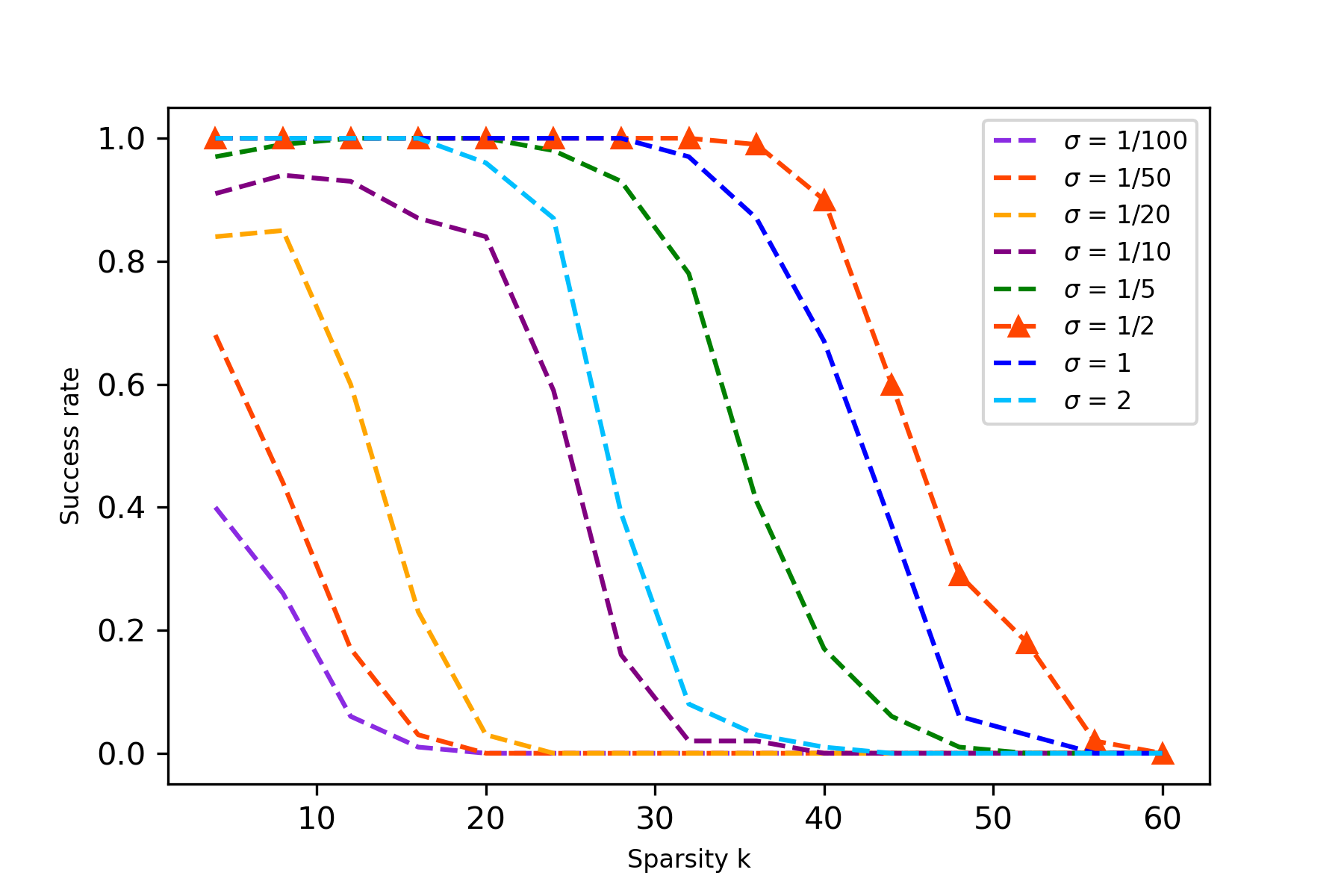}}
\hfil
\subfloat[]{\includegraphics[width=2.3in]{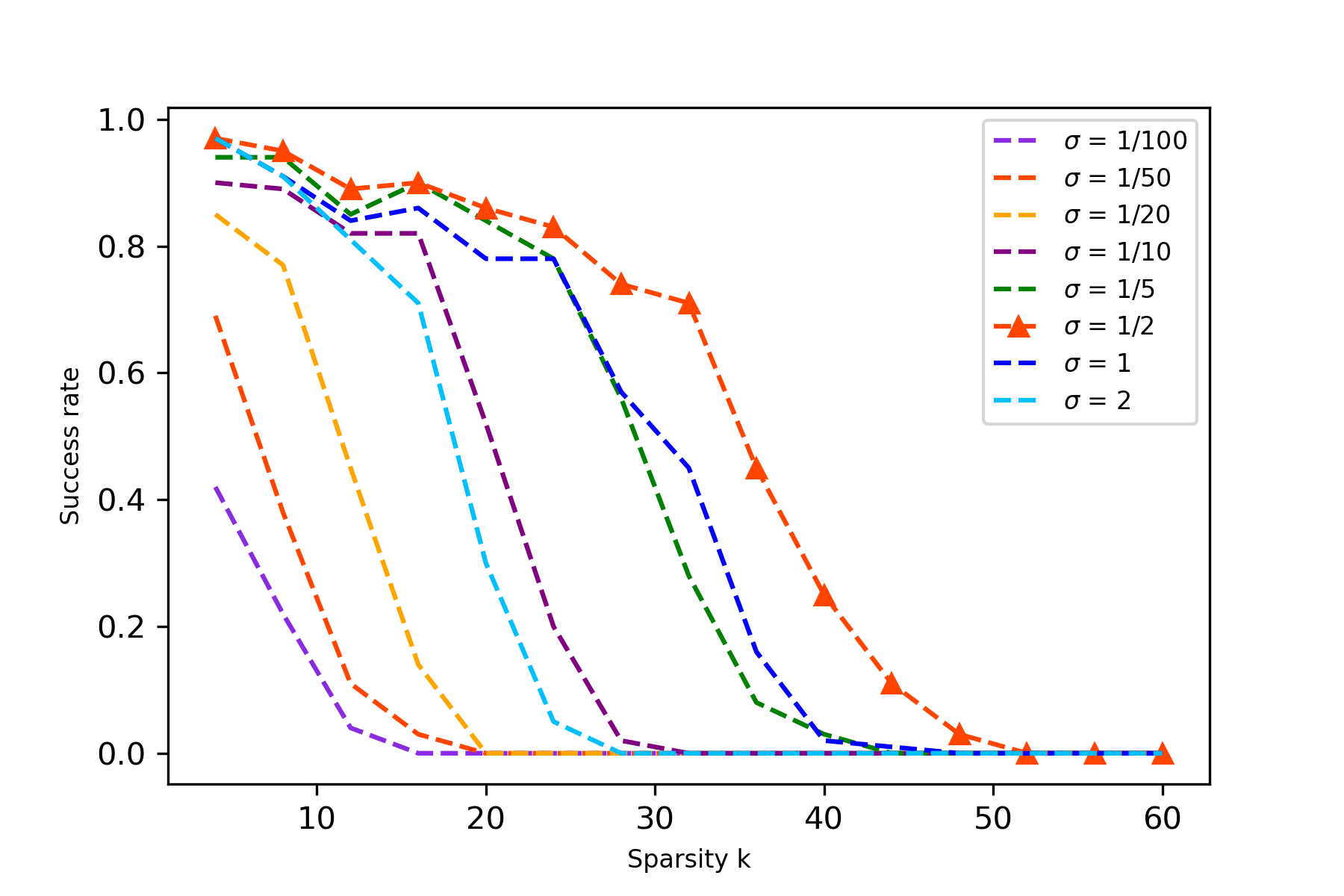}}
\hfil
\caption{Success rate of ISTA using different $\sigma$ with $\mu = 0.5\mu_{max}$ and $\lambda=1/100$ via PiE. (a) Gaussian matrix; (b) DCT matrix with $F=3$; (c) DCT matrix with $F=10$.}
\label{Fig5}
\end{figure*}

\begin{figure*}[htbp]
\centering
\subfloat[]{\includegraphics[width=2.3in]{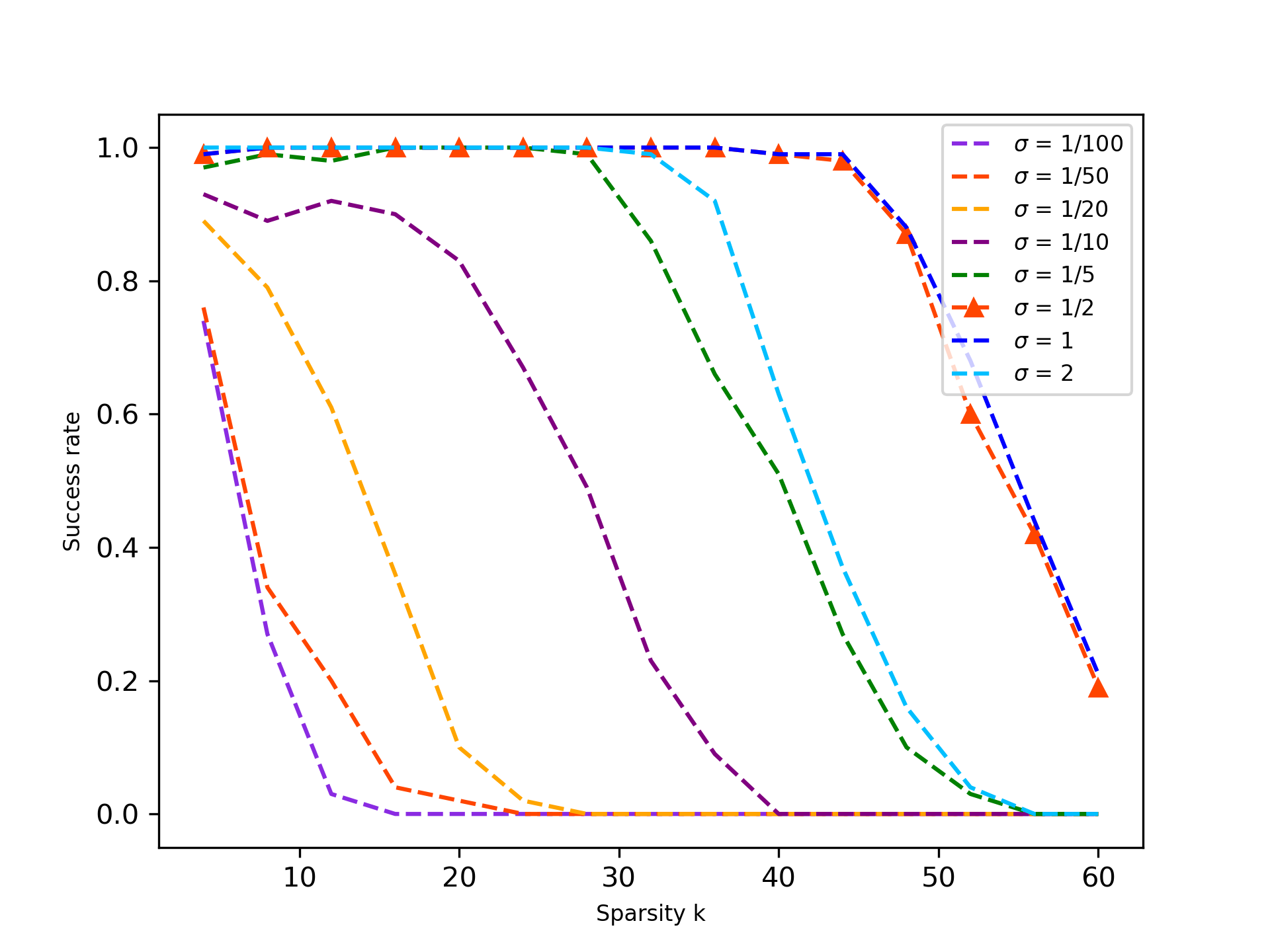}}
\hfil
\subfloat[]{\includegraphics[width=2.3in]{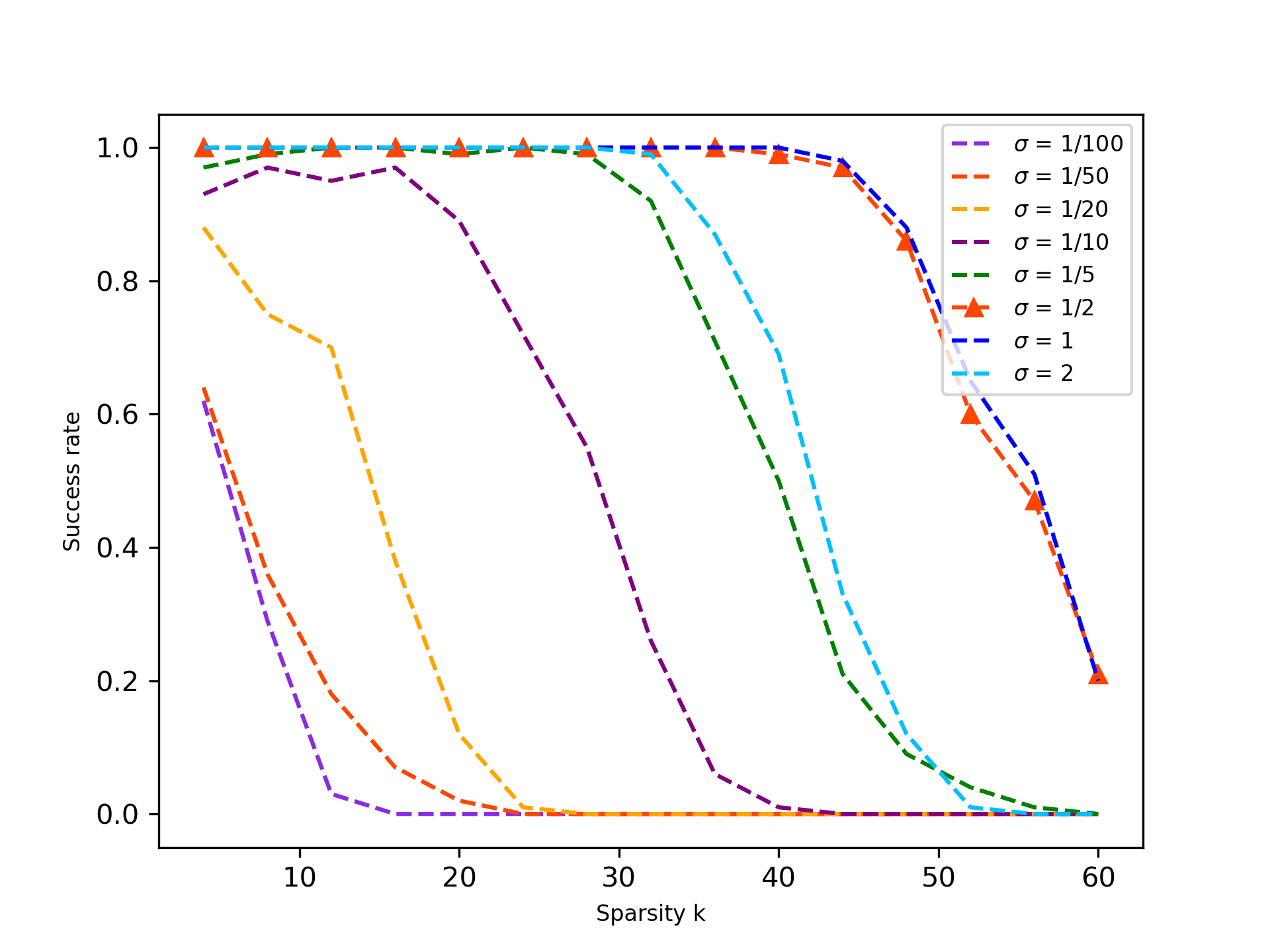}}
\hfil
\subfloat[]{\includegraphics[width=2.3in]{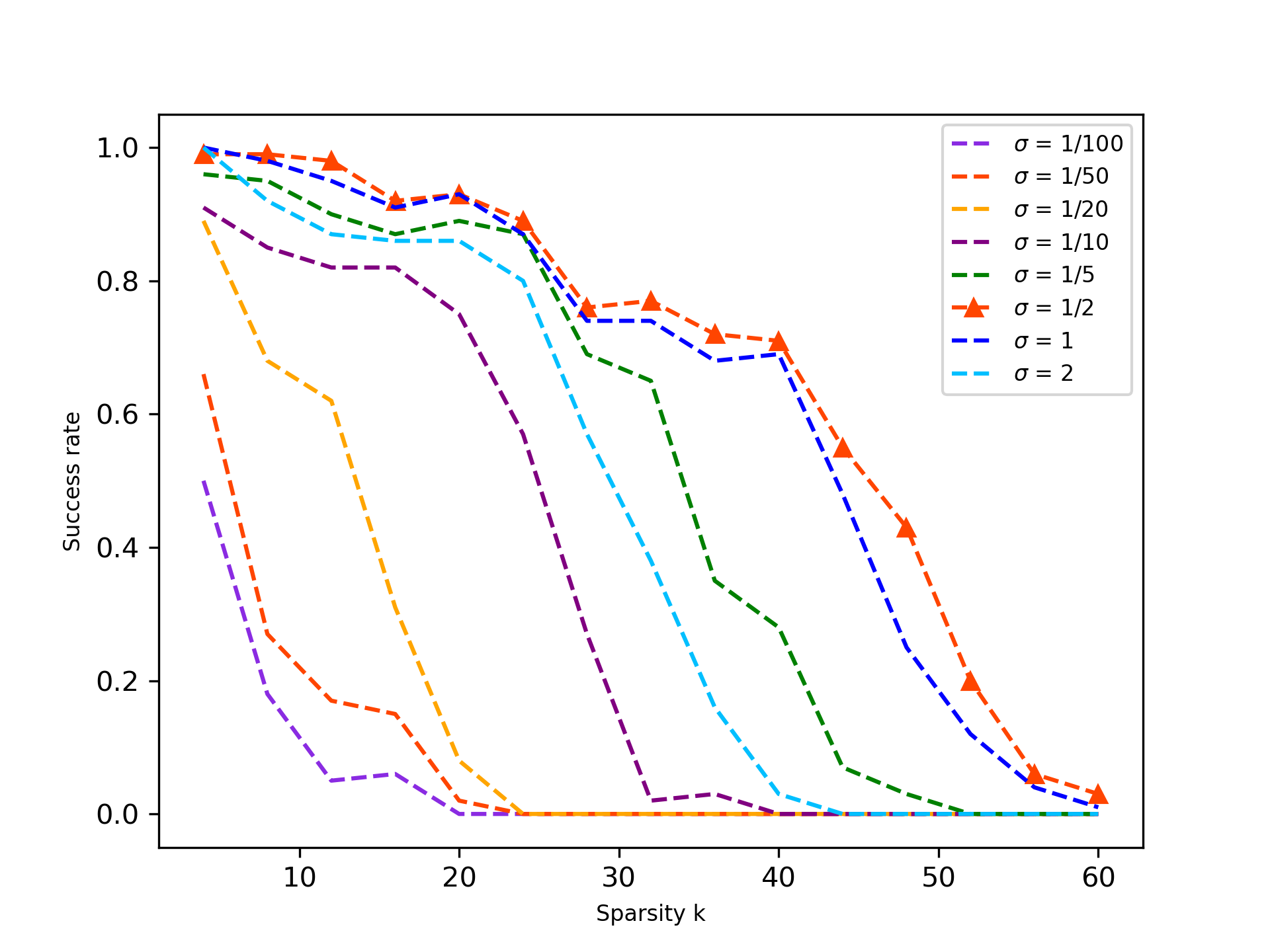}}
\hfil
\caption{Success rate of ISTA using different $\sigma$ with $\mu = 0.99\mu_{max}$ and $\lambda=1/100$ via PiE. (a) Gaussian matrix; (b) DCT matrix with $F=3$; (c) DCT matrix with $F=10$.}
\label{Fig6}
\end{figure*}

\begin{figure*}[htbp]
\centering
\subfloat[]{\includegraphics[width=2.3in]{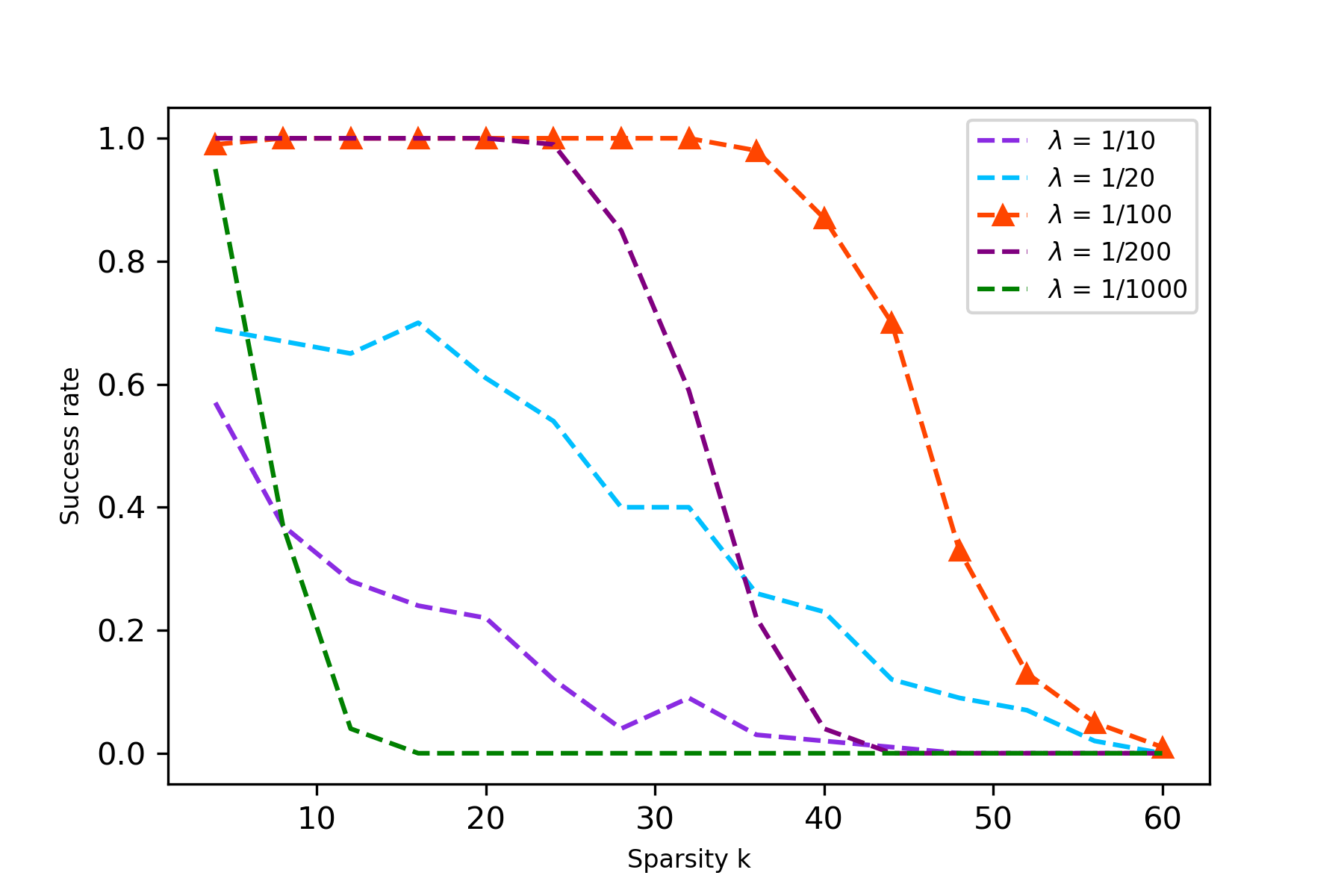}}
\hfil
\subfloat[]{\includegraphics[width=2.3in]{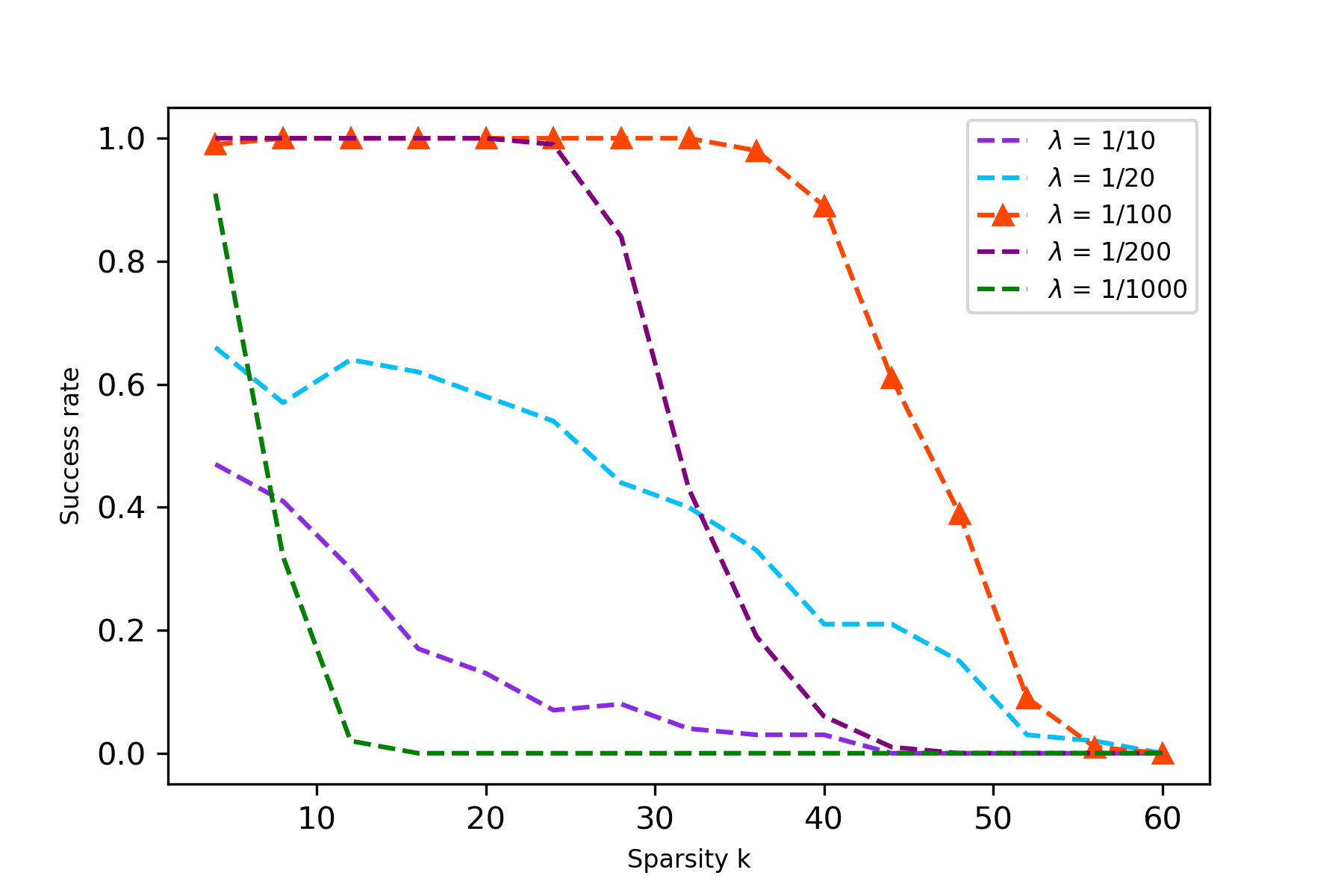}}
\hfil
\subfloat[]{\includegraphics[width=2.3in]{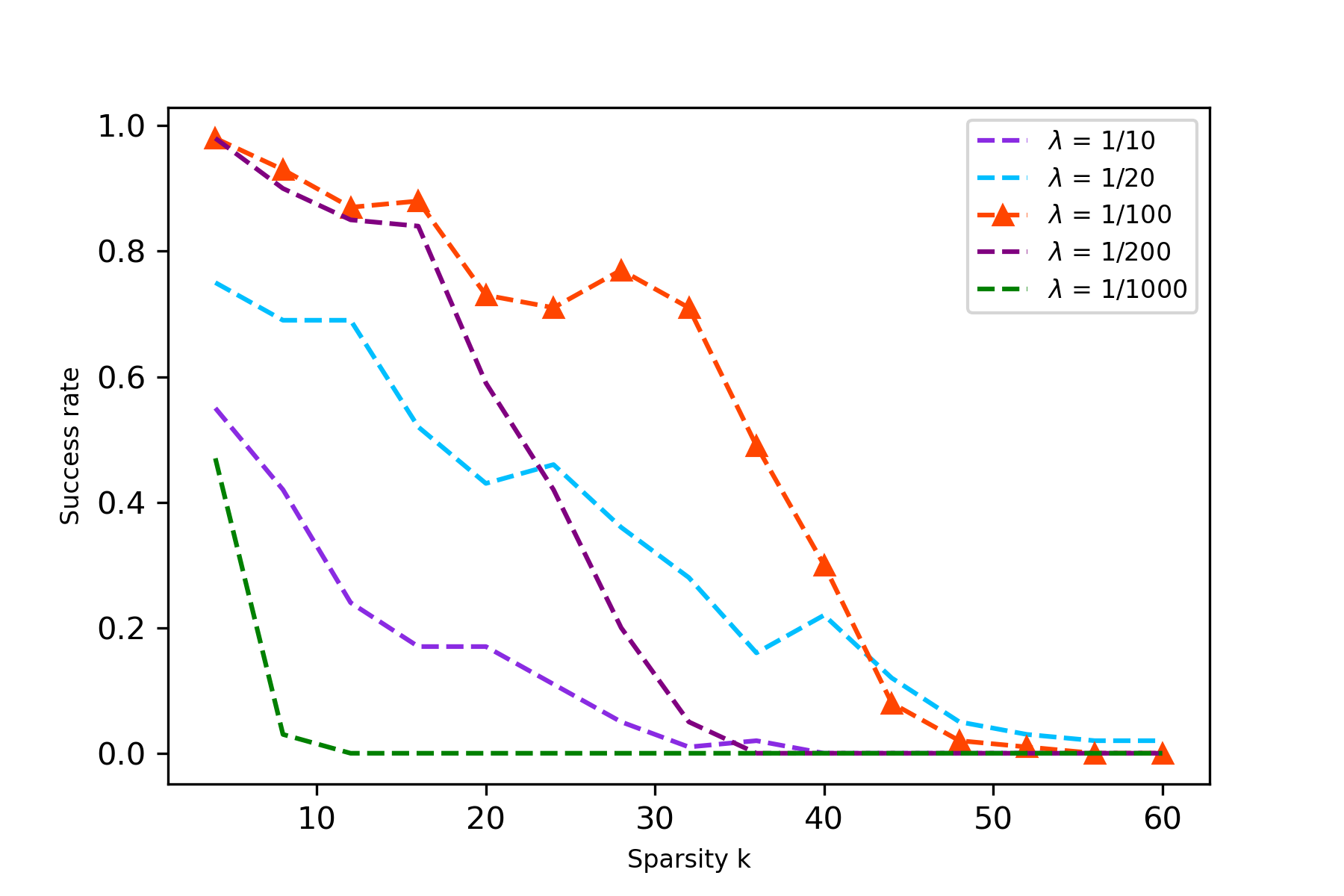}}
\hfil
\caption{Success rate of ISTA using different  $ \lambda$  with $\sigma=1/2$ and $\mu = 0.5\mu_{max}$ via PiE. (a) Gaussian matrix; (b) DCT matrix with $F=3$; (c) DCT matrix with $F=10$.}
\label{Fig7}
\end{figure*}

\begin{figure*}[htbp]
\centering
\subfloat[]{\includegraphics[width=2.3in]{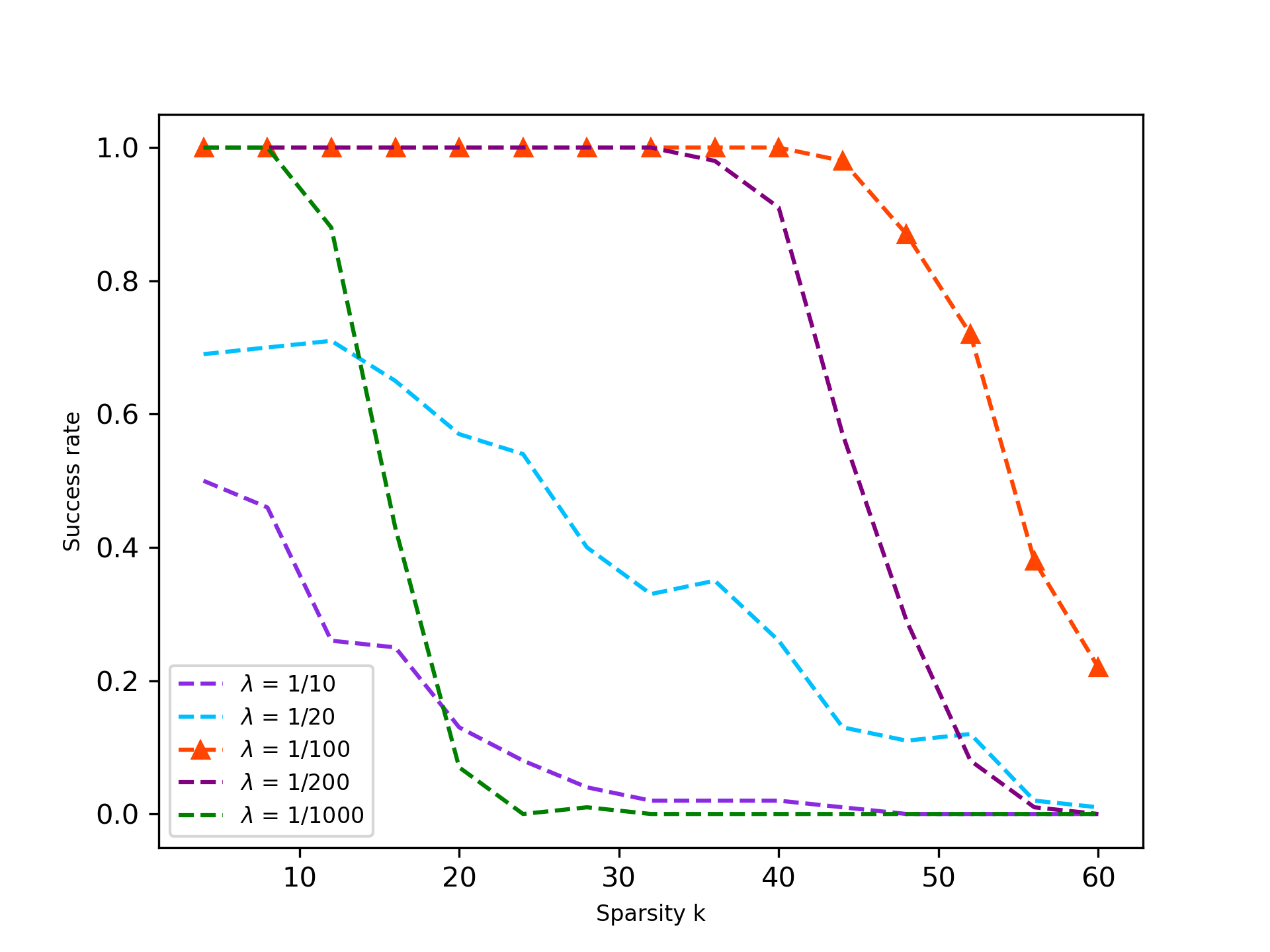}}
\hfil
\subfloat[]{\includegraphics[width=2.3in]{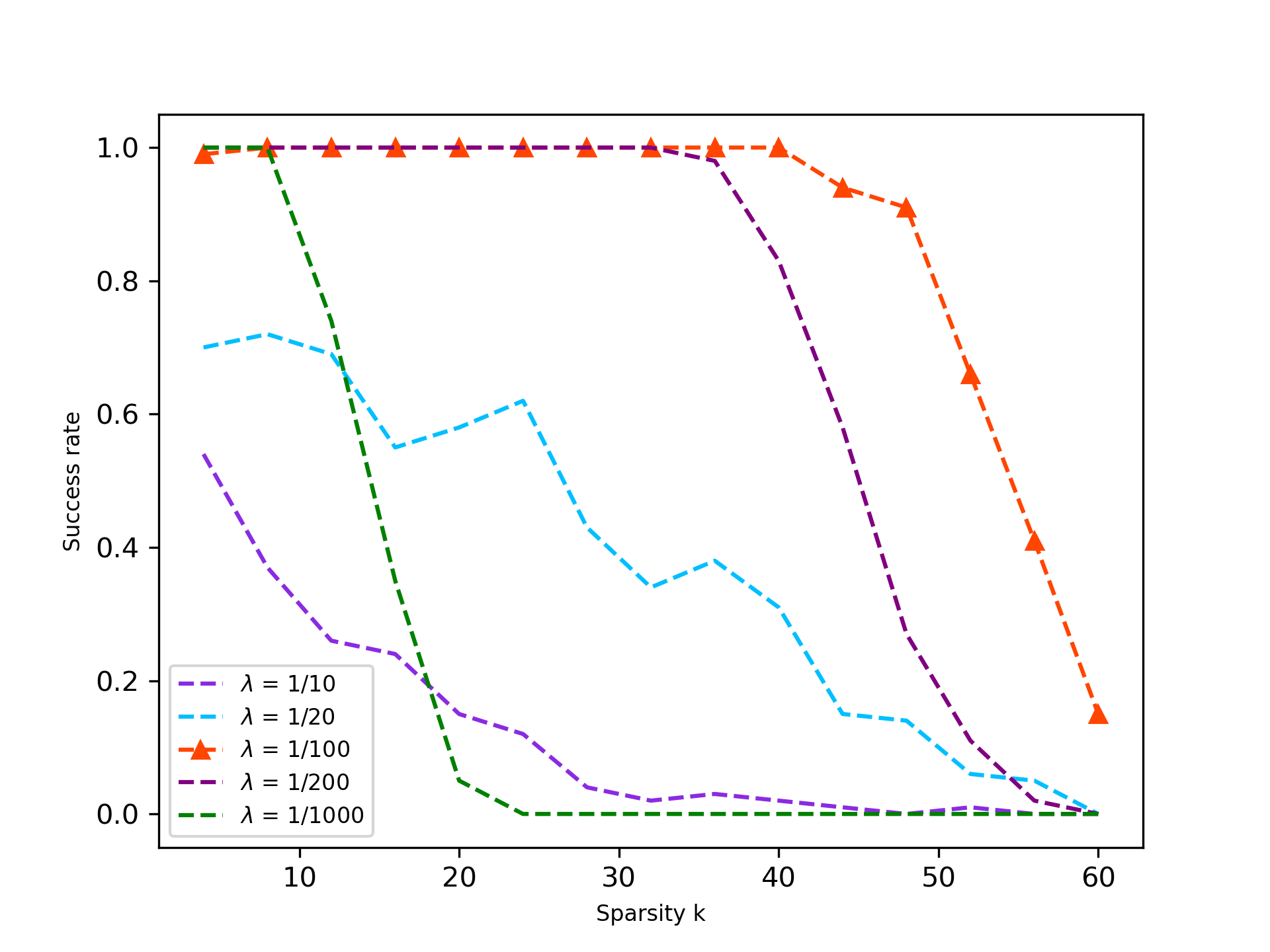}}
\hfil
\subfloat[]{\includegraphics[width=2.3in]{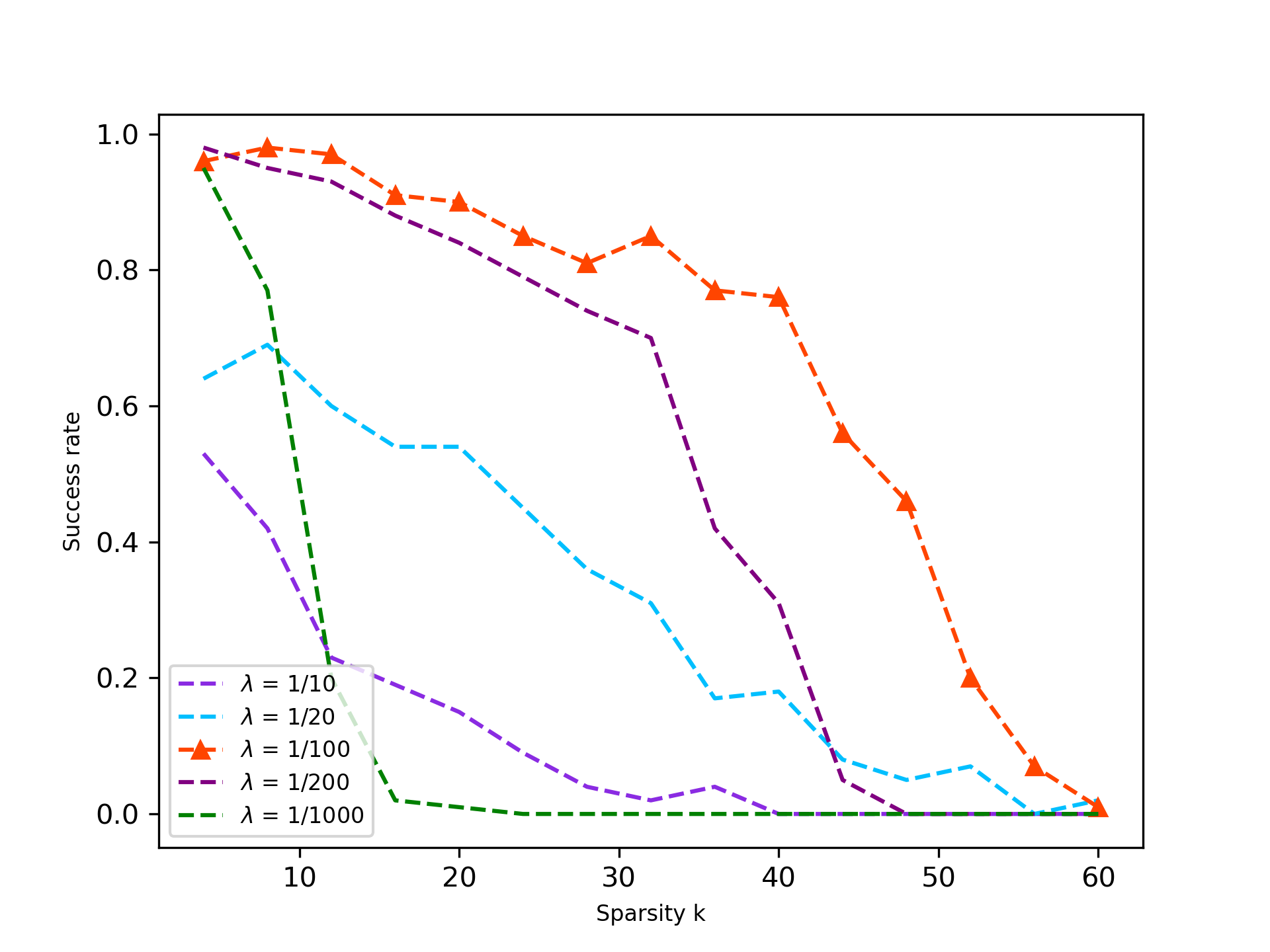}}
\hfil
\caption{Success rate of ISTA using different  $ \lambda$  with $\sigma=1/2$ and $\mu = 0.99\mu_{max}$  via PiE. (a) Gaussian matrix; (b) DCT matrix with $F=3$; (c) DCT matrix with $F=10$.}
\label{Fig8}
\end{figure*}

\begin{figure*}[htbp]
\centering
\subfloat[]{\includegraphics[width=2.3in]{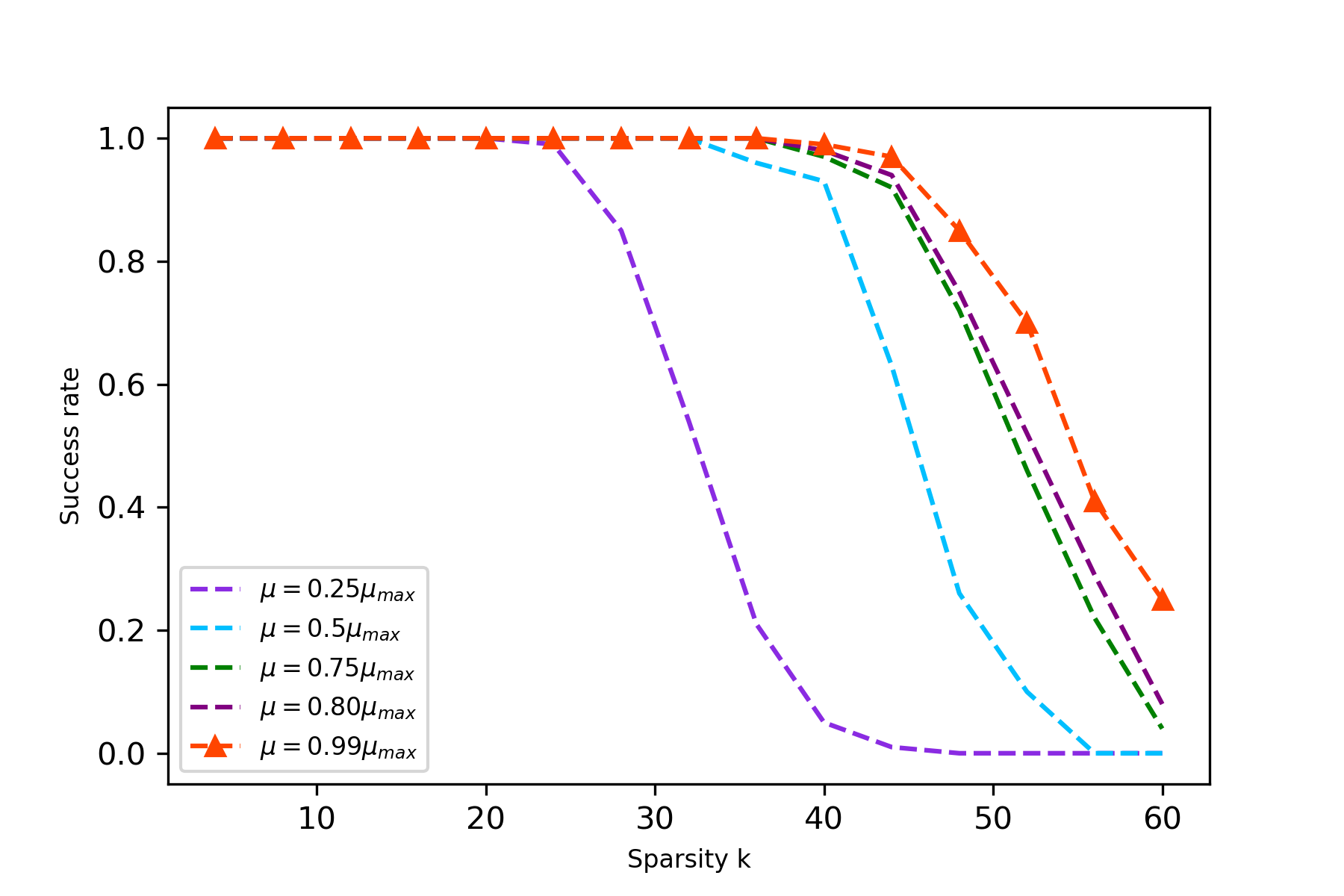}}
\hfil
\subfloat[]{\includegraphics[width=2.3in]{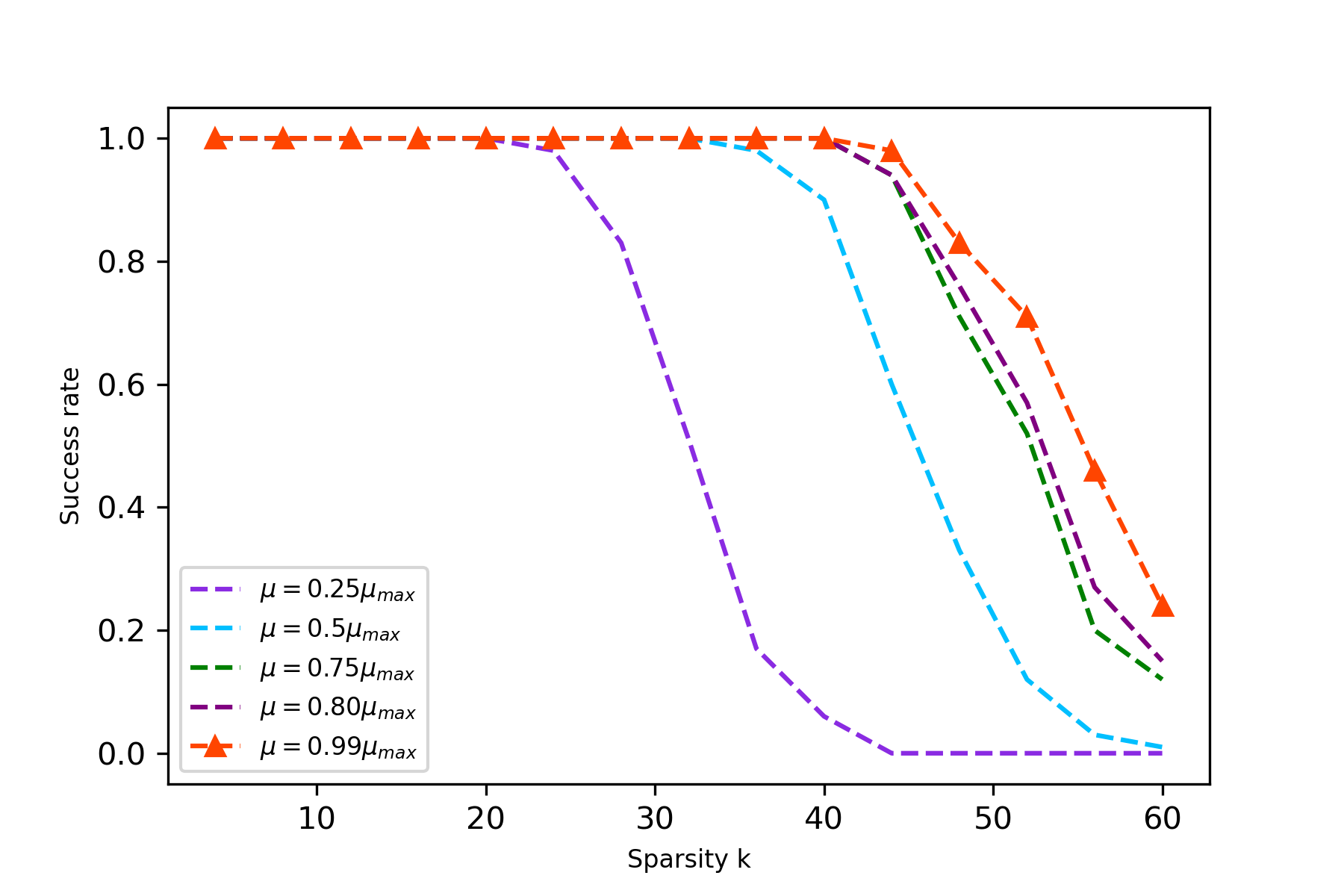}}
\hfil
\subfloat[]{\includegraphics[width=2.3in]{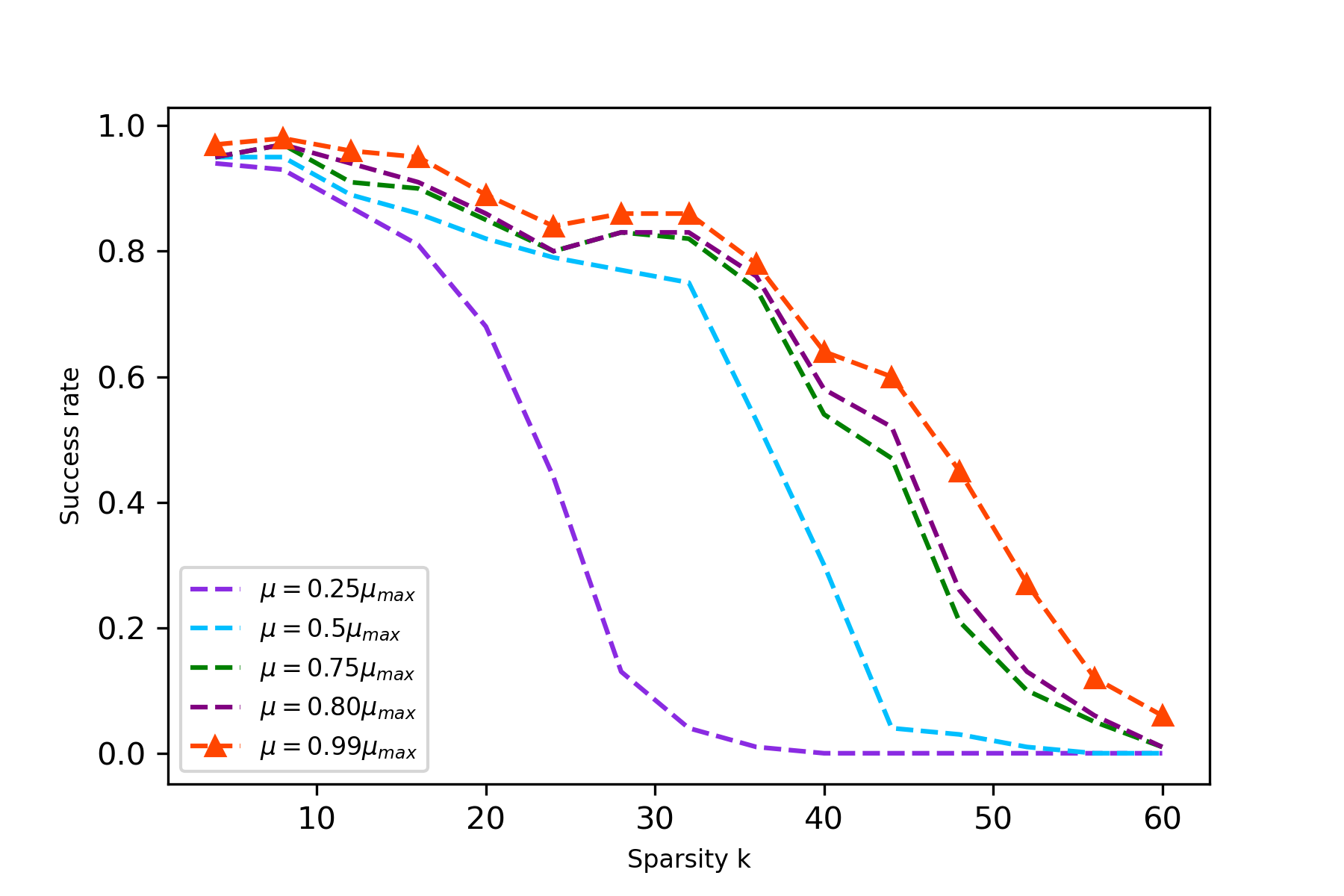}}
\hfil
\caption{Success rate of ISTA using different  $\mu\in\{0.25\mu_{max},0.5\mu_{max},0.75\mu_{max},0.8\mu_{max},0.99\mu_{max}\}$  with $\sigma = 1/2 ,  \lambda = 1/100 $ via PiE. (a) Gaussian matrix; (b) DCT matrix with $F=3$; (c) DCT matrix with $F=10$.}
\label{Fig9}
\end{figure*}

\subsection{ Comparative analysis of various penalty functions}\label{Subsection:compare}

We will compare the reconstruction success rate and time
efficiency of ISTA with PiE with other popular separable $\ell_0$-norm surrogate penalties. 
All the experiments below are run on a PC with Intel i5 CPU 2.5 GHz and 16 GB RAM, the numerical results are obtained via Jupyter Notebook 6.4.12.

As the proximal operator of the $\ell_0$-norm, the $\ell_{1/2}$-norm, and the $\ell_1$-norm correspond to the hard thresholding function, the half thresholding function, and the soft thresholding function, respectively, we follow the notations in \cite{Xu2012,Xu2023} to denote the $\ell_0$-norm, the $\ell_{1/2}$-norm, and the $\ell_1$-norm by hard, half, and soft, respectively. For comparison, penalties will be tested including PiE, soft, hard, half, SCAD, MCP, Log, TL1, and CaP.
For any given $x_0\in \bR$, 
the expressions of them and corresponding proximal operators can be found in TABLE \ref{Table2}. The regularization parameter $\lambda$, the shape parameter ($\sigma$ or $a$) of various penalties (if applicable), and the maximum step size $\mu_{\max}$ are specified for ISTA in TABLE \ref{Table3}, where $\nu_{\max}$ stands for $\nu_{\max}(A^{\top}A)$ for simplicity. The value $\nu_{\max}$ for random Gaussian matrix, DCT matrix with $F=3$, and DCT matrix with $F=10$, is approximately $5.62$ with the standard variance $0.13$, $5.68$ with the standard variance $0.15$, and $7.70$ with the standard variance $0.24$, respectively.

Let us explain the choices of those parameters in TABLE \ref{Table3}.
The shape parameter $a=3.7$ was suggested for SCAD and MCP in \cite{Fan2001,Xu2023} and $a\in\{0.1,0.5\}$ was used for Log in  \cite{Prater2022}.
Remember that for a $\rho$-weakly convex penalty, the maximum step size of ISTA is $\mu_{\max}=\frac{2}{\nu_{\max} +\rho}$. The minimum values $\rho$ for $\rho$-weakly convex penalties PiE, soft, SCAD, MCP, Log, TL1 in TABLE \ref{Table3} to be convex, are $\frac{\lambda}{\sigma^2}$, $0$, $\frac{1}{a-1}$, $\frac{1}{a}$, $\frac{\lambda}{a^2}$, and $\frac{2(a+1)\lambda}{a^2}$, respectively. 
Compared to $\nu_{\max}$, the aforementioned $\rho$ is generally a relatively small constant, which implies that $\mu_{\max}$ is just slightly less than $\frac{2}{\nu_{\max}}$. Notice that penalties including hard, half, and CaP are not weakly convex. For those penalties, the maximum step size in our experiments is set to be $\frac{2}{\nu_{\max}}$. Notice that $\mu$ in the ISTA is normally taken as in the interval $(0,\frac{1}{\nu_{\max}})$ in the literature, see, for instance, in \cite[(48)]{Xu2012} for half, in \cite[Algorithm 2]{Chen2016} for the $\ell_p$-norm, in \cite[Algorithm 1]{Prater2022} for Log, in \cite[Theorem 4.1]{Zhang2017} for TL1, and in \cite[Theorem 3]{Lu2015} for a general penalties.

The success rate and the convergence time of ISTA with nine penalties for three different types of sensing matrices are illustrated in Fig. \ref{Fig10} and Fig. \ref{Fig11} for the two step sizes $\mu=0.5\mu_{\max}$ and  $\mu=0.99\mu_{\max}$, respectively. All $\mu_{\max}$ of various penalties were listed in TABLE \ref{Table3}. As shown in Fig. \ref{Fig10} with $\mu=0.5\mu_{\max}$, Log (deep sky blue dashed line) and PiE (orangered dashed line with triangle down marker) rank first and second among the nine penalties. 
Using a large step size of $\mu=0.99\mu_{\max}$, Fig. \ref{Fig11} shows PiE and Log rank first and second among the nine penalties. As illustrated in the left columns of Fig. \ref{Fig10} and Fig. \ref{Fig11}, the large step size $\mu=0.99\mu_{\max}$ will significantly increase the success rate of ISTA with $\rho$-weakly convex penalties PiE and Log  whenever the sparsity levels are in the interval $[40,60]$.
Additionally, the large step size slightly improves the success rate of ISTA with SCAD (purple dashed line), MCP (green dashed line), and half (orange dashed line).
The convergence time of various penalties is very close, as depicted in the right columns of Fig. \ref{Fig10} and Fig. \ref{Fig11}.

\begin{table}[!hbt]
\centering
\setlength{\tabcolsep}{0pt}
\caption{List of some $\ell_0$-norm surrogate penalties and their proximal operators}
\label{Table2}
\resizebox{\linewidth}{!}{
\begin{tabular}{|c|c|c|} \hline
Penalties & Formulas  & Proximal operators\\ \hline
PiE & $\lambda (1-e^{-\frac{|x|}{\sigma}})$ & Theorems \ref{ProximalCase1} and \ref{Coratau} \\ \hline
soft &  $\lambda |x|$    & $\left\{\begin{array}{ll} \{0\}, & \mbox{ if }|x_0|\le \lambda\\
     \{x_0-\sign(x_0)\lambda\}, & \mbox{ if }|x_0|>\lambda\\
 \end{array}
 \right.$ \\ \hline
hard &  $\lambda |x|_0$ & $\left\{\begin{array}{ll}
    \{0\}, & \mbox{ if }|x_0|<\sqrt{2\lambda}\\
    \{0,x_0\}, & \mbox{ if }|x_0|=\sqrt{2\lambda}\\
    \{x_0\}, & \mbox{ if }|x_0|>\sqrt{2\lambda}\\
\end{array}
\right.
$\\ \hline
half &  $\lambda |x|^{\frac12}$ & $\left\{\begin{array}{ll}
    \{0\}, & \mbox{if }  |x_0|<\frac{3}{2}\lambda^{\frac{2}{3}}\\
    \{0,\sign(x_0) \lambda^{\frac{2}{3}}\}, & \mbox{if }  |x_0|=\frac{3}{2}\lambda^{\frac{2}{3}} \\
   \big\{\frac{2}{3}x_0\big(1\!+\!\cos(\frac{2}{3}\arccos(\!-\frac{3^{\frac{3}{2}}}{4}\lambda|x_0|^{-\frac{3}{2}}))\big)\big\},  & \mbox{if }  |x_0|>\frac{3}{2}\lambda^{\frac{2}{3}}\\
\end{array}
\right.
$\\ \hline
SCAD & $\left\{\begin{array}{ll}
  \lambda|x|,   & \mbox{ if }|x|\le \lambda  \\
  \frac{-x^2+2a\lambda |x|-\lambda^2}{2(a-1)}, \!  & \mbox{ if }\lambda<|x| \le a\lambda\\
  \frac{(a+1)\lambda^2}{2},  & \mbox{ if }|x|>a\lambda
\end{array}
\right.$   & $\left\{\begin{array}{ll}
    \{\sign(x_0)(|x_0|-\lambda)_+\}, & \mbox{ if }|x_0|\le 2\lambda \\
    \{\frac{(a-1)x_0-\sign(x_0)a\lambda}{a-2}\}, &\mbox{ if }2\lambda<|x_0|\le a\lambda\\
    \{x_0\},&\mbox{ if }|x_0|\ge a\lambda
\end{array}
\right.$ ($a>2$)\\ \hline
MCP & $\left\{\begin{array}{ll}
\lambda |x|-\frac{x^2}{2a}, & \mbox{ if }|x|\le a\lambda\\
\frac{a\lambda^2}{2}, & \mbox{ if }|x|> a\lambda
\end{array}
\right.$    & $\left\{\begin{array}{ll}
\{\frac{\sign(x_0)(|x_0|-\lambda)_+}{1-1/a}\}, &\mbox{ if }|x_0|\le a\lambda\\
\{x_0\}, & \mbox{ if }|x_0|> a\lambda
\end{array}
\right.$ ($a>1$)\\ \hline
Log & $\lambda\log\big(1+ \frac{|x|}{a}\big)$   & $\left\{\begin{array}{ll}
\{0\}, & \mbox{ if } |x_0|\le \frac{\lambda}{a}\\
\{\sign(x_0)r(|x_0|)\}, & \mbox{ if } |x_0|> \frac{\lambda}{a}
\end{array}
\right.$ ($\sqrt{\lambda}\le a$) \\ 
 && $\left\{\begin{array}{ll}
\{0\}, & \mbox{ if }|x_0|< \bar{x}_0\\
\{0,\sign(\bar{x}_0)r(\bar{x}_0)\}, &
 \mbox{ if } |x_0|=\bar{x}_0\\
\{\sign(x_0)r(x_0)\}, & \mbox{ if } |x_0|>\bar{x}_0
\end{array}
\right.$ ($\sqrt{\lambda}>a$)\\ 
&& where $r(x_0):=\frac{x_0-a}{2}+\sqrt{\frac{(x_0+a)^2}{4}-\lambda}$ for any $x_0\ge\max\{2\sqrt{\lambda}-a,0\}$ \\
& &$\bar{x}_0\in[2\sqrt{\lambda}-a,\frac{\lambda}{a}]$ is the only root of $\frac{1}{2}r^2(x_0)-x_0 r(x_0)+\lambda\log(1+\frac{|r(x_0)|}{a})=0$\\ \hline

TL1 &  $\lambda\frac{(a+1)|x|}{a+|x|}$  & $\left\{\begin{array}{ll}
\{0\},& |x_0|\le\frac{\lambda(a+1)}{a}\\
\{\sign(x_0)g_{\lambda}(|x_0|)\}, & \mbox{ otherwise}
\end{array}
\right.$ ($\lambda\le \frac{a^2}{2(a+1)}$)\\ 
&  & $\left\{\begin{array}{ll}
\{0\},& |x_0|< \sqrt{2\lambda(a+1)}-\frac{a}{2}\\
\sign(x_0)\{0,\sqrt{2\lambda(a+1)}-a\},& |x_0|= \sqrt{2\lambda(a+1)}-\frac{a}{2}\\
\{\sign(x_0) g_{\lambda}(|x_0|)\}, & \mbox{ otherwise}
\end{array}
\right.$ ($\lambda>\frac{a^2}{2(a+1)}$) \\
&& where $g_{\lambda}(x_0)=\frac{2}{3}(a+|x_0|)\cos\big(\frac13 \arccos\big(1-\frac{27\lambda a(a+1)}{2(a+|x_0|)^3}\big)\big)-\frac{2a}{3}+\frac{|x_0|}{3}$\\ \hline
CaP &  $\lambda\min\{|x|,a\}$  & $\left\{\begin{array}{ll}
\{0\}, &  \mbox{ if } |x_0|<\lambda\\
\{x_0-\sign(x_0)\lambda\}, &\mbox{ if } \lambda \le |x_0|<a+\frac{\lambda}{2}\\
\{x_0,x_0-\sign(x_0)\lambda\}, & \mbox{ if } |x_0|=a+\frac{\lambda}{2}\\
\{x_0\}, & \mbox{ if } |x_0|>a+\frac{\lambda}{2}
\end{array}
\right.$  ($0<\lambda<2a$)\\  
& &$\left\{\begin{array}{ll}
\{0\}, &\mbox{ if }  |x_0|<\sqrt{2a\lambda}\\
\{0,x_0\}, &\mbox{ if } |x_0|=\sqrt{2a\lambda}\\
\{x_0\}, & \mbox{ if } |x_0|>\sqrt{2a\lambda}
\end{array}
\right.$ ($\lambda>2a$) \\  \hline
\end{tabular}}
\end{table}

\begin{table}[!hbt]
\centering
\caption{Parameters for various penalties in ISTA}
\label{Table3}
\resizebox{\linewidth}{!}{
\begin{tabular}{|c|c|c|c|c|c|c|c|c|c|} \hline
Penalties & PiE & soft & hard & half & SCAD & MCP & Log & TL1 & CaP\\ \hline
 &$\lambda=0.01$  &$\lambda=0.001$  &$\lambda=0.05$  &$\lambda=0.05$  &$\lambda=0.05$  &$\lambda=0.05$   &$\lambda=0.001$ &$\lambda=0.001$   &$\lambda=0.001$\\  
Parameters  &$\sigma=0.5$  & NA & NA & NA &$a=3.7$  &$a=3.7$  &$a=0.1$  &$a=2$   &$a=1$\\  
           &$\mu_{\max}\!=\!\frac{2}{\nu_{\max}+\frac{\lambda}{\sigma^2}}$  & $\mu_{\max}\!=\!\frac{2}{\nu_{\max}}$  & $\mu_{\max}\!=\!\frac{2}{\nu_{\max}}$ & $\mu_{\max}\!=\!\frac{2}{\nu_{\max}}$ &$\mu_{\max}\!=\!\frac{2}{\nu_{\max}+\frac{1}{a-1}}$  & $\mu_{\max}\!=\!\frac{2}{\nu_{\max}+\frac{1}{a}}$ &$\mu_{\max}\!=\!\frac{2}{\nu_{\max}+\frac{\lambda}{a^2}}$ &$\mu_{\max}\!=\!\frac{2}{\nu_{\max}+\frac{2(a+1)\lambda}{a^2}}$ &$\mu_{\max}\!=\!\frac{2}{\nu_{\max}}$\\ \hline
\end{tabular}}
\end{table}

\begin{figure*}[htbp]
\centering
\subfloat[]{\begin{minipage}[b]{0.3\textwidth}
\includegraphics[scale=0.36]{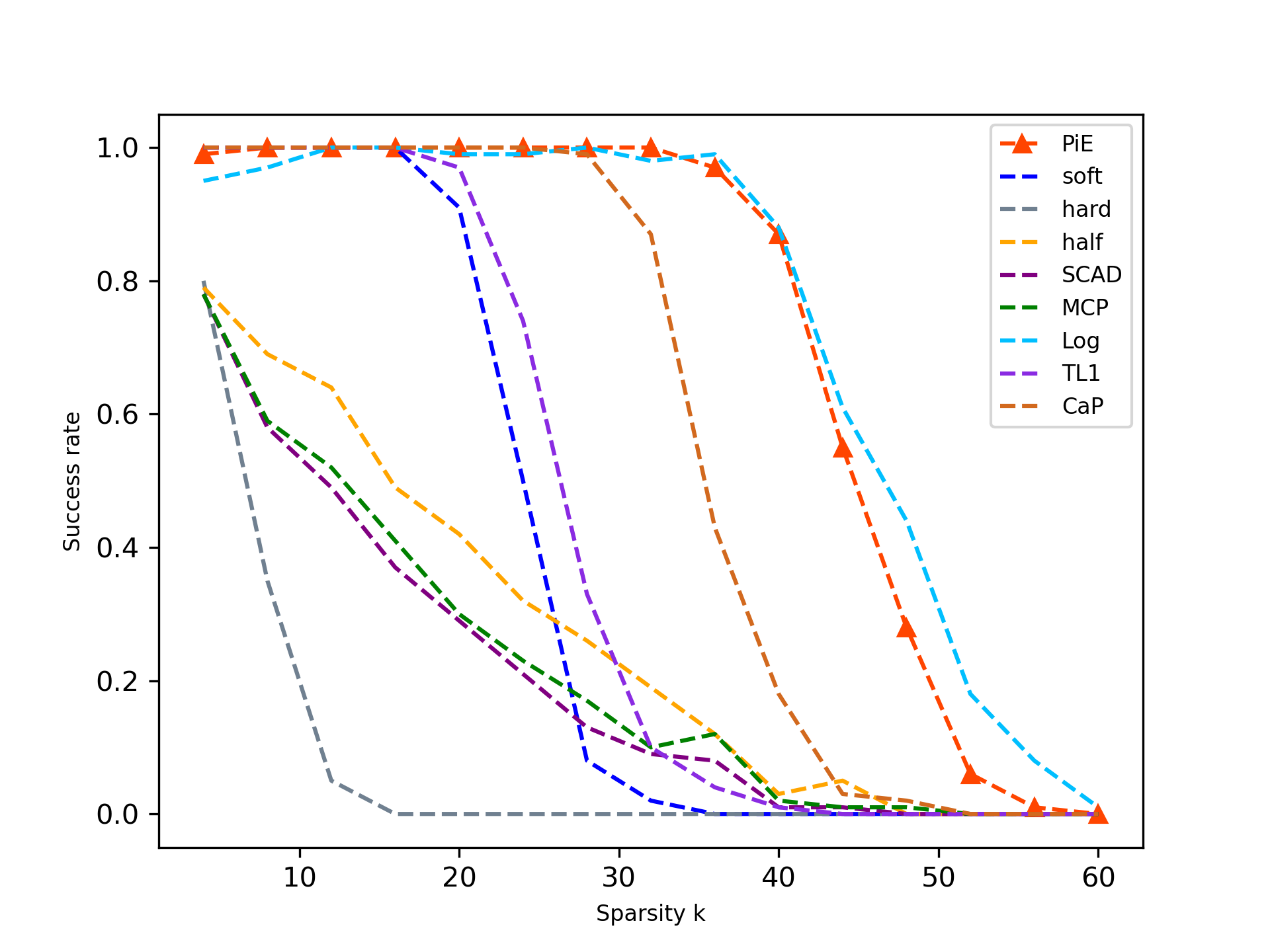}\\
         \includegraphics[scale=0.36]{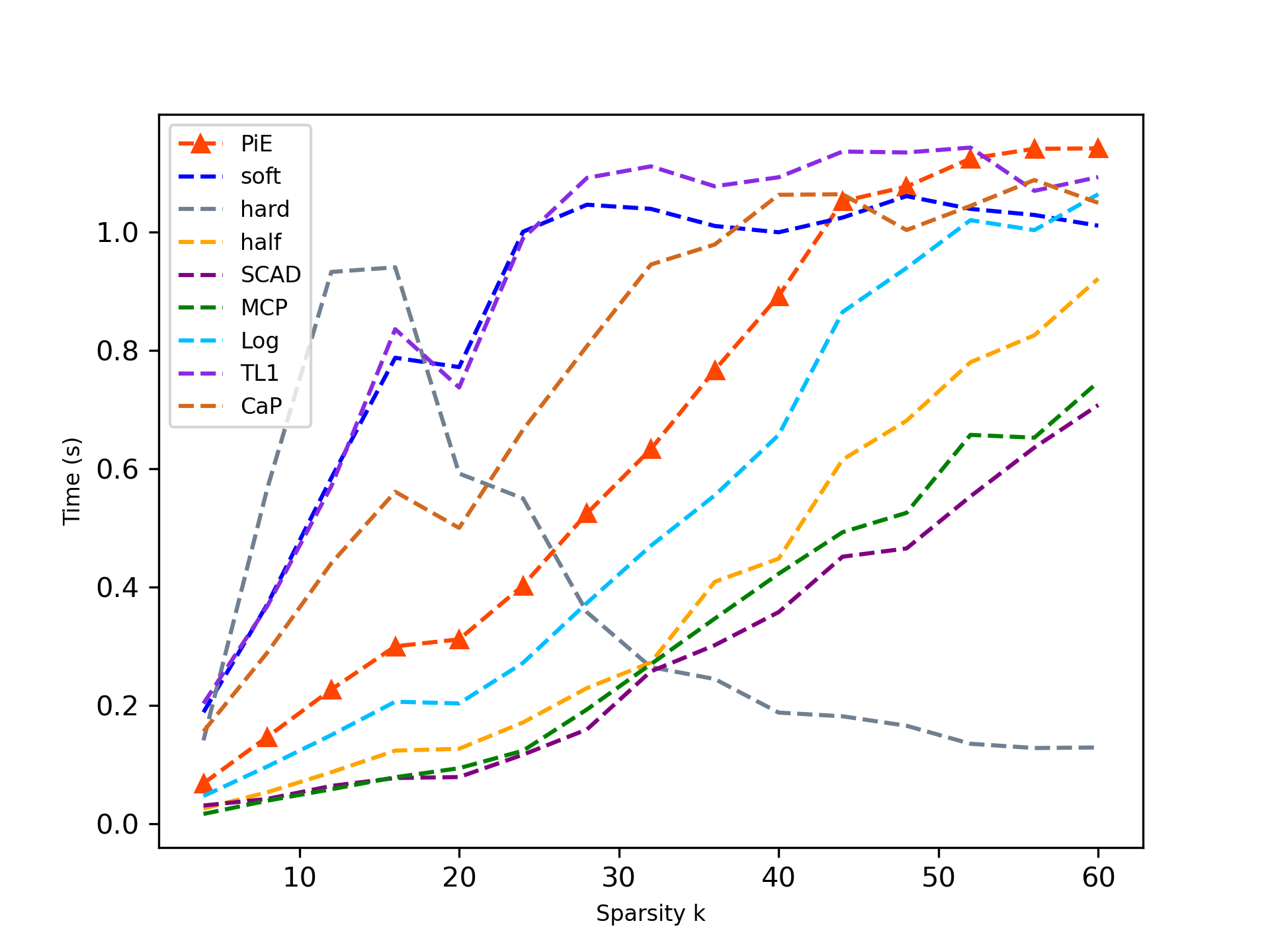}
         \end{minipage}}
\hfil
\subfloat[]{\begin{minipage}[b]{0.3\textwidth}
\includegraphics[scale=0.36]{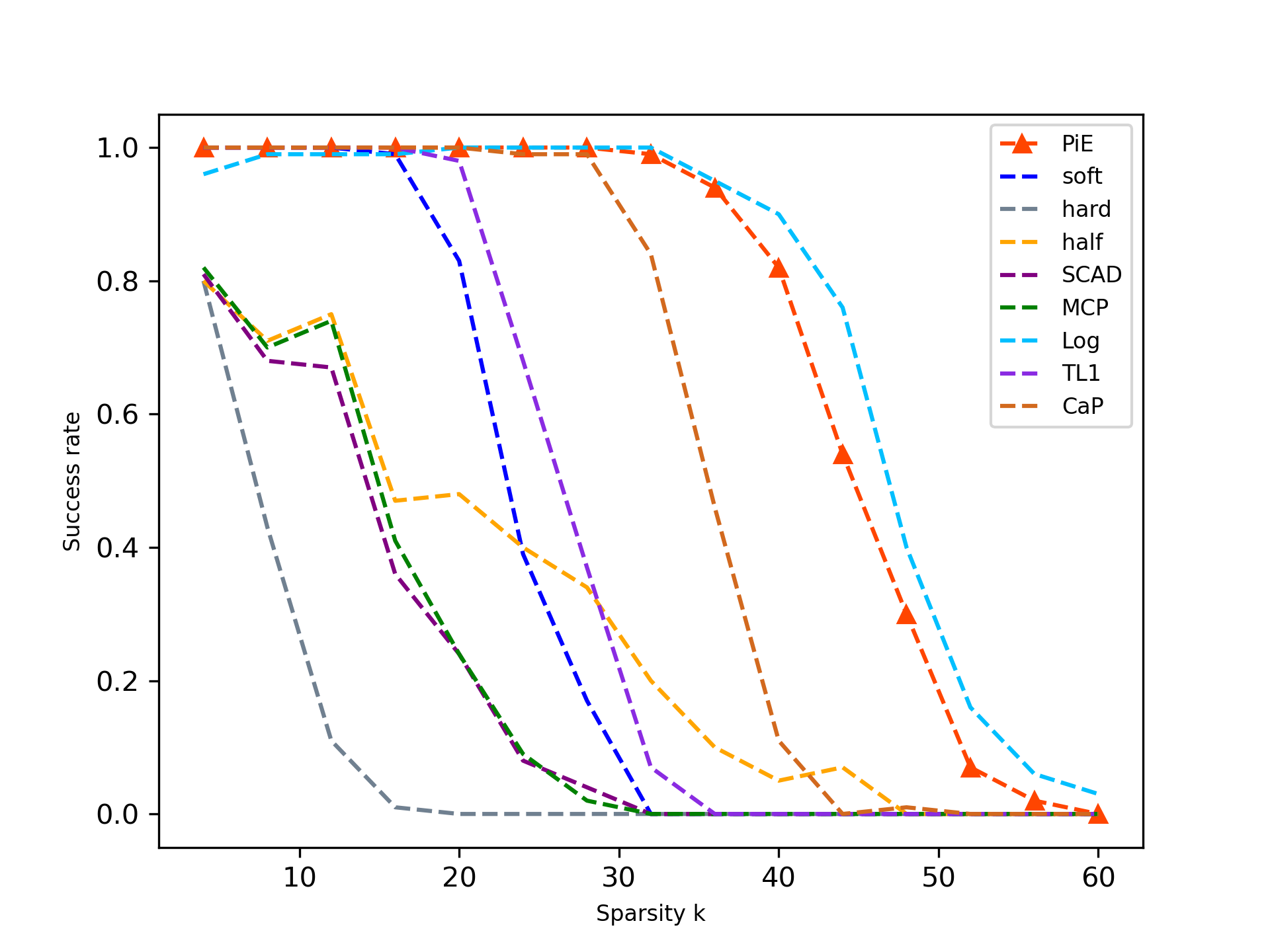}\\
         \includegraphics[scale=0.36]{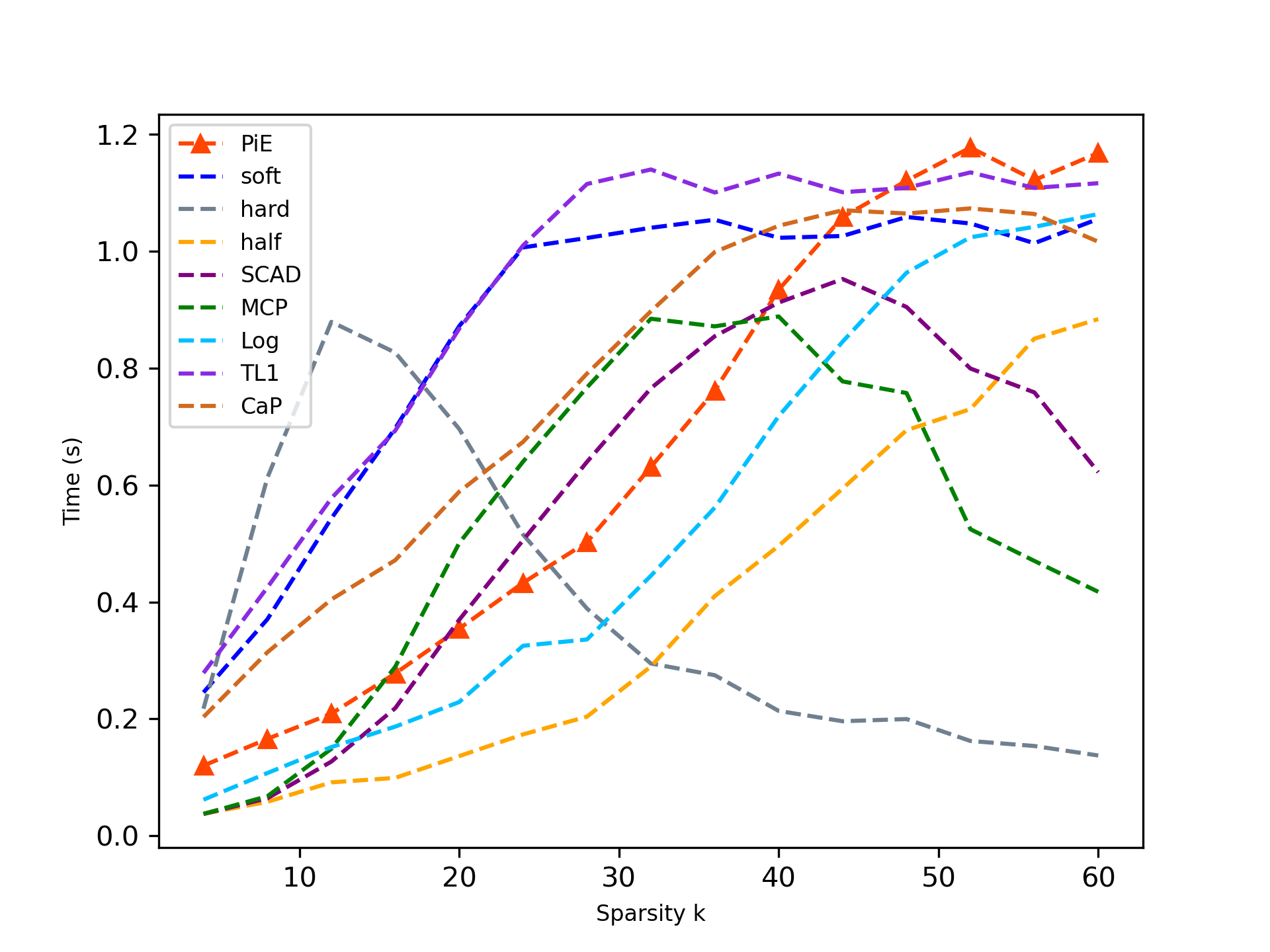}
         \end{minipage}}
\hfil
\subfloat[]{\begin{minipage}[b]{0.3\textwidth}
\includegraphics[scale=0.36]{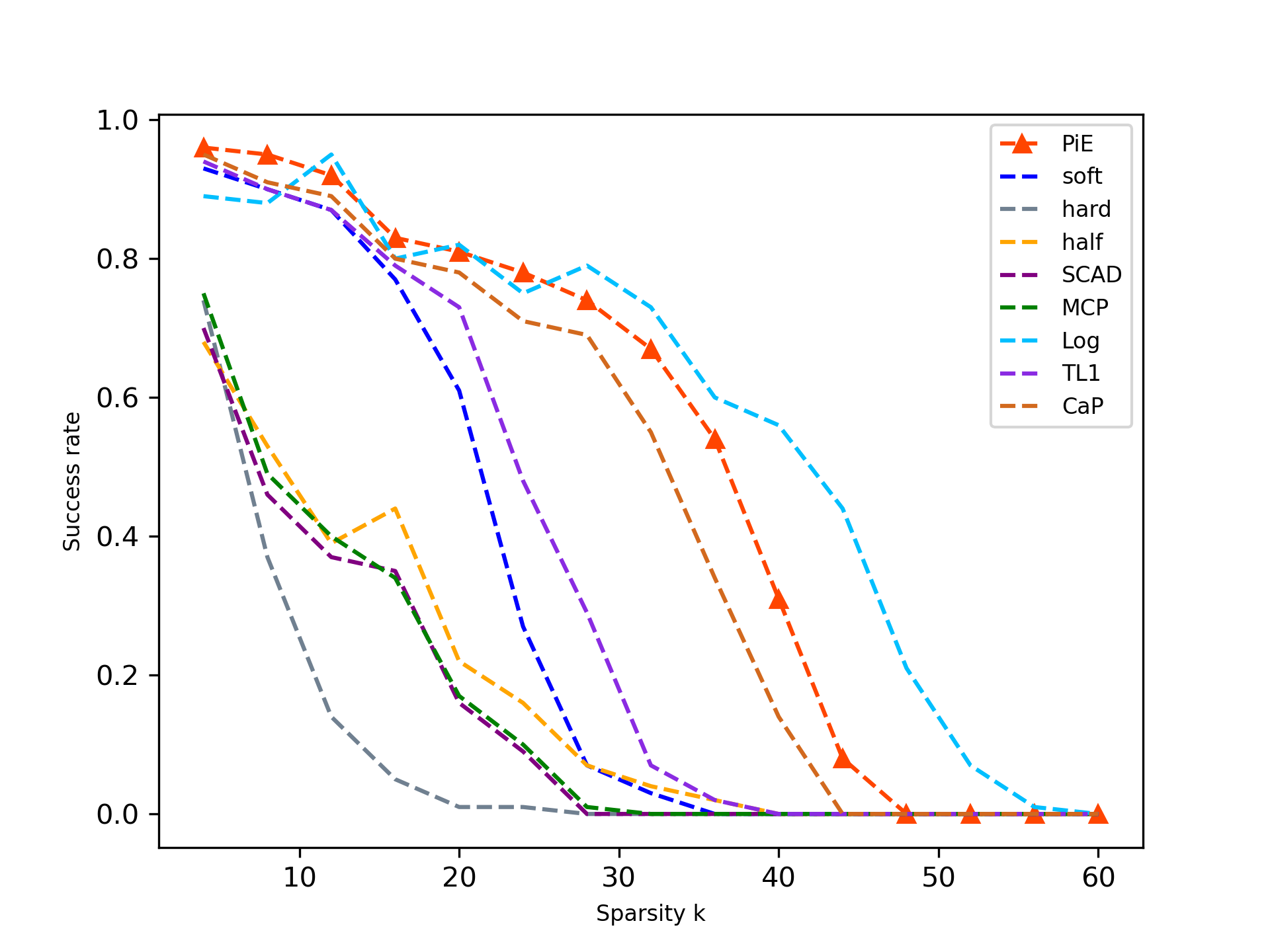}\\
         \includegraphics[scale=0.36]{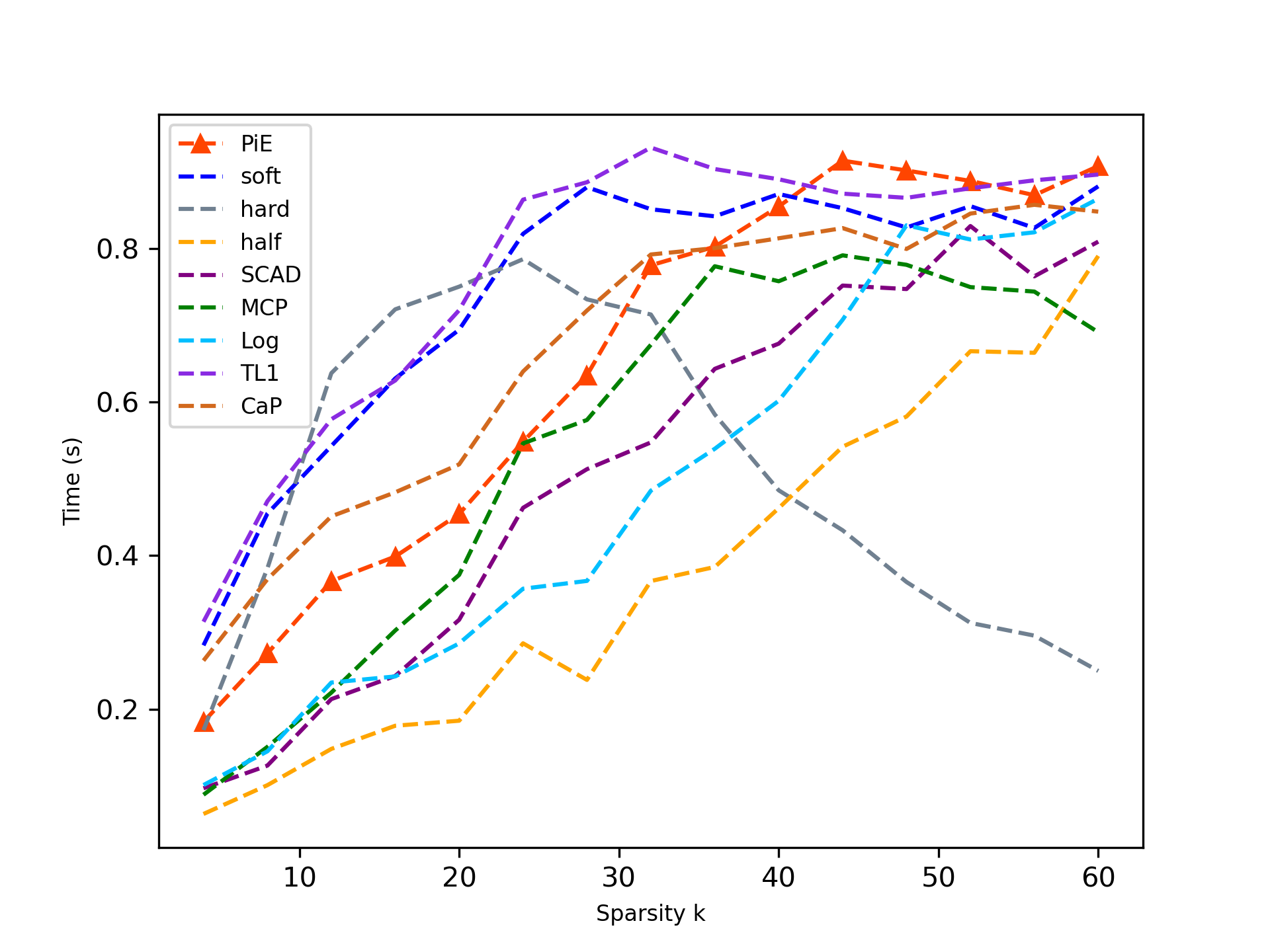}
         \end{minipage}}         
\caption{Success rate (the first row) and convergence time (the second row) of ISTA with a fixed step size $\mu = 0.5\mu_{max}$. (a) Gaussian matrix; (b) DCT matrix with $F=3$; (c) DCT matrix with $F=10$.}
\label{Fig10}
\end{figure*}

\begin{figure*}[htbp]
\centering
\subfloat[]{\begin{minipage}[b]{0.3\textwidth}
\includegraphics[scale=0.38]{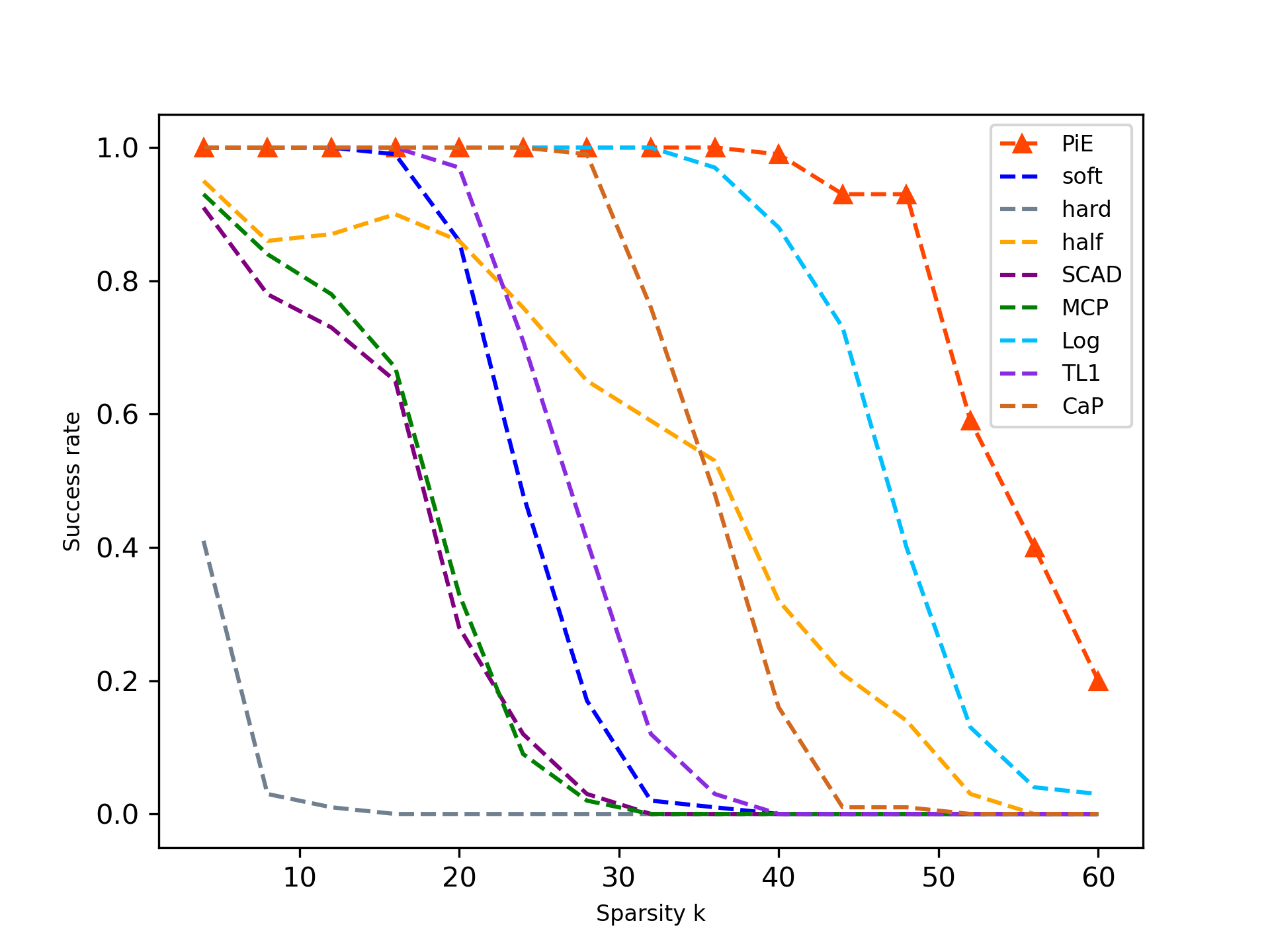}\\
         \includegraphics[scale=0.38]{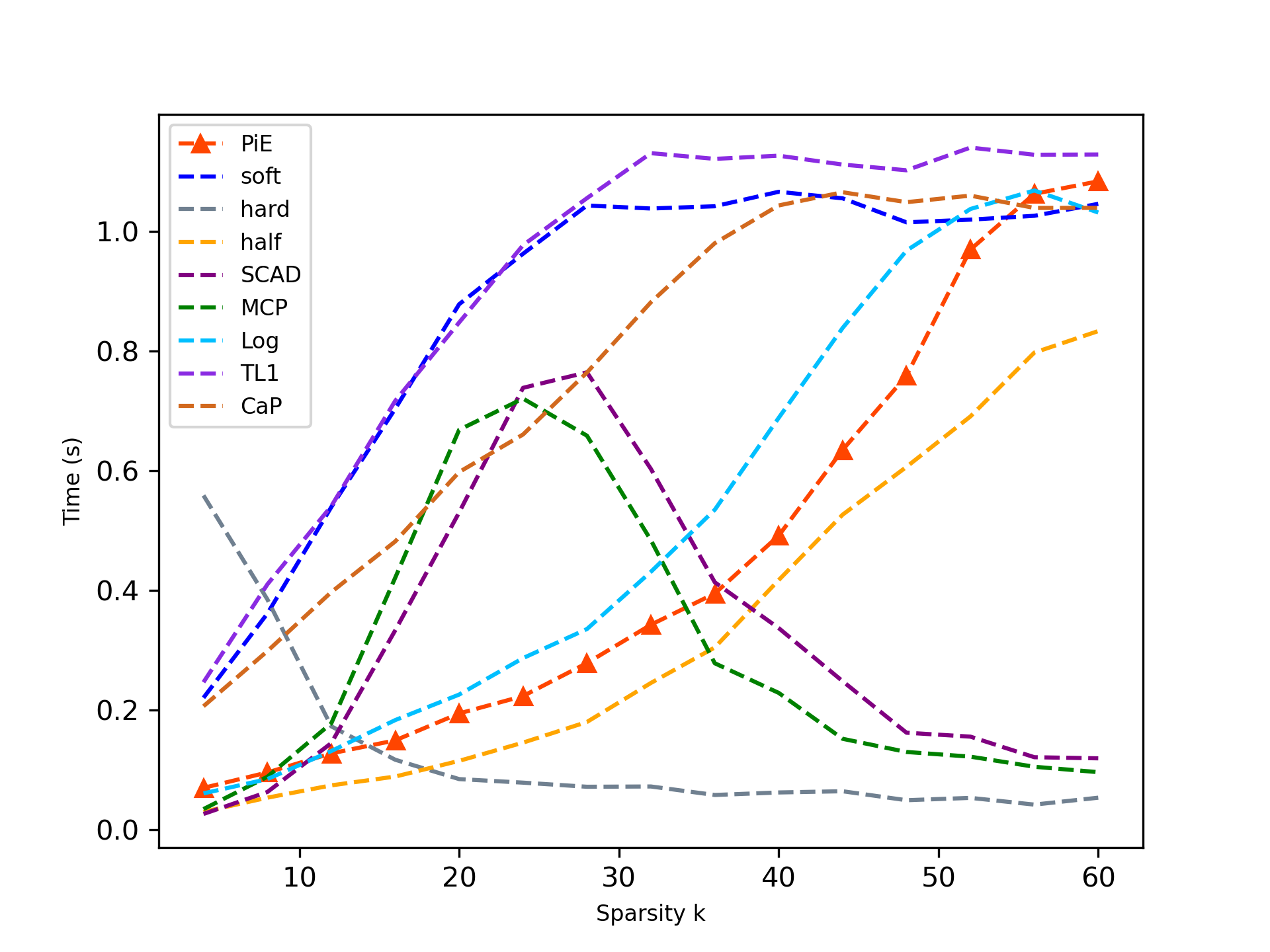}
         \end{minipage}}
\hfil
\subfloat[]{\begin{minipage}[b]{0.3\textwidth}
\includegraphics[scale=0.38]{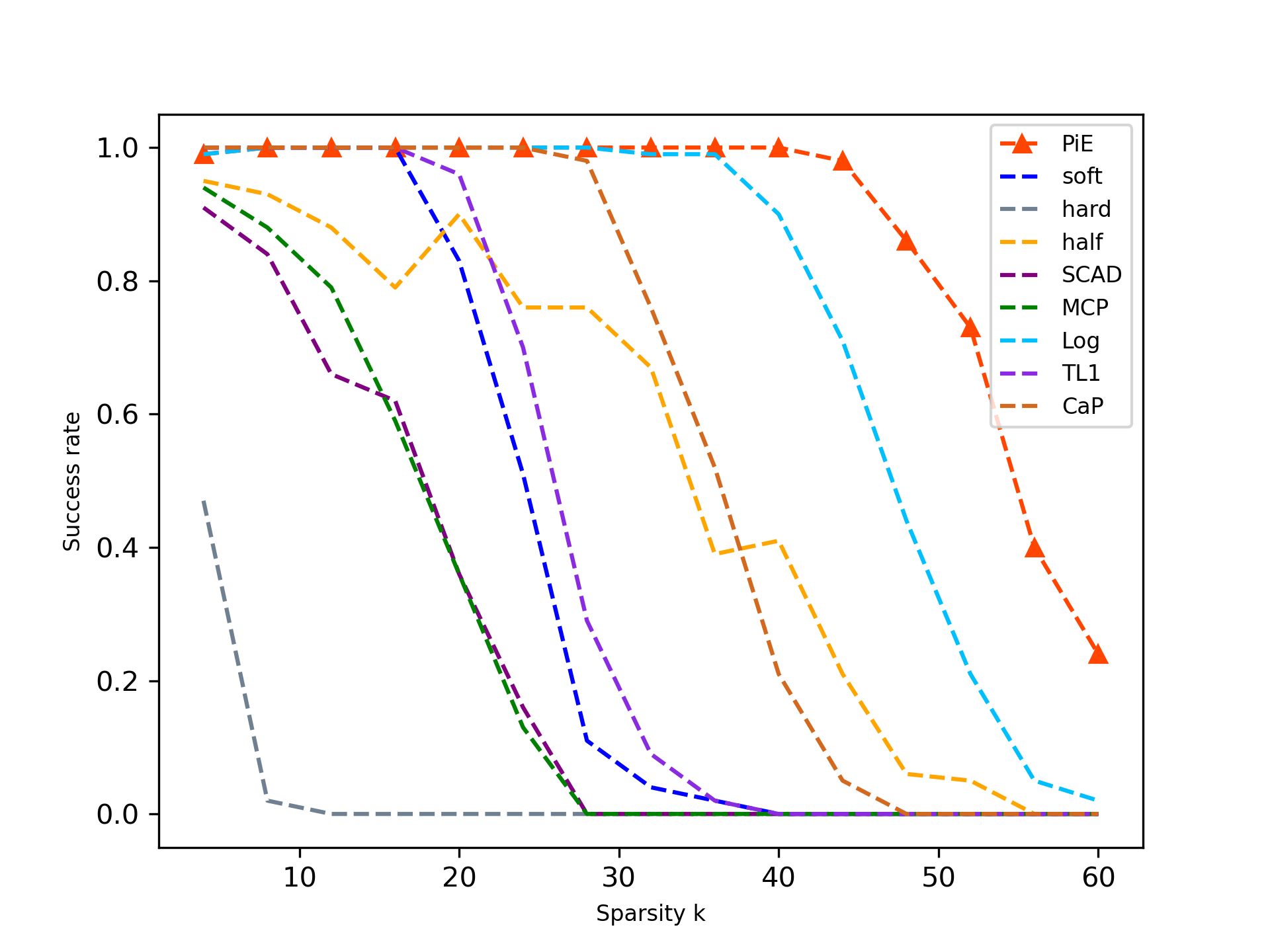}\\
         \includegraphics[scale=0.38]{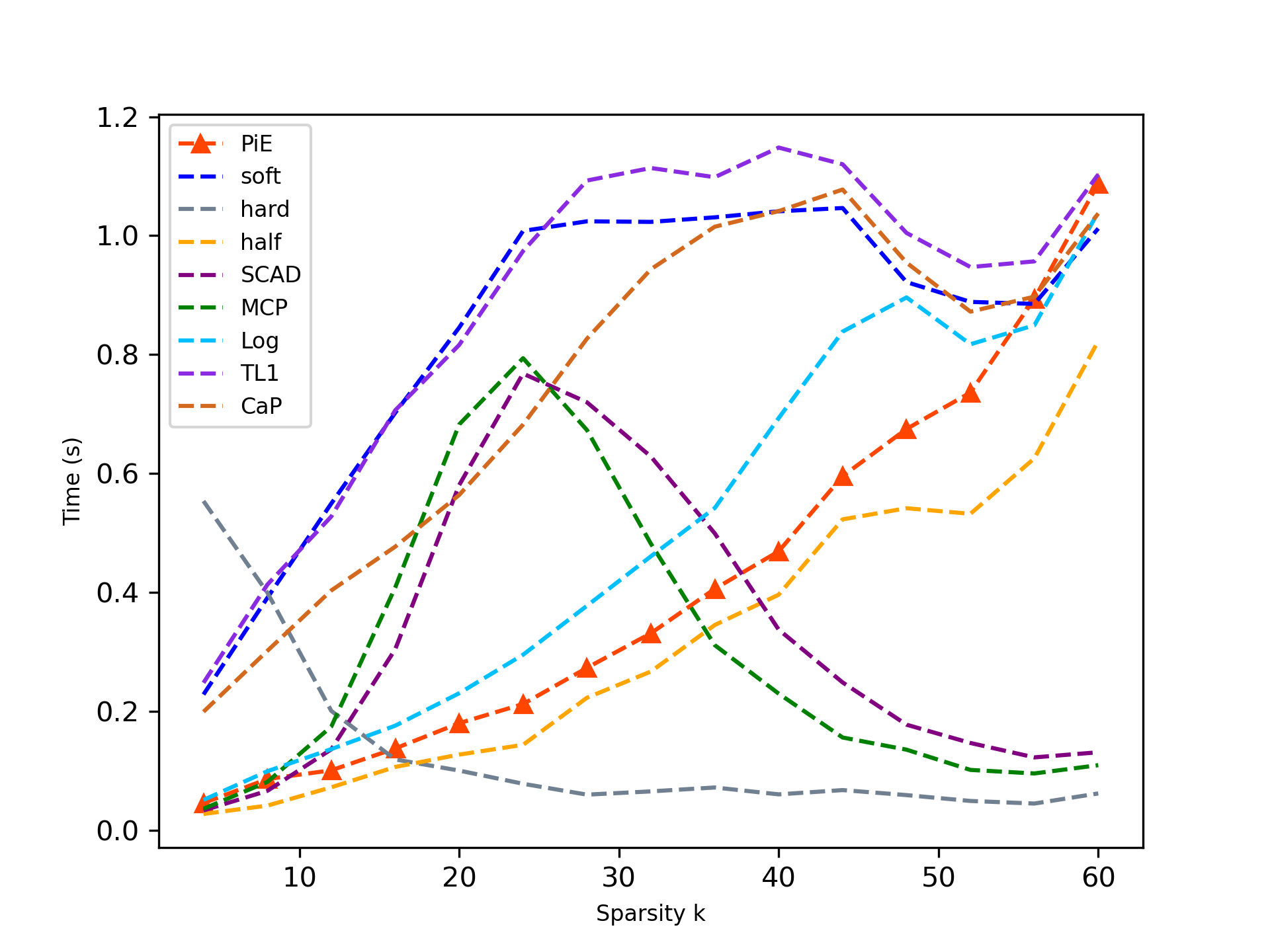}
         \end{minipage}}
\hfil
\subfloat[]{\begin{minipage}[b]{0.3\textwidth}
\includegraphics[scale=0.38]{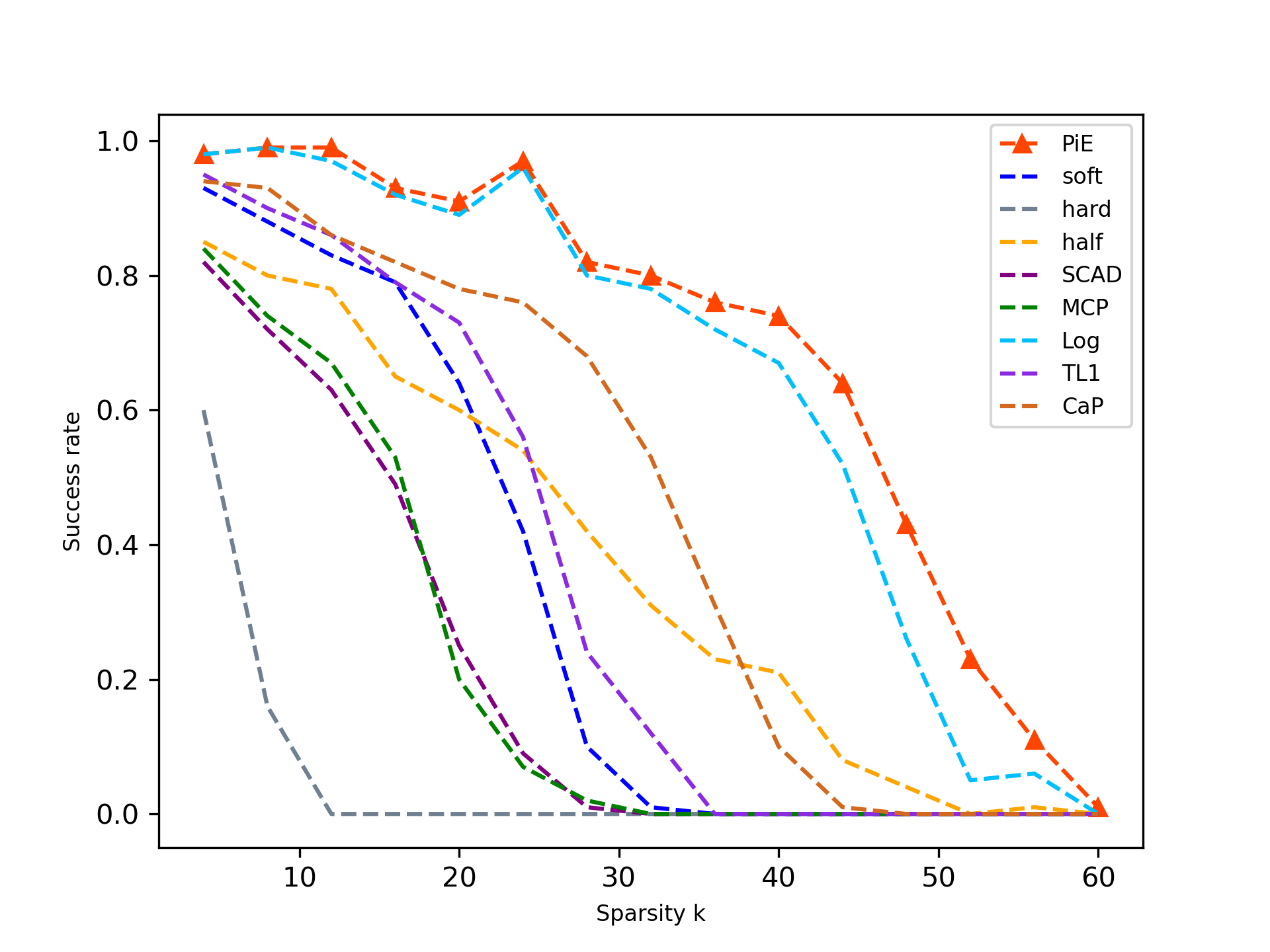}\\
         \includegraphics[scale=0.38]{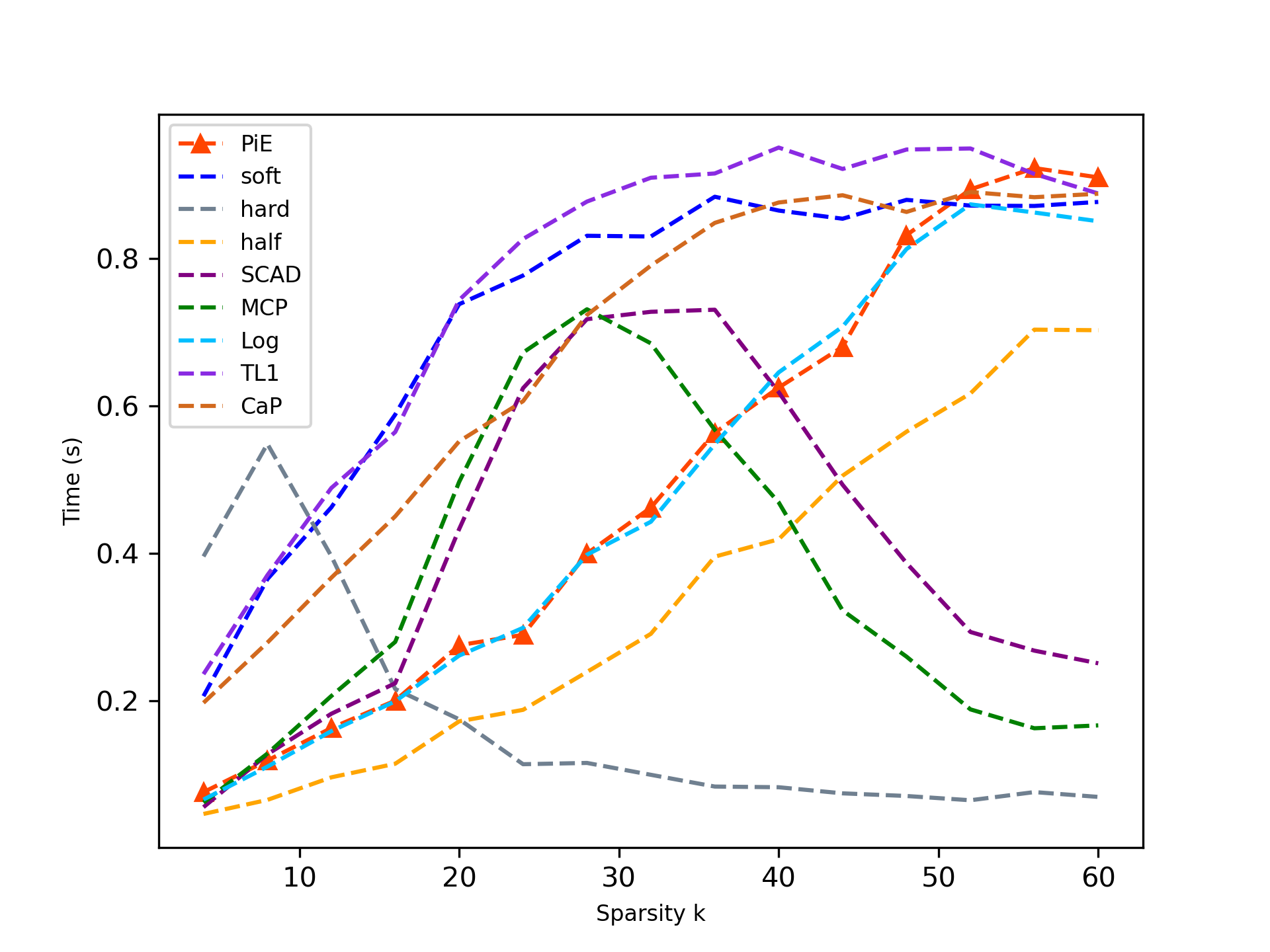}
         \end{minipage}}         
\caption{Success rate (the first row) and convergence time (the second row) of ISTA with a fixed step size $\mu = 0.99\mu_{max}$. (a) Gaussian matrix; (b) DCT matrix with $F=3$; (c) DCT matrix with $F=10$.}
\label{Fig11}
\end{figure*}

To conclude this subsection, we should make some comments.
The comparative study of nonconvex penalties via ISTA in compressed sensing has been given in \cite{Xu2023}. The numerical performance in Fig. \ref{Fig10} and Fig. \ref{Fig11} is significantly better than the existing numerical results in \cite[Figure 9 (a) and (c)]{Xu2023} whenever the related parameters are chosen properly, such as, the number of the maximum iteration and the step size $\mu$.  Notice that an additionally debiased trick in \cite[Figure 9 (c)]{Xu2023} was done by the least squares on the support of the reconstructed signal via ISTA.
The number of the maximum iteration in \cite{Xu2023} is $500$, while it is $3000$ in this paper. 
The step size $\mu=1$ was fixed in \cite{Xu2023}, which can not guarantee the convergence of ISTA as $\mu>\mu_{\max}$. It was reported in \cite{Zhang2018} that the set of matrices satisfying the generalized null space property of concave penalties can be larger than that of the null space property. Thus, the exact recovery by Half, TL1, PiE, MCP is potentially better than Soft, CaP, SCAD in this regime \cite[Remark 2.2]{Zhang2018}.

\section{Conclusion}\label{Section:Conclusion}

In this paper, we have derived a correct and simple expression of the proximal operator for the PiE function. Theorems \ref{ProximalCase1} and \ref{ProxTheoCase2Refined} have showed that the proximal operator for the PiE penalty has closed form, while Theorem \ref{Coratau} as an alternative of Theorem \ref{ProxTheoCase2Refined} is better suitable for use in numerical calculations.
Given a sparse signal such that amplitudes of its nonzero coefficients are uniformly distributed on $[-5,5]$ and the sensing matrix $A\in\bR^{128\times256}$, 
numerical experiments in compressed sensing in Section \ref{Section:CS} suggest
 the  three parameters in Algorithm \ref{AlgoISTA} should be given $\lambda=\frac{1}{100}, \sigma=\frac{1}{2}$, and $\mu=0.99\mu_{\rm max}$. These numerical experiments also
have demonstrated the great advantage of ISTA with PiE compared with other penalties (soft, hard, half, SCAD, MCP, Log, TL1, and CaP) whenever the large step size was adopted.
We hope to apply PiE penalty function to other sparse-promoting optimization problems such as support vector machines, neural networks, low-rank matrix recovery, Fourier transform, and wavelet transform, etc in future work.

\section*{Acknowledgments}
The first author was supported by Guangdong Basic and Applied Basic Research Foundation (2023A1515012891, 2020A1515010408). The third author was supported in part by the National Natural Science Foundation of China (11901595,11971490), by the Guangzhou Science and Technology Plan Project (202201010677), and by the Opening Project of Guangdong Province Key Laboratory of Computational Science at the Sun Yat-sen University (2021018).

\bibliographystyle{IEEEtran}
\bibliography{Proximalexp}




\vfill

\end{document}